\documentclass[12pt]{article}

\usepackage{graphicx}
\usepackage{makeidx}
\usepackage{mathtools,cite,color,xcolor}
\usepackage{fancyhdr,bm}
\usepackage{physics}

\usepackage{amsthm}
\usepackage{tabularx}
\usepackage{mathtools}
\usepackage{txfonts,graphicx,epstopdf,hyperref}
\usepackage[toc,page]{appendix}
\usepackage{caption}
\usepackage{subcaption} 
\usepackage[libertine]{newtxmath}
\usepackage[scr=rsfso]{mathalfa}
\usepackage[mathlines]{lineno}

\theoremstyle{plain}\newtheorem{theorem}{Theorem}[section]
\newtheorem{proposition}{Proposition}[section]
\newtheorem{lemma}{Lemma}[section]

\theoremstyle{definition}

\theoremstyle{remark}
\newtheorem{remark}{Remark}[section]
\theoremstyle{plain}

\numberwithin{equation}{section}

\newcommand{\diag}{\mbox{diag}}

\newcommand{\vecthree} [3]{\left(\begin{matrix}  #1 \\ #2 \\ #3  \end{matrix} \right)}

\begin{document}
\title{
Existence and Asymptotic Stability of Traveling Waves of Two-species Lotka-Volterra Competition-Diffusion Systems via Geometric Singular Perturbations
}
\author{
Chueh-Hsin Chang\footnote{Department of Mathematics, National Chung Cheng University, Min-Hsiung, Chia-Yi 62102, Taiwan.  E-mail: chuehhsin@ccu.edu.tw.
Partially supported by the NSTC
 of Taiwan.}\ \ and
Tzi-Sheng Yang\footnote{Department of Smart Computing and Applied Mathematics, Tunghai
University, Taichung City 407224, Taiwan.  E-mail: tsyang@thu.edu.tw.
Partially supported by the NSTC
 of Taiwan.}
}
\date{}
\maketitle

\begin{abstract}
We consider the two-species Lotka-Volterra competition-diffusion system where one species has a small diffusion rate relative to the other species and has a small competition coefficient. By the geometric singular perturbation theory, we prove the existence of the wavefront connecting the coexistence state to the trivial state, with traveling speed greater than the minimal speed. For such wavefronts, we show that the profiles of both species are monotone. Finally,  we use the geometric singular perturbation theory to estimate the associated Evans function, and give the related stability results.
\medskip

\noindent {\bf Keywords}: Lotka-Volterra competition-diffusion system,  fundamental solution, traveling wave, Evans function, geometric singular perturbation theory.\medskip

\noindent {\bf AMS Subjective Classifications (2000).} 35K40, 35K57, 35B35, 35P15, 35Q92.

\textbf{\ }
\end{abstract}

\newpage
\section{Introduction}
\newcounter{newequation}
The dimensional form of the Lotka-Volterra competition-diffusion system is given by
\begin{subequations}\label{eq:LV-dimensional}
\begin{align}
u_{t}&=d_1 u_{xx}+r_1 u\left( 1-\frac{u}{K_1}-a\frac{v}{K_1}\right) \label{eq:LV-dimensional-u} \\
v_{t}&=d_2 v_{xx}+r_2v \left( 1-\frac{v}{K_2}-b\frac{u}{K_2}\right),\label{eq:LV-dimensional-v}
\end{align}
\end{subequations}
where $d_i$ $[L^2/T]$ are diffusion rates; $r_i$ $[1/T]$ are inherent per-capita growth rates; $K_i$ $[{\rm density}]$ are carrying capacities; and $a$, $b$ are dimensionless competition coefficients. 
By the dimensionless rescaling (see Chapter 3 in Murray \cite{Murray})
\[ u=K_1 U,\ v=K_2 V,\ t=\frac{\tau}{r_1},\ x=\sqrt{\frac{d_1}{r_1}}\xi,\]
and abusing the notations $u$ for $U$, $v$ for $V$, $t$ for $\tau$, and $x$ for $\xi$, \eqref{eq:LV-dimensional} has the dimensionless form
\begin{subequations}\label{eq:LV-0}
\begin{align}
u_{t}&=u_{xx}+u( 1-u-r v) \label{eq:LV-0-u} \\
v_{t}&=dv_{xx}+\ell v( 1-s  u-v) , \label{eq:LV-0-v}
\end{align}
\end{subequations}
where 
\[ d=\frac{d_2}{d_1},\ \ell=\frac{r_2}{r_1},\ r=a\frac{K_2}{K_1},\ s=b\frac{K_1}{K_2}.\]
Hence, $d$ means the relative diffusion rate of $v$-species (with respect to that of $u$-species), $\ell$ means the relative inherent per-capita growth rate (with respect to that of $u$-species). 
The system \eqref{eq:LV-0} has four constant equilibria (when $rs\ne 1$),
\begin{equation}
e_1 :=(0,0),\ e_2  :=(1,0),\ e_3 :=(0,1),\ e_4 :=(u^*,v^*)=\left(\frac{%
1-r}{1-rs },\frac{1-s  }{1-rs  }\right),
\end{equation}
whose local stability is listed  below:
\begin{equation}
\begin{tabular}{ll}
$e_1 $ is unstable; &$e_2  $ is stable if and only if $s >1$; \\
$e_3 $ is stable if and only if $
r >1$; &$e_4 $ is stable if and only if $r ,s  <1$.
\end{tabular}
\label{stability of e1 to e4}
\end{equation}
It was known that the coexistence equilibrium $e_4$ is  positive ($u^*,v^*>0$)  if and only if $r ,s  <1$ or $r ,s >1$, and  that $u^*, v^*\in (0,1)$ for either case. \\

The traveling wave solutions for \eqref{eq:LV-0} have been studied by a lot of work, see, e.g., Conley and Gardner \cite{ConleyGardner1984}, Hung \cite{HLC}, Hosono \cite{Hosono2003}, Kan-on \cite{Kan-on1995,Kan-on1997}, Kan-on and Fang \cite{Kan-onFang}, Kan-on and Yanagida \cite{Kan-onYanagida},  Rodrigo and Mimura \cite{RodrigoMimura2000}, Tang and Fife \cite{TangFife}, and their related reference. The $(e_i$,$e_j)$-wave means a {\em nonnegative}  traveling wavefront connecting $e_i$ and $e_j$. Among those works, the $(e_4$,$e_1)$-waves are less studied than the other $(e_i$,$e_j)$-waves. The existence of  the $(e_4$,$e_1)$-waves was firstly proved in Tang and Fife \cite{TangFife}. Then, the $(e_4$,$e_1)$-waves with exact forms were given in Hung \cite{HLC}, and the existence of non-monotone $(e_4$,$e_1)$-waves was proved in Chen et al. \cite{Chen-Hsiao-Wang}. The stability of $(e_4$,$e_1)$-waves were not studied in Tang and Fife \cite{TangFife}, Hung \cite{HLC}, and Chen et al. \cite{Chen-Hsiao-Wang}. As the two-species system \eqref{eq:LV-0} can be transformed into a monotone system by $(u,v)\to (1-u,v)$, if the transformed $(e_i$,$e_j)$-wave has monotone profiles, its stability can be concluded by the standard theory of monotone systems (see, e.g., Volpert et al. \cite{Vvv}). However, in Tang and Fife \cite{TangFife} the  transformed $(e_4$,$e_1)$-waves did not address that the profiles of their transformed $(e_4$,$e_1)$-waves are monotone or not.  On the other hand, the profiles of the transformed $(e_4$,$e_1)$-waves obtained in Hung \cite{HLC} are surely not monotone. Therefore, the stability of the $(e_4$,$e_1)$-waves in Tang and Fife\cite{TangFife} and Hung \cite{HLC} cannot be concluded by the standard theory of monotone systems.\\

As the point spectrum of traveling waves coincides with the zeros of the associated Evans function, the Evans function approach (see, e.g., Alexander et al. \cite{AGJ}, Evans \cite{Evans3,Evans4}, Kapitula and Promislow \cite{KapitulaPro}) to study the stability of traveling waves was employed in various ways. For example, the orientation index for the proof of stability or instability of traveling waves in Gardner and Jones \cite{GarJones}, Kapitula and Promislow \cite{KapitulaPro},  Pego and Weinstein \cite{PW}, the stability of waves in the gluing bifurcation problem in Nii \cite{Nii}. In general, it is not easy to estimate the zeros of the Evans function without further restrictions on the system. Due to these reasons, in this article, we consider \eqref{eq:LV} where the parameter $d=\varepsilon$ is small and the parameter $s=O(\varepsilon)$. Its biological meaning is that the $v$-species has a small diffusion rate relative to the $u$-species and that the $u$-species has very weak competition. In such a case, it is reasonable that the $v$-species has a chance to survive, and eventually it is possible for the coexistence state $e_4$ to spread in the whole real line. Our system is then written as
\begin{subequations}\label{eq:LV}
\begin{align}
u_{t}&=u_{xx}+u( 1-u-r v) \label{eq:LV-u} \\
v_{t}&=\varepsilon v_{xx}+\ell v( 1-s  u-v) . \label{eq:LV-v}
\end{align}
\end{subequations}
We study the existence of the $(e_4$,$e_1)$-waves of \eqref{eq:LV} as well as their asymptotic stability. \\

In terms of the coordinate $(z,t)$ with  $z= x-c t$ where the constant $c>0$ means the wave speed, system \eqref{eq:LV} becomes
\begin{subequations}\label{eq:LV-zt}
\begin{align}
u_{t}&=u_{zz}+cu_z+u( 1-u-r v) \label{eq:LV-zt-u} \\
v_{t}&=\varepsilon v_{zz}+cv_z+\ell v( 1-s  u-v). \label{eq:LV-zt-v}
\end{align}
\end{subequations}
An $(e_4$,$e_1)$-wave is the nonnegative stationary solution $(u(z),v(z))$ of \eqref{eq:LV-zt} satisfying the profile equation
 \begin{subequations}\label{eq:LV-profile}
\begin{align}
0&=u_{zz}+cu_z+u( 1-u-r v) \label{eq:LV-profile-u} \\
0&=\varepsilon v_{zz}+cv_z+\ell v( 1-s  u-v) , \label{eq:LV-profile-v}
\end{align}
\end{subequations}
and the boundary conditions
\begin{equation}\label{eq:BC}
\lim\limits_{z\to -\infty}(u(z),v(z))=e_4, \ \mbox{ and }\ \lim\limits_{z\to +\infty}(u(z),v(z))=e_1.
\end{equation}
As the eigenvalues of the linearization of  \eqref{eq:LV-profile} at $e_1$ are real only when $c\ge 2$, system \eqref{eq:LV} can only possess $(e_4$,$e_1)$-waves with wave speed $c\ge 2$. We prove the existence of $(e_4$,$e_1)$-waves for $\varepsilon\approx 0$, by using the geometric singular perturbation theory (see Theorem \ref{thm:exist}). Indeed, we find a singular heteroclinic orbit of \eqref{eq:LV-profile} and \eqref{eq:BC} (with $\varepsilon=0$) 
which entirely lies in the 3-dimensional normally hyperbolic singular slow manifold. As $\varepsilon\approx 0$, by Fenichel's theory we show the existence of a heteroclinic orbit of \eqref{eq:LV-profile} and \eqref{eq:BC}
, which entirely lies within the perturbed slow manifold.
Furthermore, we show that for $\varepsilon\approx 0$ both $u$ and $v$ components of the heteroclinic orbit are monotone decreasing  (see Theorem \ref{thm:monotone}). \\

To the authors' knowledge, the stability of $(e_4$,$e_1)$-waves seems to have never been considered before.   We investigate the stability of $(e_4$,$e_1)$-waves of  \eqref{eq:LV} by the spectral analysis. Suppose $L$ is the linearized operator for \eqref{eq:LV} around a $(e_4,e_1)$-wave. According to Alexander et al. \cite{AGJ}, Kapitula and Promislow \cite{KapitulaPro}, Volpert et al. \cite{Vvv}, if the spectrum of $L$ have negative real parts (except that $0$ is a simple eigenvalue, resp.), then the $(e_4,e_1)$-wave is asymptotically stable without shift (with shift, resp.). If on the contrary, some elements of the spectrum of $L$ have positive real parts, then the $(e_4,e_1)$-wave is unstable. Consider the problem $L\varphi =\lambda \varphi$ where $\varphi$ belongs to the space $C_0$, defined by
\begin{equation}\label{def:C0}
C_{0}:=\{\varphi :\mathbb{R}\to \mathbb{C}^2|\ \varphi
\mbox{ is
bounded and uniformly continuous, and }\varphi (z)\to 0\mbox { as }%
|z|\to \infty \}
\end{equation}%
with the supremum norm $\| \varphi \| =\sup_{z\in \mathbb{R}}|\varphi
(z)|$. As a $(e_4,e_1)$-wave is of monostable type, some elements of its essential spectrum must have positive real parts (see Henry \cite{Henry} and Volpert et al. \cite{Vvv}). However, if  $L$ is restricted to act in the weighted space $C_{\omega }$, defined by
\begin{equation}\label{def:C-omega}
C_{\omega }:=\left\{ \varphi \in C_{0}\ | (1+e^{\delta z})\varphi(z) \in
C_{0}\right\}, 
\end{equation}%
for some constant $\delta>0$, then the essential spectrum of $L$ is possible to move to the left half of $\mathbb C$. 
We show  (see Theorem \ref{essential spectrum weighted}) that if  $\ell >2$ and $4<c^2\leq\ell ^2(\ell -1)^{-1}$,  the essential spectrum of $L$ acting $C_{\omega }$ always intersect the right half of $\mathbb C$, 
 whereas if $\ell \le 2$, or $\ell > 2$ with $c^2> \ell ^2(\ell -1)^{-1}$, the essential spectrum of $L$ acting $C_{\omega }$ have negative real parts.  Therefore, for the case  $\ell \le 2$, or $\ell> 2$ with $c^2> \ell ^2(\ell -1)^{-1}$, the stability of the $(e_4,e_1)$-wave in the space $C_{\omega }$ is  determined by  the point spectrum of $L$ (acting $C_{\omega }$). The point spectrum is exactly the set of zeros of the Evans function,  which is the determinant of the fundamental solutions for the eigenvalue problem $L\varphi=\lambda \varphi$ satisfying the zero boundary conditions on $z=+\infty$ and  $z=-\infty$, respectively. Inspired by Alexander et al. \cite{AGJ}, Doelman et al. \cite{DolGarKap,ArjGarKap2001}, Dockery and Lui \cite{DouckeyLui}, Gardner and Jones \cite{GarJones}, van Heijster et al. \cite{vanArjKap2008}, we use the geometric singular perturbation method to approximate the fundamental solutions, which come respectively from the {\em reduced problem} and the {\em layer problem}  of the eigenvalue problem. Owing to the geometric structure of the slow manifold for the reduced eigenvalue problem,
it's turned out that the number of zeros for the Evans function equals that of eigenvalues (counting multiplicities) for some  Sturm-Liouville system. As a result, we prove that the $(e_4,e_1)$-waves are asymptotically stable without shift (see the definition in Volpert et al. \cite{Vvv}) in the space $C_{\omega }$ (see Theorem \ref{thm:stability}). Concluding the discussion, the stability depends on the ranges of $\ell$ and the wave speed $c$, different from the stability of other monostable traveling waves such as those in Chang et al. \cite{CHY2020}, Sattinger \cite{Sattinger} and Volpert et al. \cite{Vvv}. \\

This article is organized as follows. In Section 2 we prove the existence of $(e_4,e_1)$-waves by the geometric singular perturbation method. In Section 3, we prove that the $(e_4,e_1)$-waves obtained in Section 2 are monotone decreasing. In Section 4 we investigate the distribution of the essential spectrum of the $(e_4,e_1)$-waves in the space $C_{\omega }$. In Section 4 we investigate the number of point spectrum with positive real parts and give the stability results of the $(e_4,e_1)$-waves in the space $C_{\omega }$. 

\section{Existence of $(e_4$,$e_1)$-wave}
Assume $r<1$ and $s=O(\varepsilon)$. Using the notation $(\cdot)'=d(\cdot)/dz$ to write \eqref{eq:LV-profile} as the first order system \ref{eq:LV-profile-slow}, \\
\begin{equation}
\left\{
\begin{aligned}\label{eq:LV-profile-slow}
u'&=w_1 \\
w_1'&=-c w_1-u( 1-u-r v)\\ 
v'& =w_2\\ 
\varepsilon w_2' &=-c w_2-\ell v( 1-s  u-v), 
\end{aligned}
\right.
\tag*{$(S_\varepsilon)$}
\end{equation}
which is referred to the {\em slow system}.  Introducing the fast variable $\xi:=z/\varepsilon$, and using $\dot{(~)}=d(~)/d\xi$, \ref{eq:LV-profile-slow} is equivalent to  the {\em fast system} \ref{eq:LV-profile-fast},
\begin{equation}
\left\{
\begin{aligned}\label{eq:LV-profile-fast}
\dot u&=\varepsilon w_1 \\
\dot w_1&=\varepsilon[-c w_1-u( 1-u-r v)]\\ 
\dot v& =\varepsilon w_2\\ 
\dot w_2 &=-c w_2-\ell v( 1-s  u-v). 
\end{aligned}
\right.
\tag*{$(F_\varepsilon)$}
\end{equation}

\noindent In the context of $(e_4$,$e_1)$-wave, we denote the trivial equilibrium and the positive equilibrium of \ref{eq:LV-profile-slow}  by
\[ e_1=(0,0,0,0),\ \   e_{4,\varepsilon}=(u^*_\varepsilon,0, v^*_\varepsilon,0),\]
where $u_\varepsilon^*$, $v_\varepsilon^*$ are dependent of $\varepsilon$, given by
 \[   u^*_\varepsilon=\frac{1-r}{1-r s}=1-r+O(\varepsilon),\ \ \ 
 v^*_\varepsilon=\frac{1-s}{1-r s}=1+O(\varepsilon).\]


\begin{lemma}\label{lem:local-anal-fast-syst}
  For $\varepsilon\approx 0$,  \ref{eq:LV-profile-slow}  can only have  $(e_4,e_1)$-waves with speed $c\ge 2$.   
\end{lemma}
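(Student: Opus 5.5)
The plan is to linearize \ref{eq:LV-profile-slow} at $e_1=(0,0,0,0)$ and show that for $0<c<2$ every orbit approaching $e_1$ as $z\to+\infty$ must oscillate in $u$. That is incompatible with a nonnegative $(e_4,e_1)$-wave unless $u\equiv 0$, and $u\equiv0$ is excluded because $u\to u^*_\varepsilon>0$ as $z\to-\infty$.

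First I compute the Jacobian at $e_1$. Since $u(1-u-rv)$ and $v(1-su-v)$ have linear parts $u$ and $v$, it is block triangular. The $(u,w_1)$ block is $\begin{pmatrix}0&1\\-1&-c\end{pmatrix}$, with eigenvalues $\mu^2+c\mu+1=0$. The $(v,w_2)$ block comes from $\varepsilon\mu^2+c\mu+\ell=0$, whose roots are $\mu=(-c\pm\sqrt{c^2-4\varepsilon\ell})/(2\varepsilon)$. For $\varepsilon\approx0$ (specifically $\varepsilon<c^2/(4\ell)$) these are real and negative: one is $\approx-\ell/c$ and the other $\approx -c/\varepsilon$. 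So the $v$-directions never obstruct, and the condition comes only from the $u$-block. For $0<c<2$ its eigenvalues $\mu=(-c\pm i\sqrt{4-c^2})/2$ are complex with negative real part $-c/2$, which is less negative than $-\ell/c$ only in some parameter ranges. So I will not argue via "slowest decay"; I will instead use the exact invariant structure.

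The key step is that the $u$-equation is scalar: $u''+cu'+u(1-u-rv)=0$. Write it as $u''+cu'+q(z)u=0$ with $q(z)=1-u(z)-rv(z)\to1$ as $z\to+\infty$. If $u\not\equiv0$ and $u(z)\to0$, the standard comparison/asymptotic argument gives the following. Either use the Sturm comparison theorem with $y''+cy'+\tilde q y=0$, $\tilde q\in(c^2/4,1)$ constant, whose solutions oscillate with zeros spaced $2\pi/\sqrt{4\tilde q-c^2}$. Or apply Levinson/Hartman–Wintner asymptotics: $(u,u')$ then behaves like a nontrivial combination of $e^{\mu z}$ with the complex $\mu$ above, so $u$ changes sign infinitely often. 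Either way this contradicts $u\ge0$. The Sturm route is cleaner. Setting $u=e^{-cz/2}y$ gives $y''+(q-c^2/4)y=0$ with $q-c^2/4>\tilde q-c^2/4>0$ for $z$ large, so $y$ (hence $u$) has infinitely many zeros. A nonnegative $u$ with an interior zero $z_0$ has $u(z_0)=u'(z_0)=0$, and uniqueness then gives $u\equiv0$, which contradicts the boundary condition at $-\infty$.

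The case $c\le0$ should be handled too, though the paper takes $c>0$. If $c\le 0$ the same transform gives $y''+(q-c^2/4)y=0$, oscillatory whenever $c^2<4$. For $c\le-2$ the $u$-eigenvalues are positive, so no nontrivial orbit tends to $e_1$ in the $u$-directions, except along $u\equiv0$. I expect the main obstacle to be making the oscillation argument rigorous while $q$ is only asymptotically constant; the Liouville transform plus Sturm comparison handles it. The smallness of $\varepsilon$ is used only to keep the $v$-block eigenvalues real, and this confirms the restriction $c\ge2$ stated in Lemma~\ref{lem:local-anal-fast-syst}.
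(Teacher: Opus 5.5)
Your proof is correct, but it takes a different route from the paper's. The paper reads off the eigenvalues of $J_{1,\varepsilon}$: a complex pair in the $(u,w_1)$ block when $c<2$, and two negative real eigenvalues in the $(v,w_2)$ block. It then simply asserts that nonnegativity forbids complex eigenvalues at $e_1$. In a four-dimensional phase space that step is not automatic. The complex pair is only two of four stable eigenvalues, so one must still rule out an orbit reaching $e_1$ along the real $v$-eigendirections, which are the strong directions when $\ell/c>c/2$, as you note.

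Your argument supplies exactly this missing justification. You read the $u$-equation as the scalar linear equation $u''+cu'+q(z)u=0$ with $q=1-u-rv\to 1$. The substitution $u=e^{-cz/2}y$ and Sturm comparison then force every nontrivial solution to have zeros. At a zero $z_0$ of a nonnegative $u$ one has $u(z_0)=u'(z_0)=0$, so uniqueness gives $u\equiv 0$, contradicting $u\to u^*_\varepsilon>0$ as $z\to-\infty$.

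The trade-off is that the paper's version is a one-line heuristic, while yours is rigorous. Yours also never touches the $v$-block and works for every $\varepsilon>0$. Hence your closing remark that smallness of $\varepsilon$ is needed to keep the $v$-eigenvalues real is superfluous. For $c\le -2$, which is outside the paper's standing assumption $c>0$, there is a cleaner statement than your phrasing about the $u$-directions: for any $c<0$ all four eigenvalues of $J_{1,\varepsilon}$ have positive real part. So $e_1$ is a repeller, and no nonconstant orbit tends to it as $z\to+\infty$.
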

\begin{proof}
The linearization of \ref{eq:LV-profile-slow} is given by
\begin{align}
 J_{\varepsilon}:=\varepsilon^{-1}\left(
\begin{matrix}
0 & \varepsilon &0 &0 \\  \varepsilon(2u-1+rv)  & -\varepsilon c & \varepsilon r u & 0\\
0 &  0  & 0&  \varepsilon  \\ \ell s v & 0 & \ell (2v-1+su) &-c \\
\end{matrix}
\right).
\end{align}
In particular, the linearization  of \ref{eq:LV-profile-slow}  at $e_1$ is given by
\begin{align}
J_{1,\varepsilon}:=\varepsilon^{-1}\left(
\begin{matrix}
0 & \varepsilon &0 &0 \\ -\varepsilon  & -\varepsilon c & 0 & 0 \\
0 &  0  & 0&  \varepsilon  \\ 0 & 0 & -\ell  &-c \\
\end{matrix}
\right),
\end{align}
where the eigenvalues of the left upper $2\times 2$ block are real  if and only if $c\ge 2$. The lower right block has two negative eigenvalues for $\varepsilon\approx 0$.  
  As $(e_4$,$e_1)$-waves need to be nonnegative, the complex eigenvalues at $e_1$ are not allowed, which means their speed $c\ge 2$.
\end{proof}
In the rest of this article, we assume $c>2$.
\smallskip 
Taking $\varepsilon=0$ for \ref{eq:LV-profile-slow} leads to the {\em reduced problem} \ref{eq:LV-profile-slow-0}, 
\begin{equation}
\left\{
\begin{aligned}\label{eq:LV-profile-slow-0}
u'&=w_1 \\
w_1'&=-c w_1-u( 1-u-r v)\\ 
v'& =w_2\\ 
0 &=-c w_2-\ell v( 1-v), 
\end{aligned}
\right.
\tag*{$(S_0)$}
\end{equation}
and taking $\varepsilon=0$ for \ref{eq:LV-profile-fast} leads to the {\em layer problem} \ref{eq:LV-profile-fast-0}, 
\begin{equation}
\left\{
\begin{aligned}\label{eq:LV-profile-fast-0}
\dot u&=0 \\
\dot w_1&=0\\ 
\dot v& =0\\ 
\dot w_2 &=-c w_2-\ell v( 1-v). 
\end{aligned}
\right.
\tag*{$(F_0)$}
\end{equation}
 The reduced problem \ref{eq:LV-profile-slow-0} has  a 3-dimensional critical manifold $\mathscr S $, given by
\begin{align}\label{eq:LV-profile-slow-0-mfld}
\mathscr S :=\big\{(u,w_1,v, w_2)|\ w_2=h(v):=c^{-1}  \ell v( v-1),\  u,w_1, v\in\mathbb R\big\},
\end{align}
which is also  the equilibrium of \ref{eq:LV-profile-fast-0}. The linearization of \ref{eq:LV-profile-fast-0} around $(u,w_1,v,w_2)\in\mathscr S$ is given by
\begin{equation*}
\left(
\begin{array}{ccccc}
0 & 0 & 0 & 0  \\
0 & 0 & 0 & 0  \\
0 & 0 & 0 & 0   \\
0 & 0  & \ell(2v-1)  & -c%
\end{array}%
\right), 
\end{equation*}%
which has the eigenvalues $0$ with multiplicity three (corresponding to the tangential space of $\mathscr S$), and $-c<0$. Hence  $\mathscr S$ is normally hyperbolic. 
\noindent  The flow of \ref{eq:LV-profile-slow-0} restricted on $\mathscr S $  is governed by the equation for $(u,v,w_1)$, given by
\begin{equation}\label{eq:dyna-slow-0}
\vecthree{u'}{w_1'}{v'}=\vecthree{w_1}{-c w_1-u( 1-u-r v)}{(\ell/c) v(v-1)}:=f(u,w_1,v),
\end{equation}
which has the equilibria $e'_1:=(0,0,0)$ and $e'_4:=(1-r,0,0)$. For the flow of \eqref{eq:dyna-slow-0}, let $\mathscr W^s(e'_1)$ denote the stable manifold of $e'_1$ and $\mathscr W^u(e'_4)$ denote the unstable manifold of $e'_4$.

\begin{lemma}\label{lem:dim of W^s(e1') and W^u(e4')}
Assume $c>2$.  For the flow of  \eqref{eq:dyna-slow-0},  $\mathscr W^s(e'_1)$  is 3-dimensional, and  $\mathscr W^u(e'_4)$ is 2-dimensional.
\end{lemma}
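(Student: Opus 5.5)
The plan is to apply the stable/unstable manifold theorem for hyperbolic equilibria to the three-dimensional flow \eqref{eq:dyna-slow-0}. For that flow, $\dim\mathscr W^s$ and $\dim\mathscr W^u$ equal the numbers of eigenvalues of $Df$ with negative and positive real parts, respectively. So it suffices to compute $Df$ at the two equilibria and show that both are hyperbolic, with the claimed eigenvalue counts.

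\textbf{Which equilibrium is $e'_4$.} The $v$-equation $v'=(\ell/c)v(v-1)$ vanishes at $v=0$ and $v=1$. The coexistence state satisfies $e_{4,\varepsilon}\to(1-r,0,1,0)$ as $\varepsilon\to0$, since $v^*_\varepsilon=1+O(\varepsilon)$. I will therefore read $e'_4$ as the projection $(u,w_1,v)=(1-r,0,1)$. At this point $u(1-u-rv)=(1-r)(1-(1-r)-r)=0$, so it is indeed an equilibrium. (The printed third coordinate $0$ appears to be a typo: $(1-r,0,0)$ is not an equilibrium of \eqref{eq:dyna-slow-0} when $0<r<1$.)

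\textbf{Jacobian.} In the variables $(u,w_1,v)$,
\[
Df=\left(\begin{matrix} 0&1&0\\ 2u-1+rv & -c & ru\\ 0&0&(\ell/c)(2v-1)\end{matrix}\right).
\]
This matrix is block upper triangular. Its spectrum is therefore the eigenvalue $(\ell/c)(2v-1)$ coming from the $v$-row, together with the roots of $\lambda^2+c\lambda-(2u-1+rv)=0$ from the upper-left $2\times2$ block.

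\textbf{At $e'_1=(0,0,0)$.}
\begin{itemize}
\item The $v$-eigenvalue is $-\ell/c<0$.
\item The quadratic becomes $\lambda^2+c\lambda+1=0$. For $c>2$ its roots $\big(-c\pm\sqrt{c^2-4}\big)/2$ are real, distinct and negative.
\end{itemize}
All three eigenvalues are negative, so $e'_1$ is a hyperbolic sink and $\mathscr W^s(e'_1)$ is $3$-dimensional (an open neighbourhood in $\mathbb R^3$).

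\textbf{At $e'_4=(1-r,0,1)$.}
\begin{itemize}
\item The $v$-eigenvalue is $\ell/c>0$.
\item The quadratic becomes $\lambda^2+c\lambda-(1-r)=0$, because $2(1-r)-1+r=1-r$.
\item Since $r<1$, the product of its roots is $-(1-r)<0$. Hence there is exactly one positive and one negative real root.
\end{itemize}
So $e'_4$ is hyperbolic with two unstable directions and one stable direction, and $\mathscr W^u(e'_4)$ is $2$-dimensional. It is tangent at $e'_4$ to the span of the eigenvector for $\ell/c$ and the eigenvector for the positive root of the quadratic.

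\textbf{Main obstacle.} The computation itself is elementary. The one delicate point is identifying $e'_4$ correctly as the $\varepsilon=0$ limit of $e_{4,\varepsilon}$, which has $v=1$. Only with $v=1$ does the $v$-direction contribute an unstable eigenvalue. Beyond that, one only has to check hyperbolicity, i.e.\ that no eigenvalue has zero real part. This uses $c>2$ and $\ell>0$ at $e'_1$, and $r<1$, $\ell>0$ at $e'_4$.
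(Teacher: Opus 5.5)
Your proof is correct and follows the paper's argument: compute the block upper-triangular Jacobian of $f$, obtain the eigenvalues $-\ell/c,\ \frac12\bigl(-c\pm\sqrt{c^2-4}\bigr)$ at $e'_1$ and $\ell/c,\ \frac12\bigl(-c\pm\sqrt{c^2+4(1-r)}\bigr)$ at $e'_4$, and read off the dimensions. You are also right that $e'_4=(1-r,0,1)$; the paper itself uses this point in its proof, so the $(1-r,0,0)$ written just before the lemma is a typo.
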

\begin{proof}
The linearization of the vector field $f$ for \eqref{eq:dyna-slow-0} is given by 
\[K:=\left(\begin{matrix}
0 &1 &0 \\ 2u-1+rv & -c & ru \\ 0 & 0& (\ell/c)(2v-1) \end{matrix}
\right).
\]
In particular, the linearization of $f$ at $e'_1=(0,0,0)$, given by 
\[K_1:=\left(\begin{matrix}
0 &1 &0 \\ -1 & -c & 0 \\ 0 & 0& -(\ell/c) \end{matrix}
\right),
\]
has the three negative eigenvalues 
\[ -\frac{\ell}{c}<0,\ \frac{1}{2}\left(-c\pm(c^2-4)^{1/2}\right).\]
On the other hand, the linearization of $f$ at  $e_4'=(1-r,0,1)$, given by 
\[K_4:=\left(\begin{matrix}
0 &1 &0 \\ 1-r &  -c & r(1-r) \\ 0 & 0& \ell/c \end{matrix}
\right),
\]
has the two positive and one negative eigenvalues
\[ \nu^+:=\frac{\ell}{c}>0,\ \mu^\pm:=\frac{1}{2}\left(-c\pm\left(c^2+4(1-r)\right)^{1/2}\right).\]
Therefore, for the flow of \eqref{eq:dyna-slow-0}, the stable manifold $\mathscr W^s(e'_1)$ of $e'_1$ is 3-dimensional, while  the unstable manifold $\mathscr W^u(e'_4)$ of $e'_4$ is  2-dimensional. 
\end{proof}
\medskip
We technically define the set $U$ (see Figure \ref{fig:U}) in the $uw_1 v$ space by 
\[ U:=\left\{ (u,w_1,v):\ u,v\in (0,1),\ -u<w_1<0,\ 1-u-rv>0 \right\}.\]
\begin{figure}[h]
\centering\includegraphics[scale=0.6]{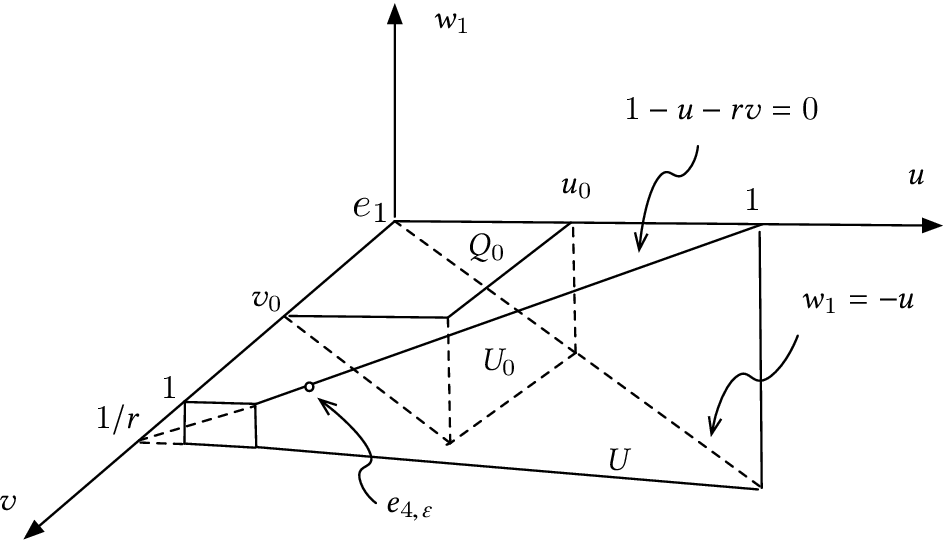}
\caption{}
\label{fig:U}
\end{figure}
\begin{lemma} \label{lem:unstable-direct-e4}
$\mathscr W^u(e'_4)$  intersects $U$. 
\end{lemma}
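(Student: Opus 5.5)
The plan is to use the unstable manifold theorem at $e_4'=(1-r,0,1)$, together with the eigenvectors of $K_4$, to exhibit points of $\mathscr W^u(e'_4)$ near $e_4'$ that lie in $U$. (The paper writes $e'_4=(1-r,0,0)$ earlier, but the linearization in Lemma \ref{lem:dim of W^s(e1') and W^u(e4')} is taken at $(1-r,0,1)$, which is the correct equilibrium since $v'=(\ell/c)v(v-1)$.) The point $e_4'$ lies on $\partial U$: there $v=1$, $w_1=0$ and $1-u-rv=0$. The tangent space $T_{e_4'}\mathscr W^u(e'_4)$ is spanned by the eigenvectors of $K_4$ for $\nu^+=\ell/c$ and $\mu^+=\frac12(-c+\sqrt{c^2+4(1-r)})>0$. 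It therefore suffices to find a vector $\eta$ in this span that points strictly into $U$. The local unstable manifold is a $C^1$ graph over this plane, so the points $e_4'+t\eta+o(t)$ for small $t>0$ will then lie in $U$, and the lemma follows.

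\textbf{Step 1: eigenvectors.} For $\mu^+$ I take $p=(1,\mu^+,0)$. This works because $\mu^+$ solves $\mu^2+c\mu-(1-r)=0$. For $\nu^+$ I take $q=(q_1,\nu^+q_1,1)$, where $q_1$ is determined by the second row:
\[
q_1\big((\nu^+)^2+c\nu^+-(1-r)\big)=r(1-r).
\]
The sign of $q_1$ depends on whether $\nu^+>\mu^+$, i.e. on the sign of $(\nu^+)^2+c\nu^+-(1-r)$, and I need to keep track of it.

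\textbf{Step 2: the inward direction.} Let $\eta=\alpha p+\beta q$. Every condition defining $U$ except one is strict away from $e_4'$: $u=1-r\in(0,1)$, and $-u<0$ gives room for $w_1$. So the requirements on $\eta$ are:
\begin{itemize}
\item[(a)] $\eta_v<0$, so that $v<1$; this forces $\beta<0$.
\item[(b)] $\eta_{w_1}<0$.
\item[(c)] $-\eta_u-r\eta_v>0$, so that $1-u-rv>0$.
\end{itemize}
Take $\beta=-1$, so $\eta_v=-1$. Condition (b) becomes $\alpha\mu^+-\nu^+q_1<0$, and condition (c) becomes $-\alpha+q_1+r>0$.

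I would try to choose $\alpha$ to satisfy both. Condition (c) needs $\alpha<q_1+r$, and (b) needs $\alpha<\nu^+q_1/\mu^+$; both are upper bounds on $\alpha$. Taking $\alpha$ very negative therefore satisfies both. Geometrically, moving along $-p$ decreases $u$ and $w_1$, which keeps $1-u-rv>0$ and $w_1<0$. With a large $|\alpha|$, however, the condition $w_1>-u$ must be checked, and it is fine: $w_1$ is near $0$ while $u$ is near $1-r>0$. The conditions $u\in(0,1)$ and $v\in(0,1)$ for small $t$ are also immediate.

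\textbf{Step 3: the main obstacle.} The main obstacle is the degenerate case $\nu^+=\mu^+$, where $K_4$ may fail to be diagonalizable. In that case I would use a generalized eigenvector, or an $\varepsilon$-free perturbation argument; the unstable eigenspace is still the $2$-dimensional invariant subspace. That subspace contains $p$ together with some vector having $v$-component $1$, so the same choice of $\alpha$ very negative and $\beta=-1$ works.

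Finally, because the conditions are strict inequalities on the first-order terms, the $o(t)$ corrections are harmless for small $t$.
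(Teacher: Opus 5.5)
Your proof is correct, and it differs from the paper's in an essential way. The paper's route fails, and your deviation from it is exactly what repairs it.

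\textbf{What the paper does.} The paper uses only the $\mu^+$-eigenvector $\varphi=(-1,-\mu^+,0)$, so its test points $e'_4+\delta\varphi$ keep $v=v^*$. It then asserts $v^*\in(0,1)$. That holds for the $\varepsilon>0$ equilibrium $v^*_\varepsilon=(1-s)/(1-rs)$. For the reduced flow \eqref{eq:dyna-slow-0}, however, $v^*=1$, as you correctly observed. Moreover, the plane $\{v=1\}$ is invariant under $v'=(\ell/c)v(v-1)$. Hence the whole branch of $\mathscr W^u(e'_4)$ tangent to $\varphi$ stays in $\{v=1\}$: it never enters $U$, and it cannot reach $e'_1$.

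\textbf{What your proof does.} Your choice $\eta=\alpha p+\beta q$ with $\beta<0$ brings in the $\nu^+$-direction precisely to push $v$ below $1$; this is the missing ingredient. Your strict-inequality argument on first-order terms also justifies passing from the unstable tangent plane to actual points $e'_4+t\eta+o(t)$ of the local unstable manifold. The paper glosses over that step.

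\textbf{Small corrections.}
At $e'_4$, three of the defining conditions of $U$ are tight, not one: $v<1$, $w_1<0$ and $1-u-rv>0$. These are exactly your (a)--(c), so only the wording is off.

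The claim $u=1-r\in(0,1)$ uses $r>0$. When $r=0$, condition (c) already gives $u<1$.

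\textbf{A simplification.} You can skip computing $q_1$ and the separate Jordan-block case.
\begin{itemize}
\item The unstable subspace $E^u$ of $K_4$ is two-dimensional and contains $p$.
\item $E^u$ cannot equal the subspace of vectors with vanishing $v$-component. That subspace is $K_4$-invariant (the third row of $K_4$ is $(0,0,\ell/c)$), and $K_4$ acts on it with eigenvalues $\mu^\pm$, where $\mu^-<0$.
\item Hence $E^u$ contains a vector with $v$-component $1$.
\end{itemize}
Your choice $\beta=-1$, $\alpha\ll 0$ then goes through verbatim in every case.
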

\begin{proof} Let $\varphi=(-1,-\mu^+,0)$ be the eigenvector of $e'_4$ for the positive eigenvalues $\mu^+$.  Since $1-u^*- r v^*=0$, $e'_4$ lies on  the boundary of $U$ (see Figure \ref{fig:U}), it suffices to show that  $e'_4+\delta \varphi$ lies in $U$ for $\delta\gtrsim 0$. Indeed, 
the $(u, v, w_1)$  components of $e'_4+\delta \varphi$ are given by
\begin{align*}
u&=u^*-\delta\\
w_1&=-\mu^+ \delta\\\
v&=v^*.
\end{align*}
Obviously,  for $\delta\gtrsim 0$ we have $w_1<0$, $u,v\in (0,1)$ because of  $u^*$, $v^*\in (0,1)$, and $1-u-rv=\delta>0$, which means $e'_4+\delta \varphi\in U$.
This completes the proof.
\end{proof}
\begin{lemma}\label{lemma:critical-slow-orbit}
 For the flow of \eqref{eq:dyna-slow-0},  $\mathscr W^s(e'_1)$ contains $U$.
\end{lemma}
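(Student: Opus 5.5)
The plan is to show that $U$ is positively invariant under the flow of \eqref{eq:dyna-slow-0}, and then that every forward trajectory in $U$ converges to $e'_1$. That gives $U\subset\mathscr W^s(e'_1)$. The key identity is that on $U$ we have $u'=w_1<0$ and $v'=(\ell/c)v(v-1)<0$. So $u$ and $v$ are strictly decreasing along trajectories as long as they stay in $U$.

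\textbf{Step 1: positive invariance.} I check the vector field on each boundary face of $U$. Let $(u,w_1,v)$ be a boundary point that is approached from inside $U$, and assume the remaining strict inequalities defining $U$ hold there.
\begin{itemize}
\item The $v$-equation is decoupled, and $v=0$, $v=1$ are equilibria of $v'=(\ell/c)v(v-1)$. Hence $v(z)\in(0,1)$ for all $z\ge 0$.
\item On $\{w_1=0\}$ we get $w_1'=-u(1-u-rv)<0$, so the field points into $\{w_1<0\}$.
\item On $\{w_1=-u\}$ we compute
\[
(w_1+u)'=-cw_1-u(1-u-rv)+w_1=u\,(c-2+u+rv)>0,
\]
using $c>2$. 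So the field points into $\{w_1>-u\}$.
\item On $\{1-u-rv=0\}$ we compute
\[
(1-u-rv)'=-w_1-(r\ell/c)\,v(v-1)>0,
\]
since $w_1<0$ and $0<v<1$.
\item For the $u$-bounds: $u$ is decreasing, so $u<1$ persists. The inequality $u'=w_1>-u$ gives $u(z)\ge u(0)e^{-z}>0$, so $u$ never reaches $0$ in finite time.
\end{itemize}
By a standard first-exit-time argument (the first time a trajectory hits $\partial U$, one of these strict inequalities would be violated), trajectories starting in $U$ remain in $U$ for all $z\ge0$.

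\textbf{Step 2: convergence.} Along a trajectory in $U$, $v$ is monotone and solves a logistic-type equation with $v(0)\in(0,1)$, so $v(z)\to0$ exponentially. The function $u$ is decreasing and bounded below by $0$, so $u(z)\to u_\infty\in[0,u(0)]$, with $u_\infty<1$. Since $-u<w_1<0$, the trajectory is bounded, and its $\omega$-limit set is nonempty, compact and invariant. That set is contained in $\{u=u_\infty,\ v=0\}$.

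Invariance together with $u'=w_1$ forces $w_1\equiv0$ on the $\omega$-limit set. Then $w_1'=0$ forces $u_\infty(1-u_\infty)=0$, and $u_\infty<1$ leaves $u_\infty=0$. Hence the $\omega$-limit set is $\{e'_1\}$, and every point of $U$ lies in $\mathscr W^s(e'_1)$.

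The expected main obstacle is the face $w_1=-u$, because it is the only place where the hypothesis $c>2$ enters. It is also what keeps trajectories from overshooting $u=0$, which would produce the oscillation forbidden by Lemma \ref{lem:local-anal-fast-syst}. The corner where $u\to0$ and the two faces $w_1=0$, $w_1=-u$ pinch together needs the separate exponential lower bound on $u$ given in Step 1, rather than the face computations alone.
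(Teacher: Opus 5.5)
Your proof is correct and follows essentially the same route as the paper. Both arguments check positive invariance on the faces $w_1=0$ and $w_1=-u$, with the identical computation $u(c-2+u+rv)>0$ on the latter, and then use monotone convergence of $u$ and $v$. The differences are that the paper works with the smaller box $U_0=\{0<u\le u^0,\ 0<v\le v^0,\ -u<w_1<0\}$ adapted to the initial point, which makes $1-u-rv>0$ automatic, and that it simply asserts $u\downarrow 0$. Your $\omega$-limit argument, which forces $w_1\equiv 0$ and hence $u_\infty(1-u_\infty)=0$, supplies the justification for that step that the paper omits.
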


\begin{proof} 
  Let $p^0=(u^0,w^0_1,v^0)\in U$. We are going to show that the orbit of \eqref{eq:dyna-slow-0} starting at $p^0$ tends to $e'_1$ as $z\to +\infty$. Let $U_0\subset U$ (see Figure \ref{fig:U}) be  defined by
\[ U_0:=\big\{(u,w_1,v)|\  0< u\le u^0, 0< v\le v^0, -u< w_1<0\big\}.\]
First we show that $U_0$ is positively invariant under the vector field $f$ of \eqref{eq:dyna-slow-0}. 
As $v^0\in (0,1)$, the equation for $v$ in \eqref{eq:dyna-slow-0} implies that the $v$-component of any trajectory starting in $U_0$  remains in  $U_0$. Hence, we only need to show that the set $Q_0$ in $uw_1$-space defined by
\[ Q_0:=\big\{(u,w_1)|\ 0< u\le u^0,  -u<w_1< 0\big\}\]
 is positively invariant under the vector field  $f_1(u,w):=(w_1, -cw_1-u(1-u-rv))$ for all $0<v\le v_0$.  To this end,
it suffices to show that the $f_1(q)\cdot n(q)> 0$ for any $q\in \partial Q_0$ with inward normal $n(q)$. The boundary $\partial Q_0$ is consisted of three subsets, given by
\begin{align*}
A_1&=\big\{(u,w_1)|\ 0\le u\le u^0,\   w_1= 0\big\} \\
 A_2&=\big\{(u,w_1)|\ u=u_0,\  -u_0< w_1<0\big\} \\
A_3&=\big\{(u,w_1)|\ 0< u\le u^0,\  -u= w_1\big\}.
\end{align*}
For $p\in A_1$, $n(q)=(0,-1)$, and  $f_1(q)\cdot n(q)= u(1-u-rv)\ge u(1-u^0-rv^0)> 0$; \\
For $p\in A_2$, $n(q)=(-1,0)$, and  $f_1(q)\cdot n(q)= -w_1>0$; \\
For $p\in A_3$, $n(q)=(1,1)$, and  $f_1(q)\cdot n(q)=w_1-cw_1-u(1-u-rv)=u[(c-2)+u+rv]> u(c-2)>0$, because of  $c>2$.\\

\noindent  Let $(u(z),w_1(z), v(z),w_2(z))$ be the trajectory of \ref{eq:LV-profile-slow-0} starting at $p^0\in U_0$, where $(u(z),w_1(z), v(z))$ is the trajectory of \eqref{eq:dyna-slow-0} starting at $(u^0,w^0_1,v^0)\in U_0$. As $(u(z),w_1(z), v(z))$ remains in $U_0$, we have
\[ v(z)\downarrow 0\  \mbox{and}\  u(z)\downarrow 0,\  \mbox{ as } z\to +\infty.\]
In view that  $-u(z) \le w_1(z)\le 0$ in $U_0$, we have $w_1(z)\to 0$ as  $z\to +\infty$. This completes the proof.
\end{proof}

\begin{lemma} \label{lem:Wu4-intersect-Ns1}
For the flow of  \eqref{eq:dyna-slow-0}, $\mathscr W^u(e'_4)$ intersects $\mathscr W^s(e'_1)$  transversally.
\end{lemma}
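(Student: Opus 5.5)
The plan is to show that $\mathscr W^u(e'_4)$ and $\mathscr W^s(e'_1)$ meet and then that their tangent spaces span $\mathbb R^3$ at an intersection point. By Lemma~\ref{lem:dim of W^s(e1') and W^u(e4')}, $\mathscr W^s(e'_1)$ is $3$-dimensional in the $3$-dimensional $(u,w_1,v)$ space. It is therefore an open subset of $\mathbb R^3$: it is the union over $z\le 0$ of the images under the time-$z$ flow of a local stable manifold, which is an open neighborhood of $e'_1$ because all three eigenvalues of $K_1$ are negative. Hence at any point $p$ of $\mathscr W^s(e'_1)$ we have $T_p\mathscr W^s(e'_1)=\mathbb R^3$. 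For every such $p$ that also lies in $\mathscr W^u(e'_4)$,
\[
T_p\mathscr W^u(e'_4)+T_p\mathscr W^s(e'_1)=\mathbb R^3 .
\]
This is exactly transversality, so the whole statement reduces to showing the intersection is nonempty.

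For nonemptiness, Lemma~\ref{lem:unstable-direct-e4} provides a point $q\in\mathscr W^u(e'_4)\cap U$, namely $q=e'_4+\delta\varphi$ for small $\delta>0$, up to the usual replacement of the eigen-direction by a nearby point on the curved manifold. Lemma~\ref{lemma:critical-slow-orbit} gives $U\subset\mathscr W^s(e'_1)$. Therefore $q\in\mathscr W^u(e'_4)\cap\mathscr W^s(e'_1)$.

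The one point needing care is the passage from the linear eigenvector to an actual point of $\mathscr W^u(e'_4)$. First, the proof of Lemma~\ref{lem:unstable-direct-e4} should be read with $e'_4=(u^*,0,v^*)=(1-r,0,1)$ at $\varepsilon=0$. Then $v=1$ lies on the boundary of $(0,1)$, so one must take points of $\mathscr W^u(e'_4)$ tangent to a combination of $\varphi$ and the $\nu^+$-eigendirection, which pushes $v$ slightly below $1$. The $\nu^+$-eigenvector has nonzero $v$-component, so the combination can be chosen with $v$-component negative. Second, $U$ is open: its inequalities $u,v\in(0,1)$, $-u<w_1<0$ and $1-u-rv>0$ are all strict, and the strict inequalities hold at $e'_4+\delta\psi$, where $\psi$ is that combination. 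Since $\mathscr W^u(e'_4)$ is tangent to $\mathrm{span}\{\varphi,\psi_{\nu^+}\}$ at $e'_4$, points of $\mathscr W^u(e'_4)$ within $o(\delta)$ of $e'_4+\delta\psi$ lie in $U$ for small $\delta$.

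With a point of the intersection secured, the openness of the $3$-dimensional $\mathscr W^s(e'_1)$ gives transversality immediately. In fact the intersection is the entire $2$-dimensional manifold $\mathscr W^u(e'_4)\cap\mathscr W^s(e'_1)$, an open piece of $\mathscr W^u(e'_4)$.
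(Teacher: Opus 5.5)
Your proposal is correct and follows the paper's proof. Nonemptiness comes from Lemma~\ref{lem:unstable-direct-e4} combined with $U\subset\mathscr W^s(e'_1)$ (Lemma~\ref{lemma:critical-slow-orbit}), and transversality holds because the $3$-dimensional, open $\mathscr W^s(e'_1)$ has tangent space $\mathbb R^3$ at every point.

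Your extra care in the nonemptiness step is warranted and fixes a real flaw in the paper. At $\varepsilon=0$ one has $v^*=1$, so the paper's point $e'_4+\delta\varphi$ lies in the invariant plane $\{v=1\}$ and not in $U$. Your repair works: tilt toward the $\nu^+$-direction, then use that $U$ is open and that $\mathscr W^u(e'_4)$ is tangent to its unstable eigenspace up to $o(\delta)$.
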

\begin{proof}
Since $\mathscr W^s(e'_1)$ contains $U$ by Lemma \ref{lemma:critical-slow-orbit} , and $\mathscr W^u(e'_4)$ intersects $U$ by Lemma \ref{lem:unstable-direct-e4}, we have $\mathscr W^u(e'_4)$ intersects $\mathscr W^s(e'_1)$. Denote by $T_p(\cdot)$ the tangent space of the manifold $(\cdot)$ at $p$.
For  $p\in \mathscr W^u(e'_4) \cap \mathscr W^s(e'_1)$, it's clear that $T_p(\mathscr W^s(e'_1))= \mathbb R^3$ by Lemma \ref{lem:dim of W^s(e1') and W^u(e4')}, which implies
\[ T_p(\mathbb R^3) =T_p(\mathscr  \mathscr W^s(e'_1))+T_p(\mathscr W^u(e'_4)) .\]
Hence, $\mathscr W^u(e'_4)$ intersects $\mathscr W^s(e'_1)$  transversally.
\end{proof}

\begin{theorem}\label{thm:exist}
Assume that $r<1$ and $s=O(\varepsilon)$. For  $\varepsilon\approx 0$, \eqref{eq:LV} has $ (e_4,e_1)$-waves with speed $c$ for all $c>2$.
\end{theorem}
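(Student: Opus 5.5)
We plan to apply Fenichel's geometric singular perturbation theory to \ref{eq:LV-profile-slow}, treating $s=O(\varepsilon)$ as part of the $\varepsilon$-dependence of the vector field. Because the fast eigenvalue $-c$ of the linearization of \ref{eq:LV-profile-fast-0} at $\mathscr S$ is bounded away from zero, $\mathscr S$ is normally hyperbolic; in fact it is normally attracting. We first restrict to a compact piece $\mathscr S_K$ of $\mathscr S$, for instance the graph of $h$ over a closed ball in $(u,w_1,v)$-space containing $e'_1$, $e'_4$ and the closure of the singular heteroclinic orbit $\gamma_0\subset \mathscr W^u(e'_4)\cap \mathscr W^s(e'_1)$ given by Lemma \ref{lem:Wu4-intersect-Ns1}. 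Fenichel's first theorem then gives, for $\varepsilon\approx0$, a locally invariant slow manifold
\[
\mathscr S_\varepsilon=\{w_2=h(v)+\varepsilon h_1(u,w_1,v,\varepsilon)\}
\]
that is $O(\varepsilon)$-close to $\mathscr S_K$ in $C^1$. Since $e_1$ and $e_{4,\varepsilon}$ are equilibria with $w_2=0$ and $v\in\{0,v^*_\varepsilon\}$, and since they lie within $O(\varepsilon)$ of $\mathscr S$, they belong to $\mathscr S_\varepsilon$. To see this, note that equilibria near a normally hyperbolic attracting slow manifold must lie on it, because points off it are attracted exponentially in the fast variable.

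Next we write the flow on $\mathscr S_\varepsilon$ in the chart $(u,w_1,v)$. The result is a regular perturbation $f_\varepsilon=f+O(\varepsilon)$ of \eqref{eq:dyna-slow-0}, with $v'=h(v)+\varepsilon h_1$. The equilibria of $f_\varepsilon$ are $e'_1$ and $e'_{4,\varepsilon}=(u^*_\varepsilon,0,v^*_\varepsilon)$. Both are hyperbolic, with the same eigenvalue counts as in Lemma \ref{lem:dim of W^s(e1') and W^u(e4')}: three negative eigenvalues at $e'_1$, and two positive and one negative at $e'_{4,\varepsilon}$. By the stable/unstable manifold theorem with parameters, $\mathscr W^u_\varepsilon(e'_{4,\varepsilon})$ (two-dimensional) and $\mathscr W^s_\varepsilon(e'_1)$ (three-dimensional) depend $C^1$ on $\varepsilon$ on compact pieces. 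Transversality is an open condition, so Lemma \ref{lem:Wu4-intersect-Ns1} carries over and the two manifolds still intersect for $\varepsilon\approx0$.

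The simplest route is to note that $\mathscr W^s_\varepsilon(e'_1)$ is open: every orbit in a fixed neighborhood of $e'_1$ tends to $e'_1$. Take a point $p_0$ on $\gamma_0$ near $e'_4$ and follow the unstable manifold. Fix a compact initial segment of $\mathscr W^u(e'_4)$ that reaches into $U$, and a time $Z$ after which $\gamma_0$ lies in a small ball around $e'_1$. By continuous dependence, the perturbed unstable manifold contains a point whose orbit enters that ball by time $Z$, and is therefore captured. This produces a heteroclinic orbit $\gamma_\varepsilon\subset\mathscr S_\varepsilon$ from $e_{4,\varepsilon}$ to $e_1$. It is a genuine orbit of \ref{eq:LV-profile-slow} because $\mathscr S_\varepsilon$ is invariant while the orbit stays in the compact region, and it stays there.

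The remaining step is to check that $\gamma_\varepsilon$ is nonnegative, so that it is an $(e_4,e_1)$-wave. This is the expected main obstacle, because near $e'_1$ a perturbed orbit could a priori cross $u=0$ or $v=0$. The planes $\{u=w_1=0\}$ and $\{v=w_2=0\}$ are invariant for the full system, so on $\mathscr S_\varepsilon$ the corresponding sets $\{u=w_1=0\}$ and $\{v=0\}$ are invariant, and an orbit cannot reach them in finite $z$. For $v$: $\gamma_\varepsilon$ starts with $v$ near $v^*_\varepsilon>0$, so $v>0$ throughout. For $u$: near $e'_1$ with $c>2$, the $(u,w_1)$ block has two real negative eigenvalues, and we will use the perturbed version of the invariant cone $Q_0$ from Lemma \ref{lemma:critical-slow-orbit}. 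The inward-pointing inequalities on $A_1$, $A_2$, $A_3$ are strict, namely $u(1-u^0-rv^0)>0$, $-w_1>0$ and $u(c-2)>0$, and each scales linearly in $u$, while the perturbation terms are $O(\varepsilon)$ and also vanish on $u=0$. Hence the cone $U_0$ remains positively invariant for $f_\varepsilon$, and the orbit, once it enters $U_0$ (which it does by continuity with $\gamma_0$), keeps $u>0$ and $w_1<0$. Before that, $u$ stays near positive values of $\gamma_0$ by continuity. This gives nonnegativity, and so the desired waves exist for every $c>2$.
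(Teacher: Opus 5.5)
Your proposal is correct and takes essentially the paper's route: Fenichel reduction to the attracting slow manifold $\mathscr S_\varepsilon$, then persistence of the connection from $e'_{4,\varepsilon}$ to $e'_1$ for the regularly perturbed flow $f_\varepsilon$. Your basin-of-a-sink plus continuous-dependence argument is just an elementary version of the paper's appeal to Szmolyan's transversality theorem, since $\mathscr W^s_\varepsilon(e'_1)$ is open in the three-dimensional chart.

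The nonnegativity check via the cone $U_0$ is an addition that the paper's proof of this theorem omits; positivity is only recovered implicitly later, from the monotonicity in Section 3. Two small remarks on that step. First, the $(u,w_1)$-equations on $\mathscr S_\varepsilon$ carry no $\varepsilon$-perturbation at all, so the cone argument in those variables is exact. Second, invariance of $\{v=0\}$ on $\mathscr S_\varepsilon$ does not follow automatically from invariance of $\{v=w_2=0\}$. It requires choosing the non-unique slow manifold so that $h_\varepsilon(u,w_1,0)\equiv 0$, i.e.\ so that it contains that invariant plane; this choice is possible but should be stated.
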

\begin{proof} 
According to Fenichel \cite{Feni},  for $\varepsilon\approx 0$,  \ref{eq:LV-profile-slow} has a normally hyperbolic invariant manifold $\mathscr S_\varepsilon$ given by
\begin{align}\label{eq:LV-profile-slow-mfld}
\mathscr S_\varepsilon :=\big\{(u,w_1,v, w_2)|\ w_2=h_\varepsilon(u,w_1,v),\  u,w_1,v \in\mathbb R\big\},\ \ \mbox{with}\ h_\varepsilon(u,w_1,v)=h(v)+O(\varepsilon),
\end{align}
which contains $e_1$ and $e_{4,\varepsilon}$. i.e., 
\[h_\varepsilon(0,0,0)=h_\varepsilon(u^*_\varepsilon,0,v^*_\varepsilon)=0.\] 
Like the reduced problem \ref{eq:LV-profile-slow-0},  the dynamics of  \ref{eq:LV-profile-slow} restricted  on $\mathscr S_\varepsilon$  is governed by the equations for $(u,w_1,v)$, given by
\begin{align}\label{eq:dyna-slow}
\vecthree{u'}{w_1'}{v'}=f_\varepsilon(u,w_1,v):=f(u,w_1,v)+O(\varepsilon).
\end{align}
For the flow of \eqref{eq:dyna-slow}, let $\mathscr W^s_\varepsilon(e'_1)$ denote the stable manifold of $e'_1$ and  $\mathscr W^u_\varepsilon(e'_{4,\varepsilon})$ denote the unstable manifold of $e'_{4,\varepsilon}=(u^*_\varepsilon,0,v^*_\varepsilon)$. As $\mathscr W^u(e'_4)$ intersects $\mathscr W^s(e'_1)$  transversally by Lemma \ref{lem:Wu4-intersect-Ns1}, $\mathscr W^u_\varepsilon(e'_{4,\varepsilon})$ will intersects $\mathscr W^s_\varepsilon(e'_1)$ transversally for $\varepsilon\approx 0$ (see Szmolyan \cite{Sz}, Theorem 2.2), which means \eqref{eq:dyna-slow} possesses a heteroclinic orbit connecting $e'_{4,\varepsilon}$ and $e'_1$. It follows that \ref{eq:LV-profile-slow} has a heteroclinic  orbit connecting $e_{4,\varepsilon}$ and $e_1$.   
\end{proof}

\section{Monotonicity and uniqueness}
Throughout this section, let \  $\psi_0(z)=(u_0(z),w_{1,0}(z), v_0(z),w_{2,0}(z))$ denote the singular heteroclinic orbit connecting $e_4$ and $e_1$ for the reduced problem \ref{eq:LV-profile-slow-0} constructed in Theorem \ref{thm:exist}. Let \    $\psi_\varepsilon(z)=(u_\varepsilon(z),w_{1,\varepsilon}(z),v_\varepsilon(z),w_{2,\varepsilon}(z))$ denote the heteroclinic orbit connecting $e_{4,\varepsilon}$ and $e_1$ obtained in Theorem \ref{thm:exist}, which is $O(\varepsilon)$-closed to $\psi_0(z)$. In view of Lemma \ref{lem:unstable-direct-e4}-\ref{lem:Wu4-intersect-Ns1} we see that $w_{2,0}(z)=h(v_0(z))$ with $(u_0(z),w_{1,0}(z),v_0(z))\in U$, for all $z\in\mathbb R$. This implies that
\[ w_{1,0}(z)<0,\ \  w_{2,0}(z)=c^{-1}\ell v_0(z)(v_0(z)-1)<0,\ \ \forall\ z\in\mathbb R.\] 
Let $\delta>0$ be small. There exist $z^-<z^+$ such that 
\[ |\psi_0(z)-e_1|<\delta,\ \forall\ z>z^+,\ \mbox{and}\ \  |\psi_0(z)-e_4|<\delta,\ \forall\ z<z^-.\]
Since $\psi_\varepsilon(z)=\psi_0(z)+O(\varepsilon)$ and $[z^-,z^+]$ is compact, it's clear that there exist  $\varepsilon_0>0$ such that for $\varepsilon<\varepsilon_0$, 
\begin{equation}\label{eq:w_1,w_2<0 in cpt interval}
w_{1,\varepsilon}(z)<0,\ w_{2,\varepsilon}(z)<0,\ \forall z\in [z^-,z^+]
\end{equation}
and 
\begin{align}\label{eq:psi-varepsilon-asymp}
|\psi_\varepsilon(z)-e_1|<\delta,\ \forall\ z>z^+,\ \mbox{and}\ \  |\psi_\varepsilon(z)-e_{4,\varepsilon}|<\delta,\ \forall\ z<z^-.
\end{align}
Hence, both $u_\varepsilon(z)$ and $v_\varepsilon(z)$ are monotone decreasing  for all $z \in [z^-,z^+]$. In the following, we will show that both $u_\varepsilon(z)$ and $v_\varepsilon(z)$ are monotone decreasing  for all $z \in (-\infty,z^-)\cup (z^+,+\infty)$.
\begin{lemma}\label{lem:decreasing z>z+}
 Both $u_\varepsilon(z)$ and $v_\varepsilon(z)$ are monotone decreasing  for all $z> z^+$.  
\end{lemma}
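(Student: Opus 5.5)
Near $e_1$ the profile lies on the perturbed slow manifold $\mathscr S_\varepsilon$ and converges to $e_1$. I would therefore show that $w_{1,\varepsilon}$ and $w_{2,\varepsilon}$ stay negative for $z>z^+$ using the local linear structure at $e_1$, in the form of a positively invariant cone argument. For $u$, I would repeat the construction of Lemma \ref{lemma:critical-slow-orbit}. By \eqref{eq:w_1,w_2<0 in cpt interval} we have $w_{1,\varepsilon}(z^+)<0$. By continuity, and since $\psi_\varepsilon$ is $O(\varepsilon)$-close to $\psi_0$ with $(u_0,w_{1,0},v_0)\in U$ (where $-u_0<w_{1,0}<0$ strictly), the point $(u_\varepsilon,w_{1,\varepsilon})(z^+)$ lies in the wedge $Q=\{0<u\le u_\varepsilon(z^+),\ -u<w_1<0\}$ once $\varepsilon$ is small. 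This needs a uniform margin at $z^+$, which holds because $z^+$ is fixed before $\varepsilon$ is chosen. The $u$-equation is exactly $u''+cu'+u(1-u-rv)=0$, with no $\varepsilon$-correction. Hence the boundary computations for $A_1,A_2,A_3$ from Lemma \ref{lemma:critical-slow-orbit} go through verbatim: on $A_1$ they use only $1-u-rv_\varepsilon>0$, which holds since $|\psi_\varepsilon-e_1|<\delta$ for $z>z^+$; on $A_3$ they use $c>2$. The one subtlety is that positivity of $v_\varepsilon$ is needed there. It follows because $v\equiv0$ is invariant and $v_\varepsilon(z^+)>0$. Thus $w_{1,\varepsilon}<0$ for all $z>z^+$, so $u_\varepsilon$ is decreasing.

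For $v$, I would consider $p:=v_\varepsilon'$. Differentiating \eqref{eq:LV-profile-v} gives the linear equation
\[
\varepsilon p''+cp'+\ell(1-su_\varepsilon-2v_\varepsilon)p=\ell s v_\varepsilon u_\varepsilon' .
\]
The right-hand side is $\le 0$, since $s\ge0$, $v_\varepsilon>0$ and $u_\varepsilon'<0$ from the previous step. The coefficient $\ell(1-su_\varepsilon-2v_\varepsilon)$ is close to $\ell>0$ for $z>z^+$. We know $p(z^+)<0$ and $p(z)\to0$ as $z\to\infty$. Suppose $p$ has a zero after $z^+$, and let $z_1>z^+$ be the first one. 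Then on $(z_1,\infty)$ either $p$ attains a positive maximum or $p$ approaches $0$. At a positive interior maximum we get $\varepsilon p''\le0$, $p'=0$ and $\ell(\cdots)p>0$, so the left-hand side is positive while the right-hand side is $\le0$, which is a contradiction. The remaining configurations (the first zero being a touching zero, or $p\le0$ with a zero) need a strong-maximum-principle type argument. This is the step I expect to be the main obstacle, because the zeroth-order coefficient has the "wrong" sign for a standard maximum principle.

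The alternative I would try first is the invariant-cone approach on $\mathscr S_\varepsilon$. On $\mathscr S_\varepsilon$ near $e_1$ we have $v'=h_\varepsilon(u,w_1,v)$ with $h_\varepsilon=c^{-1}\ell v(v-1)+O(\varepsilon)$. Moreover $h_\varepsilon$ vanishes on the invariant plane $\{v=0\}$, since that plane is invariant for \ref{eq:LV-profile-slow} and $\mathscr S_\varepsilon$ is unique near it. Hence $h_\varepsilon=v\,g_\varepsilon(u,w_1,v)$ with $g_\varepsilon=c^{-1}\ell(v-1)+O(\varepsilon)<0$ near $e_1$. Therefore $v_\varepsilon'=v_\varepsilon g_\varepsilon<0$ whenever $v_\varepsilon>0$ and the orbit stays in the $\delta$-neighbourhood of $e_1$, which \eqref{eq:psi-varepsilon-asymp} guarantees. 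This gives monotonicity of $v_\varepsilon$ directly and avoids the maximum principle entirely.
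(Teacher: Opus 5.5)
Your proof is correct in substance, but it takes a different route from the paper.

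\textbf{The paper's argument.} The paper uses a first-zero argument on the \emph{undifferentiated} profile equations. At a first point $\eta>z^+$ with $w_{1,\varepsilon}(\eta)=0$ one would need $w_{1,\varepsilon}'(\eta)\ge 0$, while the $w_1$-equation gives $w_{1,\varepsilon}'(\eta)=-u_\varepsilon(1-u_\varepsilon-rv_\varepsilon)<0$. Likewise, at a first zero $\zeta$ of $w_{2,\varepsilon}$ one gets $\varepsilon w_{2,\varepsilon}'(\zeta)=-\ell v_\varepsilon(1-su_\varepsilon-v_\varepsilon)<0$. So the obstacle you hit with $p=v_\varepsilon'$ is self-inflicted. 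There is no need to differentiate: every zero of $w_{2,\varepsilon}$ is a strict downward crossing, which also rules out touching zeros.

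\textbf{What each approach buys.} The paper's argument tacitly needs $u_\varepsilon,v_\varepsilon>0$ on $(z^+,\infty)$. It takes this from the nonnegativity built into the definition of an $(e_4,e_1)$-wave rather than proving it. Your route earns it. The wedge of Lemma~\ref{lemma:critical-slow-orbit} gives $u_\varepsilon>0$ and $w_{1,\varepsilon}<0$ together, and $v'=v\,g_\varepsilon$ on $\mathscr S_\varepsilon$ gives $v_\varepsilon>0$ and $v_\varepsilon'<0$ together.

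\textbf{The step that needs a better reason.} Slow manifolds are not unique, so ``$\mathscr S_\varepsilon$ is unique near the plane'' does not justify $h_\varepsilon(u,w_1,0)=0$. Two facts do:
\begin{enumerate}
\item The Fenichel manifold can be chosen to contain $\{v=w_2=0\}$, since that plane lies exactly in $\mathscr S$ and is invariant for every $\varepsilon$.
\item $\psi_\varepsilon$ lies on every such choice, because its backward orbit stays in a compact part of the neighbourhood while fast fibres separate at rate $c/\varepsilon$ in backward time.
\end{enumerate}
An elementary substitute avoids slow-manifold uniqueness altogether. Set $\rho=w_{2,\varepsilon}/v_\varepsilon$, which satisfies $\varepsilon\rho'=-\varepsilon\rho^2-c\rho-\ell(1-su_\varepsilon-v_\varepsilon)$. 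For $z>z^+$ the interval $[-2\ell/c,-\ell/(2c)]$ is positively invariant and contains $\rho(z^+)$, because $w_{2,0}/v_0=c^{-1}\ell(v_0-1)$ and $z^+$ is fixed before $\varepsilon$. Hence $v_\varepsilon>0$ and $v_\varepsilon'=\rho v_\varepsilon<0$ for all $z>z^+$.

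\textbf{A minor error.} Your remark that $v_\varepsilon>0$ follows from invariance of $v\equiv0$ is wrong as stated. In the four-dimensional phase space only $\{v=w_2=0\}$ is invariant, and that does not prevent a transversal sign change of $v$. It is also unnecessary on $A_3$, where $c-2+u+rv>c-2-r\delta>0$ once $\delta$ is small.
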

\begin{proof} 
We show that both $w_{1,\varepsilon}(z)<0$ and $w_{2,\varepsilon}(z)<0$ for all $z>z^+$ by contradiction. 
If  $w_{1,\varepsilon}(z)>0$ for some $z>z^+$, there must be a $\eta>z^+$ where $w_{1,\varepsilon}(\eta)=0$  and $w'_{1,\varepsilon}(\eta)\geq 0$. In this case, by \ref{eq:LV-profile-slow} and \eqref{eq:psi-varepsilon-asymp}, we have 
\[ w'_{1,\varepsilon}(\eta)=-u_\varepsilon(\eta)(1-u_\varepsilon(\eta)-rv_\varepsilon(\eta))<-u_\varepsilon(\eta)(1-\delta-r\delta)<0,\]
which contradicts to $w'_{1,\varepsilon}(\eta)\geq 0$. \\
Similarly, if  $w_{2,\varepsilon}(z)>0$ for some $z>z^+$, there must be a $\zeta>z^+$ where $w_{2,\varepsilon}(\zeta)=0$  and $w'_{2,\varepsilon}(\zeta)\geq 0$. In this case,  by \ref{eq:LV-profile-slow} and \eqref{eq:psi-varepsilon-asymp}, we have 
\[ w'_{2,\varepsilon}(\zeta)=-v_\varepsilon(\zeta)(1-v_\varepsilon(\zeta)-su_\varepsilon(\zeta))<-v_\varepsilon(\zeta)(1-\delta-s\delta)<0,\] 
which contradicts to $w'_{2,\varepsilon}(\zeta)\geq 0$. Therefore, $w_{2,\varepsilon}(z)<0$ for all $z>z^+$.  Thus both $w_{1,\varepsilon}(z)<0$ and $w_{2,\varepsilon}(z)<0$ for all $z>z^+$. 
\end{proof}

\begin{lemma}
For $\varepsilon\approx 0$, \ref{eq:LV-profile-slow}  has two distinct positive eigenvalues $0<\mu^+_{1,\varepsilon}<\mu^+_{2,\varepsilon}$ at $e_{4,\varepsilon}$. Up to a constant multiple, the eigenvalue $\mu^+_{2,\varepsilon}$ is associated an eigenvector with the signs of  $(u,w_1,v,w_2)$ given by $(-,-,-,-)$, whereas  the eigenvalue $\mu^+_{1,\varepsilon}$ is associated  an eigenvector with the signs of  $(u,w_1,v,w_2)$ given by $(-,-,+,+)$.    
\end{lemma}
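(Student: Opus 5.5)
Let me look at the linearization $J_\varepsilon$ at $e_{4,\varepsilon}=(u^*_\varepsilon,0,v^*_\varepsilon,0)$. There $1-u^*_\varepsilon-rv^*_\varepsilon=0$ and $1-su^*_\varepsilon-v^*_\varepsilon=0$, so $2u-1+rv=u^*_\varepsilon$ and $2v-1+su=v^*_\varepsilon$. The eigenvalue problem $J_\varepsilon\phi=\mu\phi$ with $\phi=(\phi_1,\phi_2,\phi_3,\phi_4)$ then reduces to
\[
\phi_2=\mu\phi_1,\quad \phi_4=\mu\phi_3,\quad (\mu^2+c\mu-u^*_\varepsilon)\phi_1=ru^*_\varepsilon\phi_3,\quad (\varepsilon\mu^2+c\mu-\ell v^*_\varepsilon)\phi_3=\ell s v^*_\varepsilon\phi_1 .
\]
The characteristic polynomial is therefore
\[
P_\varepsilon(\mu)=(\mu^2+c\mu-u^*_\varepsilon)(\varepsilon\mu^2+c\mu-\ell v^*_\varepsilon)-r\ell s\,u^*_\varepsilon v^*_\varepsilon .
\]
Since $s=O(\varepsilon)$, the coupling term is $O(\varepsilon)$.

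\textbf{Locating the positive roots.} The plan is a perturbation argument from the decoupled product $P^0_\varepsilon(\mu)=(\mu^2+c\mu-u^*_\varepsilon)(\varepsilon\mu^2+c\mu-\ell v^*_\varepsilon)$.
\begin{itemize}
\item Its positive roots are $\mu^+=\frac12(-c+(c^2+4u^*_\varepsilon)^{1/2})$, which tends to the value in Lemma \ref{lem:dim of W^s(e1') and W^u(e4')}, and $\nu^+_\varepsilon=(-c+(c^2+4\varepsilon\ell v^*_\varepsilon)^{1/2})/(2\varepsilon)\to \ell/c$.
\item Its other two roots are negative: one is $\mu^-$, and the other is $\approx -c/\varepsilon$.
\item Assuming the generic condition $\mu^+\neq \ell/c$, both positive roots are simple and separated. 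A simple root moves by $O(\varepsilon)$ under an $O(\varepsilon)$ perturbation, by the implicit function theorem or Rouché on small discs, so $P_\varepsilon$ has exactly two distinct positive roots near $\mu^+$ and $\ell/c$.
\item The root near $-c/\varepsilon$ is handled by rescaling $\mu=\tilde\mu/\varepsilon$; it stays negative.
\end{itemize}

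\textbf{Ordering and eigenvector signs.} I would read off the ordering and the signs via $\phi_3$ as a function of $\phi_1$.

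Set $\phi_1=-1$. Then $\phi_2=-\mu$, and from the third relation
\[
\phi_3=-(\mu^2+c\mu-u^*_\varepsilon)/(ru^*_\varepsilon).
\]
Alternatively, from the fourth relation, $\phi_3=-\ell s v^*_\varepsilon/(\varepsilon\mu^2+c\mu-\ell v^*_\varepsilon)$.

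\emph{The root near $\mu^+$.} Here $\varepsilon\mu^2+c\mu-\ell v^*_\varepsilon$ has the sign of $c\mu^+-\ell$. Using the fourth relation, $\phi_3$ is $O(s)$ with sign equal to that of $c\mu^+-\ell$ (for $s>0$). Then $\phi_4=\mu\phi_3$ carries the same sign.

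\emph{The root near $\ell/c$.} Normalise $\phi_3=\pm1$ instead, so that $\phi_1=ru^*_\varepsilon\phi_3/(\mu^2+c\mu-u^*_\varepsilon)$. The denominator has the sign of $\ell/c-\mu^+$.

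\emph{Matching with the claim.} So if $\ell/c<\mu^+$, i.e. $\mu^+_{1,\varepsilon}\approx\ell/c$ and $\mu^+_{2,\varepsilon}\approx\mu^+$, the signs come out as follows.
\begin{itemize}
\item For $\mu^+_{2,\varepsilon}$ the sign of $c\mu-\ell$ is positive, hence $\phi_3<0$ and we get $(-,-,-,-)$.
\item For $\mu^+_{1,\varepsilon}$ the denominator $\mu^2+c\mu-u^*$ is negative. Taking $\phi_3=+1$ gives $\phi_1<0$, hence $(-,-,+,+)$.
\end{itemize}
This matches the statement. Thus the proof reduces to the product-polynomial sign bookkeeping, given the ordering $\ell/c<\mu^+$.

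\textbf{Main obstacle.} The hard step is justifying this ordering, and ruling out $\mu^+\approx \ell/c$ where the roots collide. The lemma as stated implicitly presumes $\ell/c<\frac12(-c+(c^2+4(1-r))^{1/2})$. I would first attempt to check this from the standing assumptions. If it is not implied by them, I would state it as the needed condition, and note that the alternative ordering simply swaps the labels. The distinctness near a collision would then need a separate discriminant computation for the quartic, which I would avoid by assuming strict inequality.
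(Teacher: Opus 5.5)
\textbf{There is a genuine gap: you prove the lemma only for one ordering of the decoupled roots, and the statement needs no ordering at all.}

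Your algebra is essentially right, with one slip. With $\phi_1=-1$ the fourth relation gives $\phi_3=-\ell s v^*_\varepsilon/(\varepsilon\mu^2+c\mu-\ell v^*_\varepsilon)$, whose sign is \emph{opposite} to that of $c\mu-\ell$; that is what you actually use afterwards. The real problem is that you prove the lemma only under the extra hypothesis $\ell/c<\frac12(-c+(c^2+4u^*_\varepsilon)^{1/2})$. You leave two cases unproved, believing the statement presupposes that ordering:
\begin{itemize}
\item the reverse ordering;
\item the collision $\alpha^+_\varepsilon=\beta^+_\varepsilon$, where your simple-root perturbation argument breaks down.
\end{itemize}
The statement does not presuppose the ordering, and your hypothesis is in fact the restrictive case. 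Since $(c^2+4u^*_\varepsilon)^{1/2}<c+2u^*_\varepsilon/c$, the positive zero $\beta^+_\varepsilon$ of $q_\varepsilon(\mu)=\mu^2+c\mu-u^*_\varepsilon$ satisfies $\beta^+_\varepsilon<u^*_\varepsilon/c$. So your ordering forces $\ell<u^*_\varepsilon\approx 1-r$, and it excludes, for instance, every $\ell\ge 1$.

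The missing idea, which is the paper's route, is an interlacing argument that makes the ordering irrelevant. Set $p_\varepsilon(\mu)=\varepsilon\mu^2+c\mu-\ell v^*_\varepsilon$ and $F=p_\varepsilon q_\varepsilon-\ell rsu^*_\varepsilon v^*_\varepsilon$. Let $\gamma_{1,\varepsilon}\le\gamma_{2,\varepsilon}$ be the positive zeros $\alpha^+_\varepsilon,\beta^+_\varepsilon$ of $p_\varepsilon,q_\varepsilon$, in increasing order. For $rs>0$, as in your argument, we have:
\begin{itemize}
\item $F(0)=\ell u^*_\varepsilon v^*_\varepsilon(1-rs)>0$;
\item $F(\gamma_{1,\varepsilon})=F(\gamma_{2,\varepsilon})=-\ell rsu^*_\varepsilon v^*_\varepsilon<0$;
\item $F(+\infty)>0$.
\end{itemize}
Hence there are positive roots $\mu^+_{1,\varepsilon}<\gamma_{1,\varepsilon}\le\gamma_{2,\varepsilon}<\mu^+_{2,\varepsilon}$. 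There are no others, because $p_\varepsilon,q_\varepsilon$ are increasing on $(0,\infty)$ and $p_\varepsilon q_\varepsilon\le 0$ between $\gamma_{1,\varepsilon}$ and $\gamma_{2,\varepsilon}$. The coupling pushes the two roots apart whichever of $\alpha^+_\varepsilon,\beta^+_\varepsilon$ is larger, and even when they coincide.

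Normalizing $\bar u=-1$ gives $\bar v=-q_\varepsilon(\mu)/(ru^*_\varepsilon)$. Its sign depends only on where $\mu$ lies relative to $\beta^+_\varepsilon$. Since $\mu^+_{1,\varepsilon}<\beta^+_\varepsilon<\mu^+_{2,\varepsilon}$, you get $(-,-,+,+)$ for $\mu^+_{1,\varepsilon}$ and $(-,-,-,-)$ for $\mu^+_{2,\varepsilon}$.

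Your own formulas already show this. In the reverse ordering the sign of $c\mu-\ell$ flips, and so does $\phi_3$. So the sign pattern is tied to being the smaller or the larger eigenvalue, not to which decoupled root it came from. The reverse ordering reproduces the statement rather than swapping its labels, and neither an ordering check nor a discriminant computation is needed.
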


\begin{proof}
The  linearization of \ref{eq:LV-profile-slow}  at $e_{4,\varepsilon}$ is given by
\begin{align}
J_{4,\varepsilon}:=\varepsilon^{-1}\left(
\begin{matrix}
0 & \varepsilon &0 &0 \\ \varepsilon u^*_\varepsilon   & -\varepsilon c  & \varepsilon r u^*_\varepsilon & 0\\
0 &  0  & 0&  \varepsilon  \\ \ell s v^*_\varepsilon & 0 & \ell v^*_\varepsilon  &-c \\
\end{matrix}
\right),
\end{align}
whose  eigenvalues $\mu$ satisfy 
\[ F(\mu):=p_\varepsilon(\mu)q_\varepsilon(\mu) - \ell r s u^*_\varepsilon v^*_\varepsilon=0,\] where
\begin{align}
 p_\varepsilon(\mu)=\mu(\varepsilon\mu+c)- \ell v^*_\varepsilon,\ \  \ q_\varepsilon(\mu)=\mu( c+\mu)- u^*_\varepsilon.
\end{align}
For $\varepsilon \approx 0$, $p_\varepsilon(\mu)$ has one positive zero $\alpha^+_{\varepsilon}$, given by
\begin{align}\label{eq:alpha-pm}
\alpha^+_{\varepsilon}
=\frac{\ell}{c}+O(\varepsilon),\ 
\end{align}
 while  $q_\varepsilon(\mu)$  has  one positive zero $\beta^+_{\varepsilon}$, denoted by
\begin{align}\label{eq:mu-pm-q}
\beta^+_{\varepsilon}= \frac{1}{2}[-c+(c^2+4u^*_\varepsilon)^{1/2} ]=\frac{1}{2}[-c+ (c^2+4(1-r))^{1/2} ]+O(\varepsilon).
\end{align}
Let $\gamma_{1,\varepsilon}=\min(\alpha^+_{\varepsilon},\beta^+_{\varepsilon})$ and $\gamma_{2,\varepsilon}=\max(\alpha^+_{\varepsilon},\beta^+_{\varepsilon})$.
 Since $F(0)=\ell u^*_\varepsilon v^*_\varepsilon(1-rs)>0$, $F(\gamma_{1,\varepsilon})<0$, $F(\gamma_{2,\varepsilon})<0$  and  $F(+\infty)>0$,  
$J_{4,\varepsilon}$ has two  distinct positive eigenvalues $\mu^+_{1,\varepsilon}\in (0,\gamma_{1,\varepsilon})$ and $\mu^+_{2,\varepsilon}\in(\gamma_{2,\varepsilon},+\infty)$. 
Let $\mu^+$ be $\mu^+_{1,\varepsilon}$ or $\mu^+_{2,\varepsilon}$, and $\varphi$ be the associated eigenvector.  
The first three equations of $(J_{4,\varepsilon}-\mu^+)\varphi=0$ leads to
\begin{eqnarray*}
-\mu ^{+}u+w_{1} &=& 0 \\
u_\varepsilon ^* u+(-c-\mu ^{+})w_{1}+(ru_\varepsilon ^* )v
&=& 0 \\
-\mu ^{+}v+w_{2} &=&0.
\end{eqnarray*}
Taking $u=-1$ in the above equations gives the components of $\varphi$, which is given by
\begin{align*}
\bar{u}& =-1, \\
\bar{w}_{1}& =-\mu ^{+}\\
\bar{v}& =\frac{1}{ru_\varepsilon ^* }\left( u_\varepsilon ^* -\left( c+\mu ^{+}\right) \mu ^{+}\right) 
=- \frac{1}{ru_\varepsilon ^* } q_\varepsilon(\mu^+)\\
\bar{w}_{2}& =\mu ^{+}\bar{v}.
\end{align*}
For the case $\mu ^{+}=\mu _{1,\varepsilon}^{+}$,  $q_\varepsilon\left( \mu^{+}\right) <0$, we have the corresponding  $\bar{v}>0$ and $\bar{w}_{2}>0$. For the case $\mu ^{+}=\mu _{2,\varepsilon}^{+}$,  $q_\varepsilon\left( \mu^{+}\right) >0$, we have  the corresponding $\bar{v}<0$, and  $\bar{w}_{2}<0$. This completes the proof.
\end{proof}

When $z<z^-$,  $\psi_\varepsilon(z)$ leaves $e_{4,\varepsilon}$  either along the strong unstable manifold of $e_{4,\varepsilon}$, which is tangent to the eigenvector   for $\mu^+_{2,\varepsilon}$, or along the weak unstable manifold of $e_{4,\varepsilon}$,  which is tangent to the eigenvector  for $\mu^+_{1,\varepsilon}$. We show that it must be the former case.

\begin{lemma}\label{lem:decreasing z<z-}
  $\psi_\varepsilon(z)$ leaves $e_{4,\varepsilon}$ along the strong unstable manifold of  $e_{4,\varepsilon}$, where both $u_\varepsilon(z)$ and $v_\varepsilon(z)$ are  monotone decreasing  for all $z<z^-$.  
\end{lemma}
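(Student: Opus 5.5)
The plan is to argue by contradiction using the asymptotic expansion of $\psi_\varepsilon$ near $e_{4,\varepsilon}$, and then to read off monotonicity on $(-\infty,z^-)$ from the sign pattern of the strong eigenvector.

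\textbf{Step 1: the two ways of leaving $e_{4,\varepsilon}$.} At $e_{4,\varepsilon}$, $J_{4,\varepsilon}$ has exactly two eigenvalues with positive real part, namely $0<\mu^+_{1,\varepsilon}<\mu^+_{2,\varepsilon}$ from the preceding lemma. The remaining eigenvalues are one negative slow eigenvalue close to $\mu^-$ and one fast eigenvalue $\approx -c/\varepsilon$. Hence the unstable manifold of $e_{4,\varepsilon}$ is 2-dimensional and lies in $\mathscr S_\varepsilon$. Standard linearization (Hartman–Grobman with smooth spectral gap, or directly the variation of constants formula) gives, as $z\to-\infty$,
\[
\psi_\varepsilon(z)-e_{4,\varepsilon}=C_1e^{\mu^+_{1,\varepsilon}z}\varphi_1+C_2e^{\mu^+_{2,\varepsilon}z}\varphi_2+o\big(e^{\mu^+_{2,\varepsilon}z}\big),
\]
where $\varphi_1,\varphi_2$ are the eigenvectors of the preceding lemma, with sign patterns $(-,-,+,+)$ and $(-,-,-,-)$. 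If $C_1\neq0$, the weak term dominates, and the claim of the lemma is exactly that $C_1=0$.

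\textbf{Step 2: exclude $C_1\ne0$.}
\begin{itemize}
\item If $C_1>0$, then for $z\ll0$ we have $v_\varepsilon>v^*_\varepsilon$ and $w_{2,\varepsilon}>0$. I will show that the set $\{v>v^*_\varepsilon,\ w_2>0\}\cap\mathscr S_\varepsilon$, intersected with a neighborhood of the orbit where $u\le u^*_\varepsilon+\delta$, is positively invariant. At a first point $\zeta$ with $w_{2,\varepsilon}(\zeta)=0$ and $v_\varepsilon(\zeta)>v^*_\varepsilon$, the $w_2$-equation gives
\[
\varepsilon w_{2,\varepsilon}'(\zeta)=\ell v_\varepsilon\,(v_\varepsilon+su_\varepsilon-1)>0
\]
once $v_\varepsilon-v^*_\varepsilon$ dominates the $s(u_\varepsilon-u^*_\varepsilon)=O(\varepsilon)$ correction. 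Then $v_\varepsilon$ could never decrease to $0$ as $z\to+\infty$, contradicting \eqref{eq:BC}. The same conclusion is visible on the slow manifold, where $v'=(\ell/c)v(v-1)+O(\varepsilon)>0$ for $v>1$.
\item If $C_1<0$, then for $z\ll0$ we have $u_\varepsilon>u^*_\varepsilon$ and $w_{1,\varepsilon}>0$, while $v_\varepsilon<v^*_\varepsilon$. Consider the first zero $\eta$ of $w_{1,\varepsilon}$. There $w'_{1,\varepsilon}(\eta)=-u_\varepsilon(1-u_\varepsilon-rv_\varepsilon)\le0$ forces $1-u_\varepsilon-rv_\varepsilon\ge0$. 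But while $w_{1,\varepsilon}>0$ we have $u_\varepsilon$ increasing from above $u^*_\varepsilon$, and $v_\varepsilon$ decreasing. I will instead compare with the singular orbit: $\psi_\varepsilon$ is $O(\varepsilon)$-close to $\psi_0$ on $[z^--1,z^-]$, and $\psi_0$ there has $u_0<u^*$, $w_{1,0}<0$. The cleanest route is to choose $z^-$ so negative that the linear expansion of Step 1 is valid up to $z^-$, and to note that $w_{1,\varepsilon}(z^-)<0$ by \eqref{eq:w_1,w_2<0 in cpt interval}. With $C_1<0$ dominating, $w_{1,\varepsilon}(z^-)$ would be positive, which is a contradiction.
\end{itemize}
In fact this last comparison handles $C_1>0$ too, since then $w_{2,\varepsilon}(z^-)>0$, contradicting \eqref{eq:w_1,w_2<0 in cpt interval}.

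\textbf{Step 3: conclude.} With $C_1=0$, $\psi_\varepsilon$ lies on the strong unstable manifold, so $\psi_\varepsilon-e_{4,\varepsilon}\approx C_2e^{\mu^+_{2,\varepsilon}z}\varphi_2$ with $C_2>0$. This holds because $u_\varepsilon<u^*_\varepsilon$ at $z^-$ by closeness to $\psi_0\in U$. Hence $w_{1,\varepsilon},w_{2,\varepsilon}<0$ for all $z<z^-$, after possibly shrinking $\delta$ so that the linear approximation controls signs on all of $(-\infty,z^-)$.

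\textbf{Main obstacle.} The hard step is the uniformity in Step 2. The expansion of Step 1 must hold on a $z$-range reaching $z^-$ with sign-controlling error, uniformly as $\varepsilon\to0$, even though the weak eigenvalue gap $\mu^+_{2,\varepsilon}-\mu^+_{1,\varepsilon}$ is only $O(1)$. I would handle this by working inside $\mathscr S_\varepsilon$ in the 3-dimensional reduced flow \eqref{eq:dyna-slow}. There the spectral data are $O(\varepsilon)$-perturbations of those of $K_4$, which makes the local $C^1$-linearization uniform in $\varepsilon$.
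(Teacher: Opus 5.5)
Your decisive step is the paper's own. Both arguments expand $\psi_\varepsilon$ at $e_{4,\varepsilon}$, evaluate the expansion at the fixed point $z^-$, and compare with $w_{1,\varepsilon}(z^-)<0$, $w_{2,\varepsilon}(z^-)<0$ from \eqref{eq:w_1,w_2<0 in cpt interval}. That step has a genuine gap, which the paper's proof shares, and your remedy for the ``main obstacle'' does not address it.

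$C_1\neq0$ only means the weak term wins as $z\to-\infty$, roughly for $z$ below $(\mu^+_{2,\varepsilon}-\mu^+_{1,\varepsilon})^{-1}\ln(|C_1|/|C_2|)$, componentwise. Nothing puts $z^-$ below that threshold. You cannot push $z^-$ down either: $z^-$ is fixed by $\psi_0$ and $\delta$ before $\varepsilon$, whereas $C_1,C_2$ depend on $\varepsilon$ and on which connection is taken. A uniform $C^1$-linearization controls the remainder, not the ratio $C_1/C_2$.

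The failure is real. Since $e_1$ is a sink of the flow on $\mathscr S_\varepsilon$, the set $\mathscr W^u_\varepsilon(e_{4,\varepsilon})\cap\mathscr W^s_\varepsilon(e_1)$ is open in the two-dimensional unstable manifold. So if the strong branch is a connection, so is every nearby orbit with small $C_1\neq0$ of either sign. For these orbits the strong term dominates at $z^-$, giving the pattern $(-,-,-,-)$ and no contradiction.

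Worse, when $\ell/c>\mu^+$ the weak eigenvalue is close to $\mu^+$, and $q_\varepsilon(\mu^+_{1,\varepsilon})=\ell rsu^*_\varepsilon v^*_\varepsilon/p_\varepsilon(\mu^+_{1,\varepsilon})=O(\varepsilon)$. The weak eigenvector is therefore $(-1,-\mu^+_{1,\varepsilon},O(\varepsilon),O(\varepsilon))$, and even $C_1,C_2>0$ of order one produce $(-,-,-,-)$ at $z^-$. The same $O(\varepsilon)$ size defeats your first-bullet invariance argument, because $v_\varepsilon-v^*_\varepsilon$ need not dominate $s(u^*_\varepsilon-u_\varepsilon)$.

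What is missing is an identification of the departure direction of the particular orbit, and that direction depends on the parameters. On the singular level $1-v_0\sim e^{\ell z/c}$. The $\mu^+$-eigenvector $(1,\mu^+,0)$ of $K_4$ has zero $v$-entry, since the plane $v=1$ is invariant. Hence $\psi_0$ has a nonzero component along the $\ell/c$-eigenvector $(-1,-\ell/c,x_3)$, where $x_3=-q_0(\ell/c)/(r(1-r))$ and $q_0(\mu)=\mu^2+c\mu-(1-r)$.

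\textbf{Case $\ell/c<\mu^+$}, i.e.\ $\ell+\ell^2/c^2<1-r$. Here the $\ell/c$ direction is the weak one and $x_3>0$. Since $v_0<1$, its coefficient must be negative, so $u_0>1-r$ and $w_{1,0}>0$ near $-\infty$. Then \eqref{eq:w_1,w_2<0 in cpt interval} itself fails for small $\delta$, and the statement is false for the $\psi_\varepsilon$ close to $\psi_0$.

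\textbf{Case $\ell/c>\mu^+$}. The strong branch does enter $U$ and is a connection. By the openness above, however, it is one member of a one-parameter family of connections. The lemma can then hold only as a \emph{selection} of $\psi_\varepsilon$, not as a consequence of a sign test at $z^-$.

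In either case, no sharpening of the uniformity estimates will rescue your Step 2.
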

\begin{proof}
The asymptotic form of  $\psi_\varepsilon(z)$ for $z<z^-$ is given by
\begin{align}\label{eq:asypm-form-at-e4}
(u_\varepsilon(z),w_{1,\varepsilon}(z),v_\varepsilon(z),w_{2,\varepsilon}(z))=(u^*_\varepsilon,0,v^*_\varepsilon,0)+Ce^{\mu^+ z}\varphi+o\left(e^{\mu^+ z}\right),\ \ \forall z<z^-,
\end{align}
where $\mu^+$ is either $\mu^+_{1,\varepsilon}$ or $\mu^+_{2,\varepsilon}$, and $\varphi$ is the associated eigenvector, and $C$ is a nonzero constant.
Recalling that $w_{1,\varepsilon}(z^-)<0$ and $w_{2,\varepsilon}(z^-)<0$, if $\mu=\mu^+_{1,\varepsilon}$,  in viewing that the sign of the eigenvector $\varphi$ for $\mu=\mu^+_{1,\varepsilon}$ is $(-,-,+,+)$, no nonzero constant $C$ can make the sign of \eqref{eq:asypm-form-at-e4} consistent. This implies  $\psi_\varepsilon(z)$ leaves $e_{4,\varepsilon}$ along the strong unstable manifold of  $e_{4,\varepsilon}$. Let $\mu=\mu^+_{2,\varepsilon}$, in viewing that  the sign of the eigenvector $\varphi$ for $\mu=\mu^+_{2,\varepsilon}$ is $(-,-,-,-)$, $C$ must be positive so that the sign of \eqref{eq:asypm-form-at-e4} is consistent. In this case, we have $w_{1,\varepsilon}(z)<0$ and $w_{2,\varepsilon}(z)<0$ for all $z<z^-$, which means both $u_\varepsilon(z)$ and $u_\varepsilon(z)$ are  monotone decreasing  for all $z<z^-$.
\end{proof}

The monotonicity and uniqueness of $(e_4,e_1)$-wave follow from the results of this section.  
\begin{theorem}\label{thm:monotone}
For each $\varepsilon\approx 0$, both the $u$-component  and $v$-component of the $(e_{4},e_{1})$-wave in Theorem \ref{thm:exist} are monotone decreasing. Moreover, the $(e_{4},e_{1})$-wave in Theorem \ref{thm:exist} is unique up to a shift in $z$.
\end{theorem}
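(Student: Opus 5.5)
Monotonicity follows by assembling the three regions already treated. Fix $\delta>0$ small and the corresponding $z^-<z^+$. By \eqref{eq:w_1,w_2<0 in cpt interval}, for $\varepsilon<\varepsilon_0$ we have $w_{1,\varepsilon}<0$ and $w_{2,\varepsilon}<0$ on $[z^-,z^+]$. Lemma \ref{lem:decreasing z>z+} gives the same on $(z^+,\infty)$, and Lemma \ref{lem:decreasing z<z-} gives it on $(-\infty,z^-)$. Since $u_\varepsilon'=w_{1,\varepsilon}$ and $v_\varepsilon'=w_{2,\varepsilon}$, both components are strictly decreasing on $\mathbb R$.

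One point in Lemma \ref{lem:decreasing z>z+} needs checking: the contradiction argument requires a first sign change of $w_{i,\varepsilon}$ after $z^+$. This exists because $w_{i,\varepsilon}(z^+)<0$ by continuity from the compact-interval estimate. For $w_{2,\varepsilon}$ the relevant equation is $\varepsilon w_2'=-cw_2-\ell v(1-su-v)$, so at a zero $\zeta$ we get $\varepsilon w_2'(\zeta)<0$, and the factor $\varepsilon$ does not change the sign. I would also note positivity of $u_\varepsilon,v_\varepsilon$, which holds near $e_1$ because they decrease to $0$.

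For uniqueness up to shift, I would argue that the heteroclinic orbit of \eqref{eq:dyna-slow} on $\mathscr S_\varepsilon$ is unique as an orbit. By Lemma \ref{lem:decreasing z<z-}, any $(e_4,e_1)$-wave that is $O(\varepsilon)$-close to $\psi_0$ must leave $e_{4,\varepsilon}$ tangent to the strong unstable eigenvector for $\mu^+_{2,\varepsilon}$. The strong unstable manifold of $e_{4,\varepsilon}$ is one-dimensional, and it lies in $\mathscr S_\varepsilon$ since $\mu^+_{2,\varepsilon}=O(1)$: the fast eigenvalue is $\approx -c/\varepsilon$, which is negative. It therefore consists of exactly two orbits, one on each side of $e_{4,\varepsilon}$. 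Only the branch with $C>0$ (sign pattern $(-,-,-,-)$ with $C>0$) is compatible with decreasing, nonnegative profiles: the other branch has $u>u^*_\varepsilon$ and $w_1>0$ initially and cannot be the nearby orbit. Hence the wave is a single orbit, i.e.\ unique up to translation in $z$.

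The main obstacle is justifying that every wave with the given $c$ (for $\varepsilon$ small) is $O(\varepsilon)$-close to $\psi_0$ and lies on $\mathscr S_\varepsilon$, so that Lemma \ref{lem:decreasing z<z-} applies. I would handle this by two observations. First, bounded solutions near the slow manifold stay on it by Fenichel theory, since the transverse direction is strongly stable in forward $z$ and any orbit from $e_{4,\varepsilon}$ leaving off $\mathscr S_\varepsilon$ would blow up. Second, at $\varepsilon=0$ the unstable manifold $\mathscr W^u(e_4')$ is two-dimensional, and its one-dimensional strong part, along $\mu^+$ with eigenvector $(-1,-\mu^+,0)$, is the singular orbit, entering $U$ by Lemma \ref{lem:unstable-direct-e4}. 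If this global step proves delicate, I would state uniqueness within the class of waves obtained in Theorem \ref{thm:exist}, that is, those $O(\varepsilon)$-close to $\psi_0$, which is what the statement asserts.
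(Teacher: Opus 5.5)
Your assembly of monotonicity from \eqref{eq:w_1,w_2<0 in cpt interval}, Lemma~\ref{lem:decreasing z>z+} and Lemma~\ref{lem:decreasing z<z-} is exactly the paper's proof. So is your uniqueness argument (one-dimensional strong unstable manifold, only the $C>0$ branch admissible). You therefore inherit its weak point: both halves hinge on Lemma~\ref{lem:decreasing z<z-}, i.e.\ on the wave leaving $e_{4,\varepsilon}$ along the strong direction.

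That claim does not follow from $w_{1,\varepsilon}(z^-)<0$, $w_{2,\varepsilon}(z^-)<0$. The expansion $Ce^{\mu^+z}\varphi+o(e^{\mu^+z})$ holds only as $z\to-\infty$, not on all of $(-\infty,z^-)$. An orbit tangent at $-\infty$ to the weak eigenvector can still be dominated by its strong component at $z=z^-$.

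Your sentence ``the heteroclinic orbit of \eqref{eq:dyna-slow} on $\mathscr S_\varepsilon$ is unique as an orbit'' is in fact false. All three eigenvalues at $e'_1$ are negative, so $\mathscr W^s_\varepsilon(e'_1)$ is open in the three-dimensional slow phase space. Its transverse intersection with the two-dimensional $\mathscr W^u_\varepsilon(e'_{4,\varepsilon})$ is therefore two-dimensional, i.e.\ a one-parameter family of heteroclinic orbits modulo shift. Every member except the strong branch is tangent at $-\infty$ to the eigenvector for $\mu^+_{1,\varepsilon}$, whose $(w_1,w_2)$-signs are $(-,+)$. Hence one of $u_\varepsilon,v_\varepsilon$ increases near $-\infty$ for those members. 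Restricting to waves $O(\varepsilon)$-close to $\psi_0$ does not rescue uniqueness, because that class is still a continuum.

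Your proposed repair of the global step is also wrong. At $\varepsilon=0$ the plane $\{v=1\}$ is invariant for \eqref{eq:dyna-slow-0}, and the orbit leaving $e'_4$ tangent to $(-1,-\mu^+,0)$ lies in it. It is the KPP front of $u''+cu'+u(1-r-u)=0$ and tends to $(0,0,1)$, not to $e'_1$. It never meets $U$, which requires $v<1$; the proof of Lemma~\ref{lem:unstable-direct-e4} overlooks that $v^*=1$ at $\varepsilon=0$.

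Whether $\mu^+$ is the strong eigenvalue at all depends on the sign of $\mu^+-\ell/c$. If $\mu^+>\ell/c$, the eigenvector of $K_4$ for $\nu^+=\ell/c$ pointing into $\{v<1\}$ has positive $u$- and $w_1$-components. Then every singular heteroclinic orbit to $e'_1$ has $u$ overshooting $u^*$ as $z\to-\infty$, and the waves obtained by perturbing these orbits leave along the weak direction and are not monotone.

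What your argument does prove is this: any $(e_4,e_1)$-wave with both components nonincreasing lies on the $C>0$ branch of the strong unstable manifold, hence is unique up to shift. When $\ell/c>\mu^+$, that branch at $\varepsilon=0$ is tangent to an eigenvector with signs $(-,-,-)$ and enters $U$. It is the orbit one must select, rather than an arbitrary point of the intersection, to obtain a monotone wave.

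Separately, in Lemma~\ref{lem:decreasing z>z+} the strict sign at the first zero of $w_{i,\varepsilon}$ needs $u_\varepsilon,v_\varepsilon>0$ there. Justifying this by ``they decrease to $0$'' is circular. For $u_\varepsilon$, use the positive invariance of $Q_0$ from Lemma~\ref{lemma:critical-slow-orbit}; the $(u,w_1)$-equations do not involve $\varepsilon$. For $v_\varepsilon$, use a lower bound on $w_{2,\varepsilon}/v_\varepsilon$ obtained from its Riccati equation.
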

\begin{proof}
The monotonicity follows from \eqref{eq:w_1,w_2<0 in cpt interval}, Lemma \ref{lem:decreasing z>z+} and \ref{lem:decreasing z<z-}. Since $\psi_\varepsilon(z)$ leaves $e_{4,\varepsilon}$ along the strong unstable manifold of $e_{4,\varepsilon}$ which is one dimensional, hence the $(e_{4},e_{1})$-wave leaves $e_{4,\varepsilon}$ only along one branch and then we obtain the uniqueness up to a shift in $z$.
\end{proof}

Here we note that in Chen et al. \cite{Chen-Hsiao-Wang},  the authors did not consider the asymptotic behavior of the $(e_4,e_1)$-wave as the wave approaches $e_4$. They prove the non-monotonicity by choosing some particular parameters such that the maximum value of the sub-solution is larger than the equilibrium $e_4$. Hence the relation between non-monotonicity and decay rates in the asymptotic behaviors is still not clear in [2]. In our case, we show that when the $(e_4,e_1)$-wave approaches $e_4$ along the strong unstable manifold, the corresponding wave is monotone.

\section{The essential spectrum}
 Let  $(U_\varepsilon ,V_\varepsilon ) $ be a  $(e_4$,$e_1)$-wave for $\varepsilon\approx 0$. The linearized operator around
the $\left( U_\varepsilon ,V_\varepsilon \right) $ is given by%
\begin{equation*}
L[\varphi ]=D_\varepsilon\partial _{z}^2\varphi +c\partial _{z}\varphi +B_\varepsilon\left(
z\right) \varphi,
\end{equation*}%
where%
\begin{equation}\label{phi D B}
\varphi =\left(
\begin{array}{c}
\phi_1 \\
\phi_2%
\end{array}%
\right) ,\ D_\varepsilon=\left(
\begin{array}{cc}
1 & 0 \\
0 & \varepsilon%
\end{array}%
\right) ,\ B_\varepsilon(z) =\left(
\begin{array}{cc}
1-2U_\varepsilon -rV_\varepsilon  & -rU_\varepsilon  \\
-s\ell V_\varepsilon  & \ell \left( 1-sU_\varepsilon -2V_{\varepsilon
}\right)%
\end{array}%
\right).
\end{equation}%
The function $\varphi$ belongs to the space $C_{0}$ defined in \eqref{def:C0}.
In this case the domain of definition $D(L)$ for $L$ is considered as the space $C_{0}^1$ of
all $\varphi \in C_{0}$ such that $\varphi $ and $\varphi '$ belong to
$C_{0}$. As $(U_\varepsilon,V_\varepsilon)$ is of monostable type,  the essential spectrum of $L$ acting in $C_0$ must have elements with positive real parts (see Henry \cite{Henry}, Volpert et al. \cite{Vvv}). As suggested in Volpert et al. \cite{Vvv},  we restrict $L$ to act in the weighted space $C_{\omega }$, defined in \eqref{def:C-omega}.
Denote by $\hat{L}_\omega$  the restriction of $L$ on $C_\omega$. 
Define the operator $L_\omega$ acting on $C_0$ by
\begin{equation*}
L_{\omega }[\varphi]:=\omega\hat{L}_{\omega
}[\omega^{-1}\varphi ],\ \varphi \in C_{0}%
.
\end{equation*}%
Then the spectrum of $\hat{L}_{\omega }$ (acting in $C_{\omega }$), coincides with
the spectrum of the operator $L_{\omega }$ (acting in $C_{0}$). The explicit form of $L_{\omega
}[\varphi ]$ is given by
\begin{equation}
L_{\omega }[\varphi ]=D_\varepsilon\partial _{z}^2\varphi +M_\varepsilon(z)\partial _{z}\varphi
+(R_\varepsilon(z)+B_\varepsilon(z))\varphi,  \label{gen op}
\end{equation}%
where%
\begin{align*}
M_\varepsilon(z) &=\diag\left(m_1(z),m^\varepsilon_2(z)\right):=cI-2g_1(z)D,\ \  
m_1(z)=c-2g_1(z) , \ m^\varepsilon_2(z) =c-2\varepsilon  g_1(z)\\
R_\varepsilon(z) &=\diag\left(r_1(z),r^\varepsilon_2(z)\right):=g_2(z)D-cg_1(z)I,\ \ 
r_1(z)=g_2-cg_1(z),\ r^\varepsilon_2(z)=\varepsilon g_2(z)-cg_1(z).
\end{align*}%
with
\begin{equation*}
g_1(z)=\frac{\delta e^{\delta z}}{1+e^{\delta z}},\ \ \ g_2(z)=\frac{%
\delta ^2e^{\delta z}\left( e^{\delta z}-1\right) }{\left( 1+e^{\delta
z}\right) ^2}.
\end{equation*}

 Let's denote $\sigma \left( L_\omega\right) $, $\sigma _{p}\left( L_\omega\right) $ and $\sigma
_{e}\left( L_\omega\right)$ the spectrum, point spectrum and essential spectrum of $L_\omega$, respectively. 
By  Henry \cite{Henry}, the essential spectrum $\sigma_{e}( L_\omega)$ lies on or contained in the region surrounded by the Fredholm borders, given by%
\begin{equation}\label{eq:Fredholm-S-pm}
S_\pm :=\left\{ \lambda \in \mathbb{C}\,\big|\,\det \left( -\tau ^2D_\varepsilon+i\tau M_\varepsilon^{\pm }+R_\varepsilon^{\pm}+B_\varepsilon^{\pm }-\lambda I \right)
=0,\tau \in \mathbb{R}\right\},
\end{equation}
where $M_\varepsilon^\pm=M_\varepsilon(\pm\infty)$, $R_\varepsilon^\pm=R_\varepsilon(\pm\infty)$, and $B_\varepsilon^\pm=B_\varepsilon(\pm\infty)$ are given by

\begin{equation}\label{eq:M-R-B-pm}
\begin{aligned}
&M_\varepsilon^+=cI-2\delta D_\varepsilon,& &R_\varepsilon^+=\delta^2 D_\varepsilon-c\delta I,& 
& B_\varepsilon^+=\left(\begin{matrix} 1 & 0 \\  0 & \ell \end{matrix}\right),&  \\
&M_\varepsilon^-=cI,& &R_\varepsilon^-=0,& 
& B_\varepsilon^-=\left(\begin{matrix} -u^*_\varepsilon & -r u^*_\varepsilon \\  -s\ell v^*_\varepsilon & -\ell v^*_\varepsilon \end{matrix}\right).&
\end{aligned}
\end{equation}

\begin{proposition} Assume $c>2$.  For  $\varepsilon\approx 0$, we have the following results for $\sigma_e(L_\omega)$.
\label{essential spectrum weighted}
\begin{enumerate}
\item If $\ell >2$ and $c^2\leq \frac{\ell ^2}{\ell -1}$, then  for all $ \delta>0$
\[ \sup\Re \sigma _e\left( L_{\omega }\right) >0.\]
In this case, the $(e_4$,$e_1$)-wave is unstable in $C_\omega$ for all $ \delta>0$. 
\item If $\ell \leq 2$, or $\ell >2$ and $\frac{\ell ^2}{\ell -1}<c^2$, 
we have that
\[ \sup\Re \sigma _e\left( L_{\omega }\right) <0\]
for all $\delta>0$ satisfying 
\begin{equation}
\delta ^2-c\delta +1<0\ \ \mbox{and}\ \  \varepsilon \delta ^2-c\delta +\ell <0.
\label{delta ineq weight}
\end{equation}
\end{enumerate}
\end{proposition}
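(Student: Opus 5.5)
The plan is to compute the Fredholm borders $S_\pm$ in \eqref{eq:Fredholm-S-pm} explicitly from the limits \eqref{eq:M-R-B-pm}. Since $\sigma_e(L_\omega)$ lies on or to the left of these curves, and points of the borders themselves belong to $\sigma_e$, the sign of $\sup\Re\sigma_e(L_\omega)$ is decided by the rightmost points of $S_+$ and $S_-$.

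\emph{The border $S_+$.} At $+\infty$ all matrices are diagonal, so $S_+$ is the union of two parabolas:
\[
\lambda=-\tau^2+i\tau(c-2\delta)+\delta^2-c\delta+1,\qquad
\lambda=-\varepsilon\tau^2+i\tau(c-2\varepsilon\delta)+\varepsilon\delta^2-c\delta+\ell .
\]
Their real parts are maximized at $\tau=0$, with values $\delta^2-c\delta+1$ and $\varepsilon\delta^2-c\delta+\ell$. These are real points of $\sigma_e$. Hence $S_+$ lies in the open left half plane if and only if \eqref{delta ineq weight} holds, and otherwise $\sup\Re\sigma_e\ge\max(\delta^2-c\delta+1,\ \varepsilon\delta^2-c\delta+\ell)\ge 0$.

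\emph{The border $S_-$.} At $-\infty$ we must find the roots $\lambda$ of
\[
\det\left(-\tau^2D_\varepsilon+ic\tau I+B_\varepsilon^- -\lambda I\right)=0 .
\]
Since $s=O(\varepsilon)$, the off-diagonal product $rs\ell u^*_\varepsilon v^*_\varepsilon$ is $O(\varepsilon)$. The two roots are therefore perturbations of $-\tau^2-u^*_\varepsilon+ic\tau$ and $-\varepsilon\tau^2-\ell v^*_\varepsilon+ic\tau$. Writing the quadratic in $\lambda$ out exactly, I would show that both roots have real part at most $-\kappa<0$ uniformly in $\tau\in\mathbb R$ and in small $\varepsilon$, using $u^*_\varepsilon\approx 1-r>0$ and $v^*_\varepsilon\approx 1$.

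I expect this uniform-in-$\tau$ control to be the main technical obstacle. The mismatch between $\tau^2$ and $\varepsilon\tau^2$ prevents a naive perturbation argument for large $\tau$. What I would try first is the explicit discriminant: the two diagonal entries have real parts separated by $(1-\varepsilon)\tau^2+O(1)$, while the coupling is $O(\varepsilon)$, so the roots move by at most $O(\varepsilon/(1+\tau^2))$. This gives the uniform bound, and the argument is independent of $\delta$, since $R^-_\varepsilon=0$ and $M^-_\varepsilon=cI$.

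\emph{Solving the $\delta$-inequalities.} The first inequality in \eqref{delta ineq weight} requires $\delta\in(\delta_-,\delta_+)$, where $\delta_\pm=\tfrac12\left(c\pm\sqrt{c^2-4}\right)$, which is nonempty because $c>2$. The second inequality requires $\delta$ to exceed
\[
\delta_\varepsilon=\frac{c-\sqrt{c^2-4\varepsilon\ell}}{2\varepsilon}=\frac{\ell}{c}+\frac{\varepsilon\ell^2}{c^3}+O(\varepsilon^2),
\]
and to stay below a root of order $1/\varepsilon$. Since $\ell/c>0$ lies below $\delta_+$ whenever the two sets intersect, compatibility reduces to $\delta_\varepsilon<\delta_+$, that is, to $2\ell<c^2+c\sqrt{c^2-4}$ up to $O(\varepsilon)$. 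If $\ell\le2$ this holds because the right-hand side exceeds $4$. If $\ell>2$ and $c^2\ge2\ell$ it is automatic. If $c^2<2\ell$, squaring $c\sqrt{c^2-4}>2\ell-c^2$ gives exactly $c^2(\ell-1)>\ell^2$, and one checks that $\ell^2/(\ell-1)\le 2\ell$ for $\ell\ge2$, so this last case is consistent with the threshold $c^2>\ell^2/(\ell-1)$. Together with the $S_-$ bound, this proves part 2 for every $\delta$ satisfying \eqref{delta ineq weight}.

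\emph{Part 1.} Suppose $\ell>2$ and $c^2\le\ell^2/(\ell-1)$. Then $\ell/c\ge\delta_+$, and the $O(\varepsilon)$ correction gives $\delta_\varepsilon>\ell/c\ge\delta_+$ strictly, even in the boundary case of equality. Hence for every $\delta>0$ at least one of $\delta^2-c\delta+1$ or $\varepsilon\delta^2-c\delta+\ell$ is strictly positive. The corresponding real point $\tau=0$ of $S_+$ then lies in $\sigma_e$ with positive real part, so $\sup\Re\sigma_e(L_\omega)>0$. Instability in $C_\omega$ follows from Henry \cite{Henry}: essential spectrum in the open right half plane implies linear, and hence nonlinear, instability.
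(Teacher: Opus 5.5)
Your proposal follows the paper's proof essentially step for step: the explicit Fredholm borders $S_\pm$, the $s=O(\varepsilon)$ discriminant bound for $S_-$, the $\tau=0$ maxima of the two parabolas forming $S_+$, and the comparison of $\delta_\varepsilon$ with $\delta_+$ (the paper's $\delta_1,\delta_2$). Your algebra showing that $\delta_\varepsilon<\delta_+$ is equivalent to $c^2>\ell^2/(\ell-1)$ when $\ell>2$, together with $\delta_\varepsilon=2\ell/(c+\sqrt{c^2-4\varepsilon\ell})>\ell/c\ge\delta_+$ for the strict inequality in part 1, supplies details the paper only asserts.

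One minor correction. Write $a=-\tau^2-u^*_\varepsilon$ and $b=-\varepsilon\tau^2-\ell v^*_\varepsilon$ for the two real diagonal entries, and $p=rs\ell u^*_\varepsilon v^*_\varepsilon$ for the coupling. The root shift on $S_-$ is not $O(\varepsilon/(1+\tau^2))$ uniformly in $\tau$. When $\ell v^*_\varepsilon>u^*_\varepsilon$, $a=b$ at $\tau^2=(\ell v^*_\varepsilon-u^*_\varepsilon)/(1-\varepsilon)$, and there the shift is of order $\sqrt{\varepsilon}$. However, $\sqrt{(a-b)^2+4p}\le|a-b|+2\sqrt{p}$ still gives $\sup\Re S_-\le-\min(u^*_\varepsilon,\ell v^*_\varepsilon)+O(\sqrt{\varepsilon})<0$, so your conclusion stands.
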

\begin{proof} 
First, we show that $S_-$ lies in the left half of $\mathbb C$. Indeed, by \eqref{eq:Fredholm-S-pm}-\eqref{eq:M-R-B-pm}, the real part of $\lambda(\tau)\in S_-$ is given by
\[\Re \lambda(\tau)=\frac12\left(\alpha(\tau)\pm\sqrt{\Delta(\tau)}\right),\]
where
\[
\alpha(\tau)=-u^*_\varepsilon-\ell v^*_\varepsilon-(1+\varepsilon)\tau^2<0,\ 
\Delta(\tau)=\left(-u^*_\varepsilon+\ell v^*_\varepsilon-(1-\varepsilon)\tau^2\right)^2+4\ell r s u^*_\varepsilon v^*_\varepsilon.
\]
As $s=O(\varepsilon)$, we have 
\[ \sqrt{\Delta(\tau)}+\alpha(\tau) < -u^*_\varepsilon-\ell v^*_\varepsilon-(1-\varepsilon)\tau^2+
\left| -u^*_\varepsilon+\ell v^*_\varepsilon-(1-\varepsilon)\tau^2 \right|<0,\ \forall\tau\in\mathbb R,\] 
which implies $\sup\Re S_-<0$, for $\varepsilon\approx 0$. \\

\noindent On the other hand, since $M_\varepsilon^+$, $R_\varepsilon^+$ and $B_\varepsilon^+$ are diagonal, by \eqref{eq:Fredholm-S-pm}-\eqref{eq:M-R-B-pm}  the two branches of $\lambda(\tau)\in S_+$ are  given by
\[
\lambda(\tau)=-\tau^2+i\tau(c-2\delta)+\delta^2-c\delta+1,\ \mbox{ or }  -\varepsilon\tau^2+i\tau(c-2\varepsilon\delta)+\varepsilon\delta^2-c\delta+\ell.
\]
Taking the real parts of the above,
one can see that  for $\lambda(\tau)\in S_+$, 
 \[\max\limits_\tau \Re \lambda(\tau)=\max\{\delta ^2-c\delta +1, \varepsilon \delta ^2-c\delta +\ell \}.\]
Hence, $\sup\Re S_+<0$ for  $\delta>0$ satisfying \eqref{delta ineq weight}, 
i.e.,  $\delta\in (\delta_1,\delta_2)$, where $\delta_1$ and  $\delta_2$ are given by
\[\delta_1=\frac{c-\sqrt{c^2-4\varepsilon \ell }}{2\varepsilon }, \ \ \delta_2=\frac{c+\sqrt{c^2-4}}2. \]
Since 
\begin{equation*}
\delta_1=\frac{c-\sqrt{c^2-4\varepsilon \ell }}{2\varepsilon }=\frac{\ell }{c}+%
\frac{\varepsilon \ell ^2}{c^3}+O( \varepsilon ^2),
\end{equation*}%
we see that 
\[\begin{cases}
(\delta_1,\delta_2)\ne\ \emptyset,&  \mbox{if}\ \ell \leq 2; \mbox{ or }  \ell >2 \mbox{ and } \frac{\ell ^2}{\ell -1}<c^2,\smallskip\\
(\delta_1,\delta_2)=\ \emptyset, &  \mbox{if}\  \ell >2 \mbox{ and } c^2\leq \frac{\ell ^2}{\ell -1}.
\end{cases}
\] 
The proof is finished.
\end{proof}
\begin{remark}
The $(e_4$,$e_1$)-wave in Proposition \ref{essential spectrum weighted}(i) is also called absolutely unstable (see Sandstede and A. Scheel \cite{SandScheel}) which can be explained from the aspect of the absolute spectrum (see Kapitula and Promislow \cite{KapitulaPro}). Indeed, from the characteristic equation of $A_{+}\left( \lambda ,\varepsilon \right) $ in \eqref{char eqn of A+}, one can find that up to O($\varepsilon$) order, the absolute spectrum $\sigma _{\text{abs}}^{+}$ with respect to $A_{+}\left( \lambda ,\varepsilon \right)$ is
\begin{equation*}
\sigma _{\text{abs}}^{+}=\left\{ \lambda =\lambda _{1}+i\lambda
_{2}\left\vert \ \lambda _{2}^{2}=\frac{1}{c^{4}}\left( 2\left( \lambda
_{1}-\ell \right) +c^{2}\right) ^{2}\left( \lambda _{1}-\ell -c\sqrt{\ell -1}%
\right) \left( \lambda _{1}-\ell + c\sqrt{\ell -1}\right) \right. \right\} 
\end{equation*}
and the rightmost point of this curve is $\lambda _{1}=\ell -c\sqrt{\ell -1}$ which is nonnegative due to $\ell >2 \mbox{ and } c^2\leq \ell ^2(\ell -1)^{-1}$. Therefore $\sigma _{\text{abs}}^{+} \cap \left\{ \lambda :\text{Re}%
\lambda \geq 0\right\} \neq \emptyset $ and hence any weight $\delta>0$
cannot move the essential spectrum into the left half of $\mathbb C$.
\end{remark}

\section{Point spectrum}
In this section, we assume $\ell \leq 2$; or $\ell >2$ and $\ell ^2(\ell -1)^{-1}<c^2$ so that by Proposition \ref{essential spectrum weighted} the essential spectrum $\sigma _e\left( L_{\omega }\right)$ lies in the left half  of $\mathbb C$. Therefore we can select some small constant $k_0>0$ such that $\sup\Re \sigma _e\left( L_{\omega }\right) <-k_0$.
Throughout this section, let $\lambda\in \mathbb{C}_{k_0}:=\left \{ \lambda  |\  \Re\lambda > -k_{0}\right \}$.
The explicit form of the eigenvalue problem  $L_{\omega }\varphi =\lambda \varphi $
is given by
\begin{equation}\label{eq:eigen-problem-explicit}
\begin{aligned}
\phi_1^{\prime \prime }+m_1(z)\phi_1'+\left(
r_1(z)+b_{11}^{\varepsilon }(z)\right) \phi_1+b_{12}^{\varepsilon }(z)\phi
_2 &=\lambda \phi_1\\ 
\varepsilon \phi_2^{\prime \prime }+m_2^{\varepsilon }(z)\phi_2^{\prime
}+b_{21}^{\varepsilon }(z)\phi_1+\left( r_2^{\varepsilon
}(z)+b_{22}^{\varepsilon }(z)\right) \phi_2& =\lambda \phi_2,
\end{aligned}
\end{equation}
where $b^\varepsilon_{ij}(z)$ means the $(i,j)$ entry of $B_\varepsilon(z)$ in \eqref{phi D B}.\\

\noindent Let  $\psi=\partial _{z}\varphi $ and $y=\left( \varphi ,\psi \right)$  to write $L_{\omega}\varphi =\lambda \varphi $ 
as the first order system
\begin{equation}
\partial _{z}y=A\left( z;\lambda ,\varepsilon \right) y,
\label{y'=Ay}
\end{equation}%
where
\begin{equation*}
A\left( z;\lambda ,\varepsilon \right) =\left(
\begin{array}{cc}
0 & I \\
D_\varepsilon^{-1}\left( \lambda I-R_\varepsilon(z)-B_\varepsilon(z)\right) & -D_\varepsilon^{-1}M_\varepsilon(z)%
\end{array}%
\right).
\end{equation*}%
Let $A_{\pm }\left( \lambda ,\varepsilon \right) =A\left( \pm \infty;\lambda ,\varepsilon \right) $.

\begin{lemma} \label{number of matrix eigenvalues} 
For $\varepsilon \approx 0$ and $\lambda \in\mathbb{C}_{k_{0}}$, both the Morse index of $A_+\left( \lambda ,\varepsilon \right)$ and  $A_-\left( \lambda ,\varepsilon \right)$ are two.
\end{lemma}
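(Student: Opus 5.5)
Since $A_\pm(\lambda,\varepsilon)$ are constant $4\times 4$ matrices, the plan is to compute their characteristic polynomials explicitly and count the roots $\mu$ with $\Re\mu>0$ (the Morse index), for $\lambda\in\mathbb C_{k_0}$. By the structure of $A$, $\mu$ is an eigenvalue of $A_\pm$ exactly when
\[
\det\left(\mu^2D_\varepsilon+\mu M^\pm_\varepsilon+R^\pm_\varepsilon+B^\pm_\varepsilon-\lambda I\right)=0 .
\]
No root can have $\Re\mu=0$ when $\lambda\in\mathbb C_{k_0}$. Indeed, $\mu=i\tau$ with $\tau\in\mathbb R$ gives precisely the Fredholm borders $S_\pm$ of \eqref{eq:Fredholm-S-pm}, and $\sup\Re S_\pm<-k_0$ by Proposition \ref{essential spectrum weighted} and the choice of $k_0$. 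Hence the number of roots with $\Re\mu>0$ is constant on the connected set $\mathbb C_{k_0}$. For $\varepsilon>0$ the polynomial also has fixed degree $4$ (leading coefficient $\varepsilon$), so roots cannot escape to infinity. It therefore suffices to count at one convenient $\lambda$, say $\lambda$ real and large, or $\lambda=0$.

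At $+\infty$ the matrices are diagonal, so the equation factors into two scalar quadratics:
\[
\mu^2+(c-2\delta)\mu+\delta^2-c\delta+1-\lambda=0,\qquad \varepsilon\mu^2+(c-2\varepsilon\delta)\mu+\varepsilon\delta^2-c\delta+\ell-\lambda=0 .
\]
Substituting $\mu=\nu-\delta$ turns them into $\nu^2+c\nu+1-\lambda=0$ and $\varepsilon\nu^2+c\nu+\ell-\lambda=0$. For real $\lambda$ large, each quadratic has one root of each sign: the product of the roots is $\delta^2-c\delta+1-\lambda<0$, respectively $(\varepsilon\delta^2-c\delta+\ell-\lambda)/\varepsilon<0$. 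So each contributes exactly one root with $\Re\mu>0$, and the Morse index of $A_+$ is $2$. (In the second quadratic one root is $O(1/\varepsilon)$ and negative, the fast direction; the other is $O(1)$.)

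At $-\infty$ we have $M^-=cI$ and $R^-=0$. Take $\lambda=0$ and use $s=O(\varepsilon)$. The determinant equals
\[
\left(\mu^2+c\mu-u^*_\varepsilon\right)\left(\varepsilon\mu^2+c\mu-\ell v^*_\varepsilon\right)-r s\ell u^*_\varepsilon v^*_\varepsilon ,
\]
which is $q_\varepsilon(\mu)p_\varepsilon(\mu)-\ell r s u^*_\varepsilon v^*_\varepsilon=F(\mu)$, the function from the monotonicity lemma. There it was shown that $F$ has exactly two positive roots. The remaining roots are one near the negative root of $q_\varepsilon$ and one near the root $-c/\varepsilon$ of $p_\varepsilon$. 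Both are negative for small $\varepsilon$ and $s$, since the coupling term is $O(\varepsilon)$ and the roots of $p_\varepsilon q_\varepsilon$ are simple and real. Thus the Morse index of $A_-$ at $\lambda=0$ is $2$, and by the homotopy argument it is $2$ on all of $\mathbb C_{k_0}$.

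The main obstacle is the singular nature of the problem as $\varepsilon\to0$: one eigenvalue of each $A_\pm$ is of size $O(1/\varepsilon)$. The continuity argument must therefore be run at fixed small $\varepsilon>0$, where the degree is $4$, rather than by perturbing from $\varepsilon=0$. The other point to verify carefully is that, for $\lambda$ on the imaginary-axis crossing condition, the sets $S_\pm$ at this fixed $\varepsilon$ really lie to the left of $-k_0$. This holds uniformly for small $\varepsilon$ by the explicit computation in Proposition \ref{essential spectrum weighted}.
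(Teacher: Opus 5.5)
Your argument is correct. For $A_+$ it is the paper's argument: split the characteristic equation into the two scalar quadratics and read off one root of each sign from the product of the roots at large real $\lambda$.

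The two routes differ at $-\infty$. The paper never computes the index of $A_-$. It invokes Kapitula--Promislow for the fact that on $\mathbb{C}_{k_0}$, which lies to the right of $\sigma_e(L_\omega)$, the Morse indices of $A_\pm$ are constant and equal (Fredholm index zero), and it transfers the value $2$ from $A_+$.

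You instead prove constancy directly. There is no root $\mu=i\tau$ because $\lambda\notin S_\pm$, and the roots of the degree-four characteristic polynomial move continuously at fixed $\varepsilon>0$. You then compute the index of $A_-$ at $\lambda=0$, where the characteristic function is exactly the quartic $F$ from the monotonicity lemma.

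The paper's route is shorter but rests on general Fredholm theory. Yours is self-contained and uses only the location of the borders $S_\pm$ from Proposition \ref{essential spectrum weighted}, at the price of one explicit computation.

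Two small corrections.
\begin{enumerate}
\item The shift that turns the $+\infty$ quadratics into $\nu^2+c\nu+1-\lambda$ and $\varepsilon\nu^2+c\nu+\ell-\lambda$ is $\mu=\nu+\delta$, not $\mu=\nu-\delta$. This is harmless, since you only use the product of the roots.
\item The monotonicity lemma only gives \emph{at least} two positive roots of $F$, so exactness has to come from your count of the negative roots. That count is cleaner by sign changes than by perturbation. Your perturbation step also claims that all roots of $p_\varepsilon q_\varepsilon$ are simple, which can fail when $\alpha^+_\varepsilon=\beta^+_\varepsilon$, although you only need simplicity of the negative ones. Let $\alpha^-_\varepsilon<\beta^-_\varepsilon<0$ be the negative roots of $p_\varepsilon$ and $q_\varepsilon$. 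Then
\[
F(\alpha^-_\varepsilon)=F(\beta^-_\varepsilon)=-\ell rsu^*_\varepsilon v^*_\varepsilon\le 0<F(0),
\]
and $F(\mu)\to+\infty$ as $\mu\to-\infty$. This gives a root in $(-\infty,\alpha^-_\varepsilon]$ and another in $[\beta^-_\varepsilon,0)$. Hence $F$ has exactly two positive roots.
\end{enumerate}
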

\begin{proof}
The matrix eigenvalues of $A_+\left( \lambda ,\varepsilon \right) $ are roots of
\begin{equation}\label{char eqn of A+}
\mu^2+(c-2\delta)\mu+(\delta^2-c\delta)+1-\lambda=0, \ \mbox { or }\ \varepsilon\mu^2+(c-2\varepsilon\delta)\mu+(\delta^2\varepsilon-c\delta)+\ell-\lambda=0.
\end{equation}
For $\lambda\in \mathbb C_{k_0}\cap \mathbb R$ and $\lambda$ is large, both of the above equations have one positive root and one negative root, that is, the Morse index of $A_+\left( \lambda ,\varepsilon \right) $ is two.
According to Kapitula and Promislow \cite{KapitulaPro}, for $\lambda \in\mathbb{C}_{k_{0}}$ the Morse index of $A_+\left( \lambda ,\varepsilon \right) $ and $A_+\left( \lambda ,\varepsilon \right) $ are equal constant. This 
implies  that $A_-\left(
\lambda ,\varepsilon \right) $ and $A_+\left( \lambda ,\varepsilon \right) $ have the same Morse index of two for $\lambda \in\mathbb{C}_{k_{0}}$. 
\end{proof}

\subsection{Fundamental solutions}
Let's define the  linear spaces  $U_{\lambda }^-$ and  $S_{\lambda }^-$ of $C_0\times C_0$ for $\lambda\in \mathbb C_{k_0}$, by 
\begin{align*}
U_\lambda^- &:=\left \{  y(\cdot,\lambda)\in C_0\times C_0|\ y(z,\lambda) \mbox{ is a solution of \eqref{y'=Ay} with }\lim_{z\to
-\infty }y\left( z,\lambda\right) =0\right \},\\
S_\lambda^+ &:=\left \{ y(\cdot,\lambda)\in C_0\times C_0|\ y(z,\lambda) \mbox{ is a solution of \eqref{y'=Ay} with }\lim_{z\to
+\infty }y\left( z,\lambda \right) =0\right \}.
\end{align*}
Then, $\lambda \in \sigma _{p}\left( L_{\omega }\right) $ if and only if $%
U_{\lambda }^-\cap S_{\lambda }^+$ is non-trivial. By Lemma \ref{number
of matrix eigenvalues}, $\dim U_{\lambda }^-=\dim S_{\lambda }^+=2$, so  let 
$\left \{ y_1^-\left(z ,\lambda \right) ,y_2^-\left(z ,\lambda \right) \right \} $ be a basis of $U_{\lambda }^-$ and 
 $\left \{y_1^+\left( z,\lambda \right) ,y_2^+\left(z ,\lambda \right)\right \} $ be a basis of $S_{\lambda }^+$.  The Evans function is
defined by
\begin{equation*}
E\left( \lambda \right) =\det \left( y_1^-\left(
0,\lambda \right) ,y_2^-(0,\lambda),y_1^+\left(
0,\lambda \right) ,y_2^+(0,\lambda)\right),\ \lambda\in C_{k_0}.
\end{equation*}%
According to Kapitula and Promislow \cite{KapitulaPro},  $E(\lambda)$ is analytic and its zeros coincides with $\sigma_p(L_\omega)$, counting multiplicity.  We use the geometric singular perturbation theory to find  $y^\pm_i(z,\lambda)$. 
First, introducing the variable $\tau$ such that 
\begin{equation*}
z=z(\tau) =\frac1{2\kappa }\ln \left( \frac{1+\tau }{1-\tau }\right)
\end{equation*}
and write \eqref{y'=Ay} as the autonomous system for the augmented variable $Y=(y,\tau)$, given by
\begin{equation}
\left \{
\begin{array}{l}
\partial _{z}y=A\left( \tau ;\lambda ,\varepsilon \right) y\text{,\smallskip
} \\
\partial _{z}\tau =\kappa \left( 1-\tau ^2\right)\ (\tau(0)=0), %
\end{array}%
\right.  \label{y tau system}
\end{equation}%
where%
\begin{equation*}
A\left( \tau ;\lambda ,\varepsilon \right) =\left \{
\begin{array}{ll}
A\left( z(\tau) ;\lambda ,\varepsilon \right), \smallskip &
\text{for }\tau \neq \pm 1 \\
A_{\pm }\left( \lambda ,\varepsilon \right), & \text{for }\tau =\pm 1.%
\end{array}%
\right.
\end{equation*}
Here we also assume that $\kappa$ is sufficiently small such that the vector field \eqref{y tau system} to be $C^1$ (for the proof, see Alexander et al. \cite{AGJ}). Denote the equilibrium of   \eqref{y tau system} by $Y_{\pm 1}=(0,0,0,0,\pm 1)$.
Let's write  \eqref{y tau system}  as the first order system
\begin{equation}
\left \{
\begin{array}{l}
\phi_1'=\psi _1  \smallskip\\
\phi_2'=\psi _2  \smallskip \\
\psi _1'=\big(\lambda-r_1(\tau)-b^\varepsilon_{11}(\tau))\big)\phi_1-b^\varepsilon_{12}(\tau)\phi_2-m_1(\tau)\psi_1  \smallskip\\
\varepsilon \psi _2'=-b^\varepsilon_{21}(\tau)\phi_1+\big(\lambda-r^\varepsilon_2(\tau) -b^\varepsilon_{22}(\tau)\big)\phi_2-m^\varepsilon_2(\tau) \psi_2  \smallskip\\
\tau'=\kappa(1-\tau^2),
\end{array}
\right. 
\tag*{$(ES_\varepsilon)$} \label{slow eigen system}
\end{equation}
which is called the {\em slow eigenvalue system}.  
\noindent In terms of  the fast variable $\xi:=z/\varepsilon$,  using $\dot{(~)}=d(~)/d\xi$,  \ref{slow eigen system} is equivalent to  the {\em fast eigenvalue system} \ref{fast eigen system},
\begin{equation}\label{fast eigen system}
\left \{
\begin{array}{l}
\dot{\phi}_1=\varepsilon \psi _1  \smallskip\\
\dot{\phi}_2=\varepsilon \psi _2  \smallskip\\
\dot{\psi}_1
 =\varepsilon \left( (\lambda-r_1(\tau)-b^\varepsilon_{11}(\tau)) \phi_1-b^\varepsilon_{12}(\tau)\phi_2-m_1(\tau)\psi_1\right)\ \smallskip\\
\dot{\psi}_2
=-b^\varepsilon_{21}(\tau)\phi_1+\left(\lambda-r^\varepsilon_2(\tau) -b^\varepsilon_{22}(\tau)\right)\phi_2-m^\varepsilon_2(\tau) \psi_2  \smallskip\\
\dot{\tau}=\varepsilon \kappa \left( 1-\tau ^2\right).%
\end{array}%
\right. 
\tag*{$(EF_\varepsilon)$} 
\end{equation}%

\noindent Taking $\varepsilon=0$ (then $b_{21}^0(\tau)=0$ by $s=O(\varepsilon)$,  $r^0_2=-cg_1(\tau)$ and $m^0_2=c$) for \ref{slow eigen system}  leads to  the {\em reduced eigenvalue system} \ref{slow reduced}%
\begin{equation}
\left \{
\begin{array}{l}
\phi_1'=\psi _1  \smallskip\\
\phi_2'=c^{-1}a_2^0(\tau,\lambda)\phi_2 \smallskip\\
\psi _1'=a_1^0(\tau,\lambda)\phi_1-b^0_{12}(\tau)\phi_2-m_1(\tau)\psi_1  \smallskip\\
0=a_2^0(\tau,\lambda)\phi_2-c \psi_2  \smallskip\\
\tau '=\kappa \left( 1-\tau ^2\right) ,%
\end{array}%
\right. 
\tag*{$(ES_0)$} \label{slow reduced}
\end{equation}
and \noindent taking $\varepsilon=0$ for \ref{fast eigen system}  leads to  the {\em layer eignvalue system} \ref{fast reduced}
\begin{equation}
\left \{
\begin{array}{l}
\dot{\phi}_1=0\\\smallskip
\dot{\phi}_2=0 \\\smallskip
\dot{\psi}_1=0 \\\smallskip
\dot{\psi}_2=a_2^0(\tau,\lambda) \phi_2-c \psi _2 \\\smallskip
\dot{\tau}=0,%
\end{array}%
\right. 
\tag*{$(EF_0)$}  \label{fast reduced}
\end{equation}%
where
\begin{align*}
a^\varepsilon_1(\tau,\lambda) &=\lambda-r_1(\tau)-b^\varepsilon_{11}(\tau) \\
a^\varepsilon_2(\tau,\lambda) &=\lambda-r^\varepsilon_2(\tau) -b^\varepsilon_{22}(\tau).
\end{align*}
 The reduced eigenvalue system \ref{slow reduced} has  a 4-dimensional slow manifold $\mathscr M$, given by
\begin{equation}
\mathscr M=\left \{ \left( \phi_1,\phi_2,\psi _1,\psi _2,\tau
\right) \left |\  \psi _2= H(\phi_2,\tau):=c^{-1}a^0_2 (\tau,\lambda) \phi_2,\ 
\phi_1,\phi_2,\psi _1\in \mathbb{C}
,\tau \in [-1,1]\right. \right \},  \label{M_0}
\end{equation}%
which is also  the equilibrium of \ref{fast reduced}. The linearization of \ref{fast reduced} around $(\phi_1,\phi_2,\psi_1,\psi_2,\tau)\in\mathscr M$ is given by
\begin{equation}
\left(
\begin{array}{ccccc}
0 & 0 & 0 & 0 & 0 \\
0 & 0 & 0 & 0 & 0 \\
0 & 0 & 0 & 0 & 0 \smallskip\\
0 & a^0_2(\tau,\lambda)  & 0 &-c  & \partial _{\tau }a^0_2(\tau,\lambda) \phi_2  \smallskip \\
0 & 0 & 0 & 0 & 0%
\end{array}%
\right),  \label{Jacobian at p of fast slow system}
\end{equation}%
which has  eigenvalue $0$ with multiplicity four  (corresponding to the tangential space of $\mathscr M$), and $-c<0 $. Hence,  $\mathscr M$ is normally hyperbolic, which implies that \ref{slow eigen system} has a normally hyperbolic invariant manifold $\mathscr M_\varepsilon $ for $\varepsilon\approx 0$, which is $O(\varepsilon)$-closed to $\mathscr M$, given by 
\begin{equation}
\mathscr M_\varepsilon=\big \{ \left( \phi_1,\phi_2,\psi _1,\psi _2,\tau
\right) \left |\  \psi _2= H_\varepsilon(\phi_1,\phi_2,\psi_1,\tau),\ 
\phi_1,\phi_2,\psi _1\in \mathbb{C}
,\tau \in [-1,1]\right. \big \},
\end{equation}
where 
\[ H_\varepsilon\left(\phi_1,\phi_2,\psi_1,\tau\right)=H(\phi_2,\tau)+O(\varepsilon).\]

For the flow  of  \ref{slow reduced} restricted on $\mathscr M$,  let's denote $\mathscr W^s(Y_{+1})$  the stable  manifold  of $Y_{+1}$,  and denote $\mathscr W^u(Y_{-1})$ the  unstable  manifold  of $Y_{-1}$.  It's clear that both $\mathscr W^s(Y_{+1})$ and $\mathscr W^u(Y_{-1})$ contain the 1-dimensional submanifold
\[
\mathscr T:=\{(0,0,0,0)\}\times \{\tau\in(-1,1)\},
\]
which corresponds to the trivial solutions of  \eqref{y'=Ay}.

  \begin{lemma}\label{lemma:fund-sol-on-slow-0}
Let $\lambda\in C_{k_0}$. 
$\mathscr W^u(Y_{-1})$  is 3-dimensional, and  $\mathscr W^s(Y_{+1})$ of is 2-dimensional. Furthermore, 
 \begin{enumerate}
 \item $\mathscr W^u(Y_{-1})$ has solutions $Y^-(z,\lambda)$ and $\hat Y^-(z,\lambda)$ to \ref{slow reduced}, of the from 
 \begin{align*}
 Y^-(z,\lambda)&=(\phi^-_1(z,\lambda),0,\psi^-_1(z,\lambda),0,\tau(z) )\\
 \hat Y^-(z,\lambda)&=(\hat\phi^-_1(z,\lambda),\hat\phi^-_2(z,\lambda),\hat\psi^-_1(z,\lambda),\hat\psi^-_2(z,\lambda),\tau(z)),
 \end{align*} 
 where $(\hat\phi^-_2(z,\lambda),\hat\psi^-_2(z,\lambda))$ is nontrivial;
\item  $\mathscr W^s(Y_{+1})$ has a solution $Y^+(z,\lambda)$  to \ref{slow reduced}, of the form
\[ Y^+(z,\lambda)=(\phi^+_1,0,\psi^+_1,0,\tau(z)).\] 
 \end{enumerate}

 \end{lemma}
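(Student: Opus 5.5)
The plan is to reduce the flow of \ref{slow reduced} on $\mathscr M$ to a triangular linear system in $(\phi_1,\psi_1,\phi_2)$ driven by $\tau$, and to count the unstable and stable directions at $Y_{\mp1}$ from the eigenvalues of the limiting matrices. On $\mathscr M$ the flow is
\begin{equation*}
\phi_2'=c^{-1}a_2^0(\tau,\lambda)\phi_2,\qquad
\begin{pmatrix}\phi_1\\ \psi_1\end{pmatrix}'=\begin{pmatrix}0&1\\ a_1^0(\tau,\lambda)&-m_1(\tau)\end{pmatrix}\begin{pmatrix}\phi_1\\ \psi_1\end{pmatrix}+\begin{pmatrix}0\\ -b_{12}^0(\tau)\phi_2\end{pmatrix},\qquad \tau'=\kappa(1-\tau^2).
\end{equation*}
The $\phi_2$-equation decouples, and the $(\phi_1,\psi_1)$-block is forced only by $\phi_2$.

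First, I compute the linearization at $Y_{\pm1}$. In the $\tau$ direction the eigenvalue is $\mp2\kappa$, which is unstable at $Y_{-1}$ and stable at $Y_{+1}$.

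At $\tau=-1$ we have $g_1=g_2=0$, $m_1=c$, $r_1=0$ and $b_{11}^0=-(1-r)$. The $(\phi_1,\psi_1)$ block has characteristic polynomial $\mu^2+c\mu-(1-r)-\lambda=0$. For $\lambda\in\mathbb C_{k_0}$ it has exactly one root with positive real part, by continuity from large real $\lambda$: no root is purely imaginary because the corresponding Fredholm curve lies in $\Re\lambda<-k_0$. In the $\phi_2$ direction, $a_2^0(-1,\lambda)=\lambda+\ell$, so the eigenvalue is $(\lambda+\ell)/c$, whose real part is positive for small $k_0$.

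At $\tau=+1$ we have $b_{11}^0=1$, $b_{22}^0=\ell$, $m_1=c-2\delta$ and $r_1=\delta^2-c\delta$. The block equation is $\mu^2+(c-2\delta)\mu+\delta^2-c\delta+1-\lambda=0$, which has one stable and one unstable root by the argument of Lemma \ref{number of matrix eigenvalues}. The $\phi_2$ eigenvalue is $c^{-1}(\lambda+c\delta-\ell)$. Its real part is positive because $\delta>\delta_1\approx \ell/c$ by \eqref{delta ineq weight}, after shrinking $k_0$ if necessary.

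This gives $\dim\mathscr W^u(Y_{-1})=1+1+1=3$, counting $\tau$, and $\dim\mathscr W^s(Y_{+1})=1+1=2$, counting $\tau$ and one direction in $(\phi_1,\psi_1)$. Since the system is linear in $(\phi,\psi)$ over a base flow in $\tau$, these invariant manifolds are the fibre bundles of solutions decaying as $z\to\mp\infty$ over the orbit $\mathscr T$. This is the standard Alexander--Gardner--Jones viewpoint.

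Second, I produce the special solutions. The subspace $\{\phi_2=\psi_2=0\}$ is invariant because of the triangular structure. Restricted to it, the $(\phi_1,\psi_1)$ system at $\tau=\mp1$ has one unstable and one stable direction, so there are solutions $Y^-$ decaying at $-\infty$ and $Y^+$ decaying at $+\infty$, both with $\phi_2\equiv\psi_2\equiv0$. Since $\mathscr W^s(Y_{+1})$ is only 2-dimensional and $\phi_2$ is unstable at $Y_{+1}$, every solution in it has $\phi_2=0$, which is consistent with item 2.

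For $\hat Y^-$, I take $\hat\phi_2^-(z)=\exp\bigl(c^{-1}\int_0^z a_2^0\bigr)$, which decays as $z\to-\infty$ since $\Re(\lambda+\ell)>0$, and set $\hat\psi_2^-=c^{-1}a_2^0\hat\phi_2^-$. I then solve the inhomogeneous $(\phi_1,\psi_1)$ equation by variation of constants with an exponential dichotomy on $(-\infty,0]$, choosing the particular solution that decays at $-\infty$. This works because the forcing $b_{12}^0\hat\phi_2^-$ decays at the rate $\Re(\lambda+\ell)/c$, and that rate is not resonant with the dichotomy rates. If resonance were to occur for some $\lambda$, a polynomial factor would appear but decay would still hold.

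The main obstacle I expect is verifying the spectral gap and sign conditions uniformly for $\lambda\in\mathbb C_{k_0}$: no purely imaginary eigenvalues of the $(\phi_1,\psi_1)$ blocks, and the sign of $\Re(\lambda+c\delta-\ell)$. I would settle these directly from the Fredholm curves $S_\pm$ computed in Proposition \ref{essential spectrum weighted}, together with the choice of $\delta$ in \eqref{delta ineq weight}.
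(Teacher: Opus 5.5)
Your proposal is correct. For the dimension counts and for $Y^\pm$ it follows the paper's argument:
\begin{itemize}
\item the same triangular structure of the flow on $\mathscr M$;
\item the same eigenvalues at $Y_{\pm1}$, namely $\mp2\kappa$, $c^{-1}a_2^0(\pm1,\lambda)$, and those of $Q_1(\pm1,\lambda)$;
\item the same use of the invariant set $\mathscr M_1=\mathscr M\cap\{\phi_2=\psi_2=0\}$.
\end{itemize}
One minor difference concerns the one-stable/one-unstable splitting of $Q_1(\pm1,\lambda)$. The paper reads off signs from the real part of the characteristic equation. You instead use continuity from large real $\lambda$, plus the fact that a root on $i\mathbb R$ would force $\Re\lambda\le-(1-r)$, resp.\ $\Re\lambda\le\delta^2-c\delta+1$. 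Both work.

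The genuine difference is how you obtain $\hat Y^-$. The paper gets it by counting: $\mathscr W^u(Y_{-1})$ is $3$-dimensional, whereas $\mathscr W^u(Y_{-1})\cap\mathscr M_1$ is only $2$-dimensional, so some orbit must have nontrivial $(\phi_2,\psi_2)$. You build it explicitly. The decoupled scalar equation gives $\hat\phi_2^-(z)=\exp\bigl(c^{-1}\int_0^z a_2^0\bigr)$, and you then solve the forced $(\phi_1,\psi_1)$ equation with an exponential dichotomy on $(-\infty,0]$.

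The counting route is shorter and avoids the variation-of-constants estimates and the resonance case. Your route costs a little more, but it shows that $\hat\phi_2^-$ never vanishes; indeed $\hat\phi_2^-(0,\lambda)=1$. That is exactly the fact $\hat\phi^-_2(0,\lambda)\neq0$ the paper needs later to reduce $E_0$ to $E_1$, where it proves it by a separate invariance argument.

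Your extra observation is also correct and slightly sharpens item 2: since $\Re a_2^0(+1,\lambda)>0$, every orbit in $\mathscr W^s(Y_{+1})$ has $\phi_2\equiv0$.
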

\begin{proof} 
The flow of \ref{slow reduced} restricted on $\mathscr M$ is governed by the equation for $(\phi_1,\phi_2,\psi_1,\tau)$, given by
\begin{equation}\label{eq:slow-reduced}
\left \{
\begin{array}{l}
\phi_1'=\psi _1\smallskip\\
\psi _1'=a_1^0(\tau,\lambda) \phi_1-b^0_{12}(\tau)\phi_2-m_1(\tau)\psi _1\smallskip\\
\phi_2'=c^{-1}a_2^0(\tau,\lambda)\phi_2\smallskip\\
\tau '=\kappa \left( 1-\tau ^2\right),%
\end{array}%
\right. 
\end{equation}
whose linearization is the upper triangular block matrix $Q(\tau,\lambda)$ given by
\[ Q(\tau,\lambda):=\left(
\begin{matrix} Q_1(\tau,\lambda)_{2\times 2} &*_{2\times 1}  & *_{2\times 1} \smallskip\\ 
0_{1\times 2} & c^{-1}a^0_2(\tau,\lambda) &  *_{1\times 1} \smallskip\\
0_{1\times 2} & 0_{1\times 1} &-2k\tau \end{matrix}\right),\ \ \mbox{where}\ 
Q_1(\tau,\lambda):=\left(
\begin{matrix}
0 & 1  \smallskip\\ 
a_1^0(\tau,\lambda)&- m_1(\tau)  
\end{matrix}
\right).
\]
 At $\tau= +1$, the eigenvalues of  $Q(+1,\lambda)$ consist of $-2\kappa<0$, $c^{-1}a^0_2(+1,\lambda)$ (with 
 \begin{equation*}
     \Re c^{-1}a^0_2(+1,\lambda)= c^{-1}(\Re \lambda+c\delta-\ell)>0
 \end{equation*}
by \eqref{delta ineq weight} and $\lambda\in C_{k_0}$),  and the eigenvalues of $Q_1(+1,\lambda)$. As  $Q_1(+1,\lambda)$ is given by
\[Q_1(+1,\lambda)=\left(\begin{matrix}
 0  &1  \\
 \lambda-(\delta^2-c\delta+1) & -(c-2\delta) 
\end{matrix}\right), 
 \]
its eigenvalues  $\nu$ satisfies
\[ \nu^2+(c-2\delta)\nu-[\lambda-(\delta^2-c\delta+1)]=0.\] 
Extracting the equation for $\Re\nu$  gives
\[ (\Re\nu)^2+(c-2 \delta)(\Re\nu)-[(\Im\nu)^2+\Re\lambda-(\delta^2-c\delta+1)]=0,\]
which has one positive root and one negative root since $\Re\lambda >\delta^2-c\delta+1$ by \eqref{delta ineq weight} and $\lambda\in C_{k_0}$. 
It follows that  $Q(+1,\lambda)$ has two eigenvalues with positive real parts and two eigenvalues with negative real parts, which implies the stable manifold $\mathscr W^s(Y_{+1})$ is 2-dimensional. \\

\noindent At $\tau= -1$, the eigenvalues of $Q(-1,\lambda)$ consist of $2\kappa>0$, $c^{-1}a^0_2(-1,\lambda)$ (with 
\begin{equation*}
    \Re c^{-1}a^0_2(-1,\lambda)= c^{-1}(\Re \lambda+\ell )>0
\end{equation*}
by \eqref{delta ineq weight} and $\lambda\in C_{k_0}$) and the eigenvalues of $Q_1(-1,\lambda)$. As $Q_1(-1,\lambda)$ is given by 
\[ Q_1(-1,\lambda)=\left(\begin{matrix}
 0  &1  \\
 \lambda+u_0^* & -c 
\end{matrix}\right),
 \]
its eigenvalues  $\mu$ satisfying 
\[ \mu^2+c\mu-(\lambda+u_0^*)=0.\]
Extracting the equation for $\Re \mu$  gives
\[ (\Re\mu)^2+c\Re\mu-\left((\Im\mu)^2+\Re\lambda+u_0^*\right)=0,\]
which has one positive root and one negative root for $\lambda\in C_{k_0}$. It follows that $Q(-1,\lambda)$ has three eigenvalues with positive real parts and one eigenvalue with negative real part, which implies  the unstable manifold  $\mathscr W^u(Y_{-1})$ is  3-dimensional.\\

Obviously, the submanifold $\mathscr M_1:=\mathscr M\cap\{\phi_2=\psi_2=0\}$  of $\mathscr M$ is invariant under \ref{slow reduced}.  The flow of \ref{slow reduced} restricted on $\mathscr M_1$ is governed by the equation for $(\phi_1,\psi_1,\tau)$, given by
\begin{equation}\label{eq:slow-reduced-1}
\left \{
\begin{array}{l}
\phi_1'=\psi _1\smallskip\\
\psi _1'=a_1^0(\tau,\lambda) \phi_1-m_1(\tau)\psi _1\smallskip\\
\tau '=\kappa \left( 1-\tau ^2\right),%
\end{array}%
\right. 
\end{equation}
whose linearization of the first two equations is exactly $Q_1(\tau,\lambda)$.  As $Q_1(-1,\lambda)$ has one eigenvalue with positive real part and one eigenvalue with negative real part,  there is a solution $Y^-\in\mathscr W^u(Y_{-1})\cap \mathscr M_1 $, whose  $(\phi_2,\psi_2)$  component is trivial. Since $\mathscr W^u(Y_{-1})$ is 3-dimensional, it must possess a solution $\hat Y^-\in\mathscr M\backslash (\mathscr M_1\cup\mathscr T)$, whose  $(\phi_2,\psi_2)$  component is nontrivial. \\

Similarly, as $Q_1(+1,\lambda)$ has one eigenvalue with positive real part and one eigenvalue with negative real part,  there is a solution $Y^+\in\mathscr W^s(Y_{+1})\cap \mathscr M_1 $, whose  $(\phi_2,\psi_2)$  component is trivial.
\end{proof}

For the flow  of  \ref{slow eigen system} restricted on $\mathscr M_\varepsilon$,  let's denote $\mathscr W^s_\varepsilon(Y_{+1})$  the {\em local stable manifold  of $Y_{+1}$},  and denote $\mathscr W^u_\varepsilon(Y_{-1})$ the  {\em local unstable  manifold}  of $Y_{-1}$.  Note that the invariant manifold $\mathscr M_\varepsilon$ for \ref{slow eigen system} also contains $Y_{\pm}$ as its equilibria.  
\begin{lemma}\label{lemma:fund-solu-Y-on-slow}
Let $\lambda\in C_{k_0}$. For the flow  of  \ref{slow eigen system} restricted on $\mathscr M_\varepsilon$, we have
 \begin{enumerate}
 \item $\mathscr W^u_\varepsilon(Y_{-1})$ has solutions $Y^-_\varepsilon$ and $\hat Y^-_\varepsilon$ that are $O(\varepsilon)$-closed to $Y^-$ and $\hat Y^-$, respectively, 
\[Y^-_\varepsilon(z,\lambda)=Y^-(z,\lambda)+O(\varepsilon), \ \ \ \hat Y^-_\varepsilon(z,\lambda)=\hat Y^-(z,\lambda)+O(\varepsilon);\] \item  $\mathscr W^s_\varepsilon(Y_{+1})$ has a solution $Y^+_\varepsilon(z,\lambda)$ which is $O(\varepsilon)$-closed to $Y^+$,
\[Y^+_\varepsilon(z,\lambda)=Y^+(z,\lambda)+O(\varepsilon).\] 
 \end{enumerate}
\end{lemma}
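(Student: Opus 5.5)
The plan is to reduce the statement to a regular perturbation problem for the flow restricted to $\mathscr M_\varepsilon$, and then invoke the smooth dependence of stable and unstable manifolds of hyperbolic equilibria on parameters.

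\emph{Reduced flow on $\mathscr M_\varepsilon$.} By Fenichel's theorem, used as in the construction of $\mathscr M_\varepsilon$ above, the flow of \ref{slow eigen system} restricted to $\mathscr M_\varepsilon$ is obtained by substituting $\psi_2=H_\varepsilon(\phi_1,\phi_2,\psi_1,\tau)$ into the first, second, third and fifth equations. This gives a system in $(\phi_1,\phi_2,\psi_1,\tau)$ of the form
\[
(\phi_1,\phi_2,\psi_1,\tau)'=G_0(\phi_1,\phi_2,\psi_1,\tau;\lambda)+O(\varepsilon),
\]
where $G_0$ is the vector field of \eqref{eq:slow-reduced}. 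The perturbation is $C^1$-small, uniformly for $\tau\in[-1,1]$ and $\lambda$ in compact subsets of $\mathbb C_{k_0}$, because $H_\varepsilon=H+O(\varepsilon)$ in $C^1$ and the coefficients $b^\varepsilon_{ij}$, $r^\varepsilon_2$, $m^\varepsilon_2$ are $O(\varepsilon)$-close to their $\varepsilon=0$ values. For the $b^\varepsilon_{ij}$ this uses $(U_\varepsilon,V_\varepsilon)=(u_0,v_0)+O(\varepsilon)$ from Section 3 and $s=O(\varepsilon)$. Since $\mathscr M_\varepsilon$ is invariant and contains $Y_{\pm1}$, these points remain equilibria of the restricted flow.

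\emph{Persistence of hyperbolicity and manifold dimensions.} In the proof of Lemma \ref{lemma:fund-sol-on-slow-0}, the eigenvalues of $Q(\pm1,\lambda)$ were shown to have real parts bounded away from zero for $\lambda\in\mathbb C_{k_0}$. The linearization of the perturbed restricted flow at $Y_{\pm1}$ is $Q(\pm1,\lambda)+O(\varepsilon)$, so for $\varepsilon\approx0$ its eigenvalues keep the same splitting: three unstable and one stable at $Y_{-1}$, two stable and two unstable at $Y_{+1}$. The stable/unstable manifold theorem with parameters then yields $\mathscr W^u_\varepsilon(Y_{-1})$ of dimension $3$ and $\mathscr W^s_\varepsilon(Y_{+1})$ of dimension $2$. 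Both are $C^1$-close to $\mathscr W^u(Y_{-1})$ and $\mathscr W^s(Y_{+1})$, and they depend continuously (indeed smoothly) on $\varepsilon$.

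\emph{Selecting the specific solutions.} The manifolds above are linear in the $(\phi,\psi)$ variables over each $\tau$, because the system is linear. Parametrize each by initial data at $z=0$, i.e.\ at $\tau=0$: the fibre of $\mathscr W^u_\varepsilon(Y_{-1})$ over $\tau=0$ is a $2$-dimensional subspace $E^u_\varepsilon$, and the fibre of $\mathscr W^s_\varepsilon(Y_{+1})$ is $E^s_\varepsilon$, which has two complex dimensions in the original formulation. Each is $O(\varepsilon)$-close to the corresponding fibre at $\varepsilon=0$. Choose initial data in $E^u_\varepsilon$ by projecting the initial data of $Y^-(0,\lambda)$ and $\hat Y^-(0,\lambda)$ onto $E^u_\varepsilon$, for instance by writing $E^u_\varepsilon$ as the graph of an $O(\varepsilon)$ linear map over $E^u_0$. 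The resulting solutions $Y^-_\varepsilon,\hat Y^-_\varepsilon$ then have initial data $O(\varepsilon)$-close to those of $Y^-,\hat Y^-$. The same construction applied to $Y^+$ in $E^s_\varepsilon$ gives $Y^+_\varepsilon$.

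\emph{Main obstacle: uniformity on all of $\mathbb R$.} Closeness of initial data only gives $O(\varepsilon)$ closeness on compact $z$-intervals by Gronwall. To get it for all $z$, I would split $\mathbb R$ into a compact middle interval $[-Z,Z]$ and two tails. On the middle interval Gronwall suffices. On the tails, both the perturbed and unperturbed solutions lie in a small neighbourhood of $Y_{\mp1}$, where exponential dichotomies hold for the linear system uniformly in $\varepsilon$, with rates fixed by the spectral gap at $\tau=\pm1$. The roughness theorem for exponential dichotomies (equivalently, the smooth dependence of the local invariant manifolds, written in adapted coordinates) then bounds the difference by $C\varepsilon$ times a decaying exponential. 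Since the solutions converge to $0$ in the $(\phi,\psi)$ components, this bound holds uniformly in $z$. I expect this tail estimate to be the delicate point, and I would handle it with the dichotomy roughness argument of Coppel, as used in Alexander et al.\ \cite{AGJ}.
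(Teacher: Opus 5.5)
Your route is the paper's. The paper's proof is only a citation of Fenichel and Szmolyan for the persistence of $\mathscr W^u(Y_{-1})$ and $\mathscr W^s(Y_{+1})$ under the $O(\varepsilon)$ regular perturbation of the flow \eqref{eq:slow-reduced} on the slow manifold. Your three steps (reduction to $\mathscr M_\varepsilon$, persistence of hyperbolicity at $Y_{\pm1}$, projection of the $\tau=0$ data onto the perturbed fibres) make that citation explicit. The projection step alone already gives what is used later, namely $O(\varepsilon)$-closeness at $z=0$ in $E(\lambda)$. Two things in your write-up are wrong, though.

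\textbf{Fibre dimension.} The fibre of $\mathscr W^s_\varepsilon(Y_{+1})$ over $\tau=0$ has one complex dimension, not two. At $Y_{+1}$ the only stable eigenvalues of the restricted linearization are $-2\kappa$ and one root of $Q_1(+1,\lambda)$, since $\Re\, c^{-1}a^0_2(+1,\lambda)>0$. This agrees with your own count $\dim\mathscr W^s_\varepsilon(Y_{+1})=2$. It is also why the second element of $S^+_\lambda$ has to be found off $\mathscr M_\varepsilon$ in Lemma \ref{lem:tilde-Y+}. Your projection argument for $Y^+_\varepsilon$ is unaffected.

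\textbf{Uniformity on all of $\mathbb R$.} Your final step fails. It assumes the solutions tend to $0$ at both ends, but $Y^-,\hat Y^-$ decay only as $z\to-\infty$, and $Y^+$ only as $z\to+\infty$; on the opposite tail they grow. For example, $\hat\phi^-_2(z)=\hat\phi^-_2(0)\exp\bigl(c^{-1}\int_0^z a^0_2(\tau(s),\lambda)\,ds\bigr)$ grows like $e^{\mu_0 z}$ with $\mu_0=c^{-1}(\lambda+c\delta-\ell)$ and $\Re\mu_0>0$. The $\phi_2$-component of $\hat Y^-_\varepsilon$ instead grows at the slow root of the second equation in \eqref{char eqn of A+}, which is $\mu_0-\varepsilon(\lambda-\ell)^2/c^3+O(\varepsilon^2)$. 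Hence $|\hat Y^-_\varepsilon-\hat Y^-|$ is unbounded as $z\to+\infty$, for instance whenever $\lambda\neq\ell$.

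What your dichotomy-plus-Gronwall argument does prove, and what the lemma should be read as saying, is one-sided uniform closeness. It holds on $(-\infty,Z]$ for $Y^-_\varepsilon,\hat Y^-_\varepsilon$ and on $[-Z,+\infty)$ for $Y^+_\varepsilon$, using the dichotomy only on the half-line where the solution decays.
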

\begin{proof}
 According to  Fenichel \cite{Feni}, Szmolyan \cite{Sz} and Lemma \ref{lemma:fund-sol-on-slow-0}, 
for $\varepsilon \approx 0$, $\mathscr W^u_\varepsilon(Y_{-1})$  has orbits  $O(\varepsilon)$-closed to $Y^-$, $\hat Y^-\in \mathscr W^u(Y_{-1})$, respectively. Similarly, $\mathscr W^s_\varepsilon(Y_{+1})$  has an orbit  $O(\varepsilon)$-closed to $Y^+\in \mathscr W^s(Y_{+1})$.
\end{proof}

 Now we have two fundamental solutions in $U_{\lambda }^-$,  which come from the $(\phi_1.\phi_2,\psi_1,\psi_2)$ component of 
 $Y^-_\varepsilon$ and $\hat Y^-_\varepsilon$, given by
 \[ y^-_1(z,\lambda)=
 \left(\begin{matrix} \phi^-_1(z,\lambda)+O(\varepsilon) \\ O(\varepsilon)  \\ 
 \psi^-_1(z,\lambda)+O(\varepsilon)  \\ O(\varepsilon)  \\\end{matrix}\right),\ \ 
y^-_2(z,\lambda)=
 \left(\begin{matrix} \hat\phi^-_1(z,\lambda)+O(\varepsilon) \\ \hat\phi^-_2(z,\lambda)+O(\varepsilon) 
  \\ \hat\psi^-_1(z,\lambda)+O(\varepsilon)  \\ \hat\psi^-_2(z,\lambda)+O(\varepsilon)  \\\end{matrix}\right),  
 \]
 and one  fundamental solution in $S_{\lambda }^+$,  which comes from the $(\phi_1.\phi_2,\psi_1,\psi_2)$ component of 
 $Y^+_\varepsilon$, given by
 \[ y^+_1(z,\lambda)=
 \left(\begin{matrix} \phi^+_1(z,\lambda)+O(\varepsilon) \\ O(\varepsilon)  \\ 
 \psi^+_1(z,\lambda)+O(\varepsilon)  \\ O(\varepsilon)  \\\end{matrix}\right).\ \  
 \]

\medskip
\noindent We need to find another fundamental solution in $S^+_\lambda$.  
\begin{lemma}\label{lem:tilde-Y+} Let $\lambda\in C_{k_0}$. 
For $\varepsilon\approx 0$, \ref{slow eigen system} has a fundamental solution 
\begin{equation*}
\tilde Y^+_\varepsilon (z,\lambda)=\left(\tilde\phi^+_1(z,\lambda), \tilde\phi^+_2(z,\lambda), \tilde\psi^+_1(z,\lambda), \tilde\psi^+_2(z,\lambda),\tau(z)\right)
\end{equation*}
satisfying $\tilde Y^+_\varepsilon (+\infty,\lambda)=Y_{+1}$ and 
\begin{align}\label{eq:tilde-psi2+}
 \tilde\psi^+_2(0,\lambda)-H\left(\tilde\phi_2^+(0,\lambda), 0\right)=O(1).
\end{align}
\end{lemma}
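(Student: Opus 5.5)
The missing decaying solution at $+\infty$ should be a \emph{fast} solution: it leaves the slow manifold $\mathscr M_\varepsilon$ along the strongly stable fast fiber, with rate $-c/\varepsilon$. The plan is to build it from the stable fibration of $\mathscr M_\varepsilon$ together with the local stable manifold of $Y_{+1}$ in the full system.

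First we count dimensions in the full system \ref{slow eigen system} near $Y_{+1}$. By Lemma \ref{number of matrix eigenvalues}, $A_+(\lambda,\varepsilon)$ has two eigenvalues with negative real part. Adding the $\tau$-direction with eigenvalue $-2\kappa$, the stable manifold $\mathscr W^s_{full}(Y_{+1})$ of the augmented equilibrium is 3-dimensional; one of these dimensions is the trivial orbit $\mathscr T$. The two stable eigenvalues of $A_+$ are identified from \eqref{char eqn of A+}. One is the negative root of the $\phi_1$-equation; it is $O(1)$, and it corresponds to the direction already captured by $\mathscr W^s_\varepsilon(Y_{+1})\subset \mathscr M_\varepsilon$, giving $Y^+_\varepsilon$. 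The other is the root of $\varepsilon\mu^2+(c-2\varepsilon\delta)\mu+(\varepsilon\delta^2-c\delta+\ell-\lambda)=0$ with $\mu=-c/\varepsilon+O(1)$. This is the fast stable direction transverse to $\mathscr M_\varepsilon$, normal to it, with $\psi_2$-component dominant. The remaining root, $\approx (\lambda+c\delta-\ell)/c$, has positive real part by \eqref{delta ineq weight}, consistent with Lemma \ref{lemma:fund-sol-on-slow-0}.

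Next we use Fenichel's stable fibration. Since $\mathscr M_\varepsilon$ is normally hyperbolic and attracting in the fast direction, with eigenvalue $-c$ in \eqref{Jacobian at p of fast slow system}, every base point $p\in\mathscr M_\varepsilon$ has a 1-dimensional stable fiber $\mathscr F^s(p)$. Solutions starting on $\mathscr F^s(p)$ approach the solution through $p$ at rate $e^{-c z/\varepsilon}$. Take the base point on the (2-dimensional) $\mathscr W^s_\varepsilon(Y_{+1})$, e.g. $p=Y^+_\varepsilon(0,\lambda)$ or simply the trivial point $(0,0,0,0,0)\in\mathscr T$. Choose $\tilde Y^+_\varepsilon(0)$ on $\mathscr F^s(p)$ at an $O(1)$ distance from $p$ in the $\psi_2$-direction. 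The layer system \ref{fast reduced} is linear in $\psi_2$ with decay $e^{-c\xi}$, so the fiber is globally defined, and because the system is linear we may scale freely. The resulting solution converges to the solution through $p$, hence to $Y_{+1}$ as $z\to+\infty$. Its initial value satisfies $\tilde\psi^+_2(0)-H_\varepsilon(\tilde\phi_1^+,\tilde\phi^+_2,\tilde\psi_1^+,0)=O(1)$ nonzero. Since $H_\varepsilon=H+O(\varepsilon)$, this gives \eqref{eq:tilde-psi2+}. Concretely, for $\varepsilon=0$ the fiber over $p=0$ is $\{\phi_1=\phi_2=\psi_1=0\}$, so we may take $\tilde Y^+(0)=(0,0,0,1,0)$ and perturb.

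Finally, $\tilde Y^+_\varepsilon$ must be independent of $Y^+_\varepsilon$ and lie in $S^+_\lambda$. Independence is clear: $Y^+_\varepsilon$ lies on $\mathscr M_\varepsilon$, so its $\psi_2-H_\varepsilon$ is zero, whereas for $\tilde Y^+_\varepsilon$ it is $O(1)$. Decay follows from the fibration property plus decay of the base solution. The main obstacle is that Fenichel fibration is a nonlinear, local statement, while we need the fiber at an $O(1)$ distance uniformly in $\lambda\in C_{k_0}$ and over the noncompact $z$-range. I would handle this by exploiting linearity: a linear change of variables $\psi_2\mapsto \psi_2-H_\varepsilon$ block-triangularizes the system up to $O(\varepsilon)$ (as in the Riccati/Fenichel construction of $\mathscr M_\varepsilon$). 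A second, exponential-dichotomy transformation then decouples the fast $\psi_2$ equation, whose rate $-c/\varepsilon+O(1)$ dominates. The fiber is then a linear subspace, so any amplitude is allowed. Uniformity near $\tau=\pm1$ comes from the compactified $\tau$ variable and the $C^1$ extension noted after \eqref{y tau system}.
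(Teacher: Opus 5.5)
Your proposal is correct and follows essentially the same route as the paper, which also foliates the stable manifold $\mathscr N^s_{+1,\varepsilon}$ of $Y_{+1}$ by Fenichel fast stable fibers over $\mathscr W^s_\varepsilon(Y_{+1})$, picks an orbit on such a fiber that is $O(1)$ away from $\mathscr M_\varepsilon$ at $z=0$, and then uses $H_\varepsilon=H+O(\varepsilon)$. Your additional points fill in details the paper leaves implicit rather than changing the argument: identifying the fast stable root $\mu\approx -c/\varepsilon$ of $A_+$, the explicit independence from $y_1^+$, and using linearity and block-triangularization to make the fiber global and uniform in $\lambda$.
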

\begin{proof}  To simplify the notations, let's denote $Y_s=(\phi_1,\phi_2,\psi_1)$. 
The orbit $(Y_s(\xi,\lambda),\psi_2(\xi,\lambda),\tau(\xi))$ of \ref{fast reduced} starting at $\left(Y^{(0)}_s, \psi^{(0)}_2,0\right)$
 is given by
\[
\left(Y^{(0)}_s,\psi_2(\xi,\lambda),0\right),\ \ \mbox{where}\ \lim_{\xi\to+\infty}\psi_2(\xi,\lambda)
=H\left(\phi_2^{(0)},0\right).
\]
This implies that the stable manifold  $\mathscr N^s_{+1}$ of $\mathscr W^s(Y_{+1})$ for  \ref{fast reduced} is foliated by the points in $\mathscr W^s(Y_{+1})$, 
\[
\bigcup_{p\in \mathscr W^s(Y_{+1})} \mathscr F^{s}(p),\] 
where  $p=\left(Y^{(0)}_s,H\left(\phi_2^{(0)},0\right),0\right)\ \in \mathscr W^s(Y_{+1})$,  associated with $\mathscr F^{s}(p)$  given by
\[
\mathscr F^{s}(p)=\left \{(Y^{(0)}_s,\psi_2,0),\ \psi_2\in\mathbb R\right \},
\]
as shown in Figure \ref{fig:5}.
 Let $\mathscr N^s_{+1,\varepsilon}$ denote the  {\em local stable manifold} of $Y_{+1}$ for \ref{slow eigen system}. Then $\mathscr N^s_{1,\varepsilon}$  is foliated by  the points in $\mathscr W^s_\varepsilon(Y_{+1})$, 
\[
\bigcup_{p\in \mathscr W^s_\varepsilon(Y_{+1})} \mathscr F^{s}_\varepsilon(p),\] 
as shown in Figure \ref{fig:6}. This implies that  $\mathscr N^s_{+1,\varepsilon}$  has an orbit  which does not belong to  $\mathscr M_\varepsilon$, denoted by 
\[\tilde Y^+_\varepsilon(z,\lambda)=\big(\tilde\phi^+_1(z,\lambda), \tilde\phi^+_2(z,\lambda), 
 \tilde\psi^+_1(z,\lambda), \tilde\psi^+_2(z,\lambda),\tau(z)\big), 
\]
where $\tilde Y^+_\varepsilon(z,\lambda)$ is away form $\mathscr M_\varepsilon$ at some $z_0$.
Without loss of generality, we may assume $z_0=0$, which implies
\[ \tilde\psi^+_2(0,\lambda)-H_\varepsilon\left(\tilde\phi^+_1(0,\lambda),\tilde\phi^+_2(0,\lambda),\tilde\psi^+_1(0,\lambda), 0\right)=O(1).\]
As $H_\varepsilon\left(\phi_1,\phi_2,\psi_1,\tau\right)=H(\phi_2,\tau)+O(\varepsilon)$, we have 
\[ \tilde\psi_2^+(0,\lambda)-H\left(\tilde\phi^+_2(0,\lambda),0\right)=O(1).\]
\end{proof}

Taking  the $(\phi_1,\phi_2,\psi_1,\psi_2)$ component of 
 $\tilde Y^+_\varepsilon$ gives another  fundamental solution in $S_{\lambda }^+$, given by
 \[ y^+_2(z,\lambda)=
 \left(\begin{matrix} \tilde\phi^+_1(z,\lambda) \smallskip\\ \tilde\phi^+_2(z,\lambda)   \smallskip\\ 
 \tilde\psi^+_1(z,\lambda) \smallskip \\ \tilde\psi^+_2(z,\lambda)  \\\end{matrix}\right).\ \  
 \]
 \begin{figure} 
\begin{subfigure} [b]{0.5\textwidth} 
\centering\includegraphics[scale=0.5]{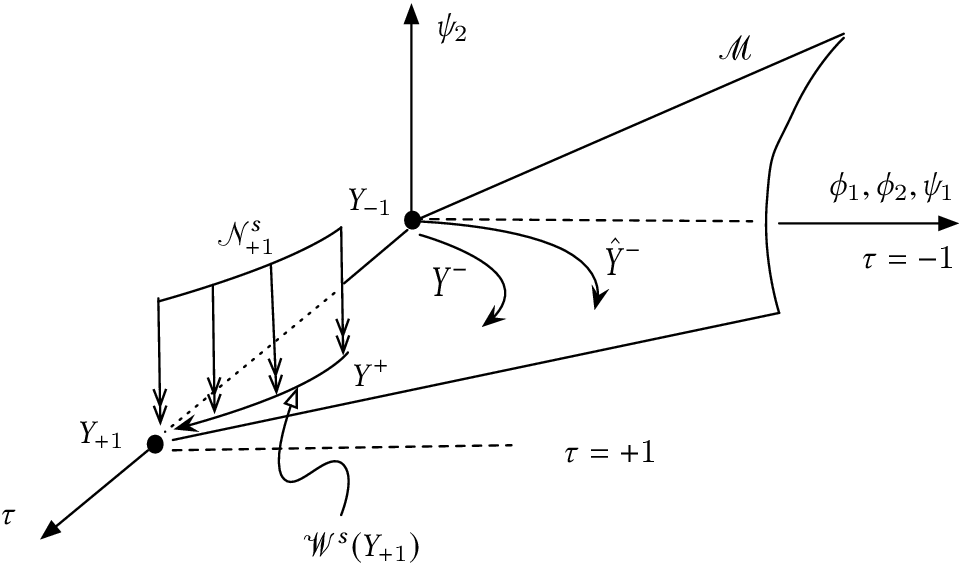} 
\subcaption{$\varepsilon=0$} \label{fig:5} 
\end{subfigure} 
\begin{subfigure} [b]{0.5\textwidth} 
\centering\includegraphics[scale=0.5]{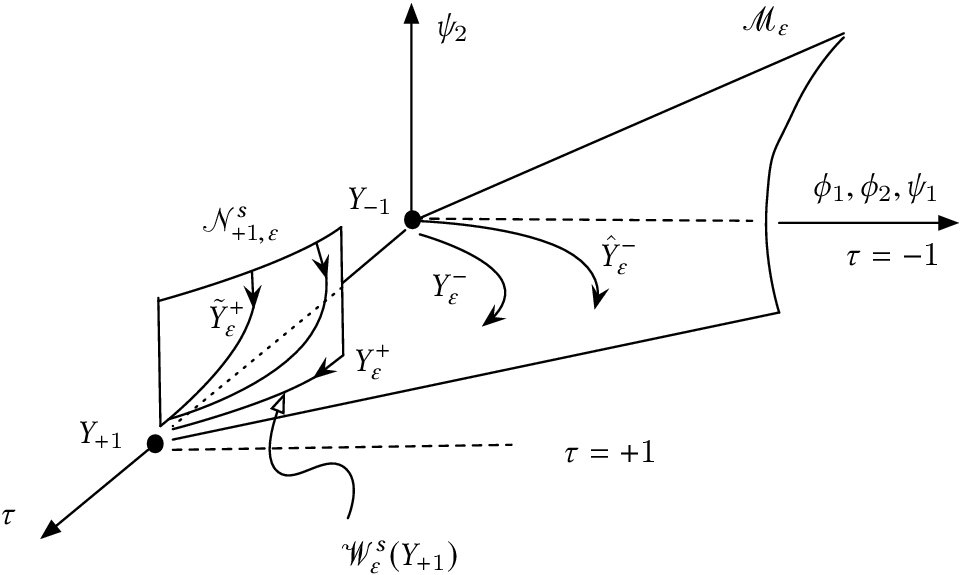} 
\subcaption{$\varepsilon>0$} \label{fig:6} 
\end{subfigure} 
\caption{}
\end{figure} 

\subsection{Stability}
From the above study, for $\varepsilon\approx 0$ the Evans function can be defined as
\begin{align*}
E\left( \lambda \right) & =\det \left(y^-_1(0,\lambda), y^-_2(0,\lambda),y^+_1(0,\lambda),y^+_2(0,\lambda) \right)
=E_0(\lambda)+O(\varepsilon), 
\end{align*}
where
\begin{eqnarray*}
E_{0}\left( \lambda \right)  &=&\det \left( 
\begin{array}{cccc}
\phi_1^-(0,\lambda) & \hat{\phi}_1^-(0,\lambda)  & \phi_1^+(0,\lambda) &  \tilde\phi^+_1(0,\lambda) \\ 
0 & \hat\phi_2^-(0,\lambda) & 0 &  \tilde\phi^+_2(0,\lambda) \\ 
\psi^-_1(0,\lambda) & \hat\psi^-_1(0,\lambda) & \psi_1^+(0,\lambda) &  \tilde\psi^+_1(0,\lambda) \\ 
0 & \hat\psi_2^-(0,\lambda) & 0 &  \tilde\psi^+_2(0,\lambda)%
\end{array}%
\right)  \\
&=&\det \left( 
\begin{array}{cc}
\phi_1^-(0,\lambda) & \phi_1^+(0,\lambda)  \\ 
\psi_1^-(0,\lambda) &\psi_1^+(0,\lambda)
\end{array}%
\right)
\det\left( 
\begin{array}{cc}
\hat\phi^-_2(0,\lambda) & \tilde\phi_2^+(0,\lambda)  \\ 
\hat\psi^-_2(0,\lambda) &\tilde\psi_2^+(0,\lambda)
\end{array}%
\right).
\end{eqnarray*}
Since  $\hat\psi^-_2(0,\lambda)=H\left(\hat\phi^-_2(0,\lambda),0\right)=c^{-1}a_2^0(0,\lambda)\hat\phi_2^-(0,\lambda)$, we have 
\begin{align*}
\det\left( 
\begin{array}{cc}
\hat\phi^-_2(0,\lambda) & \tilde\phi_2^+(0,\lambda)  \\ 
\hat\psi^-_2(0,\lambda) &\tilde\psi_2^+(0,\lambda)
\end{array}
\right)
&=\hat\phi^-_2(0,\lambda)\tilde\psi_2^+(0,\lambda)-c^{-1}a_2^0(0,\lambda)\hat\phi_2^-(0,\lambda)\tilde\phi^+_2(0,\lambda) \  \\
&=\hat\phi^-_2(0,\lambda)\left(\tilde\psi_2^+(0,\lambda)-H\left(\tilde\phi^+_2(0,\lambda),0\right)\right) \\
&=\hat\phi^-_2(0,\lambda) O(1).
\end{align*}
If $\hat\phi^-_2(0,\lambda)=0$, then we have $\hat\psi^-_2(0,\lambda)=H(\hat\phi^-_2(0,\lambda),0)=0$ because of $\hat Y^-(z,\lambda)\in\mathscr M$, which contradicts to that the submanifold $\mathscr M\cap\{\phi_2=\psi_2=0\}$ is invariant and that the $(\phi_2,\psi_2)$ component of $\hat Y^-(z,\lambda)\in\mathscr M$ is nontrivial. Hence we have $\hat\phi^-_2(0,\lambda)\ne 0$. 
This implies that the zeros of $E_{0}\left( \lambda \right) $ coincide with that of $E_1(\lambda)$, defined by
\begin{equation*}
E_1\left( \lambda \right) :=\det \left( 
\begin{array}{cc}
\phi_1^-(0,\lambda) & \phi_1^+(0,\lambda  \\ 
\psi_1^-(0,\lambda) & \psi_1^+(0,\lambda)
\end{array}%
\right). 
\end{equation*}%
In viewing that $\phi_1^-$ and  $\phi_1^+$ are eigenfunctions of  the eigenvalue problem
\begin{equation}\label{L1 phi_1=lambda phi_1}
L_1 \phi_1:=\phi_1^{\prime \prime }+m_1(z) \phi
_1'+q(z) \phi_1=  \lambda \phi_1,
\end{equation}
with
\begin{equation*}
q(z) =r_1(z)+b_{11}^0(z) =g_2(z) -cg_1(z) +1-2U_{0}(z)-rV_{0}(z),
\end{equation*}
which is obtained by letting $\phi_2=0$ in $L_{\omega }\varphi =\lambda\varphi $ with $\varepsilon =0$.
$E_1\left( \lambda \right)$ is exactly the Evans function for  the operator $L_1$.
\begin{lemma}\label{eigenvalue of L1 <=0}
If $\lambda $ is an eigenvalue of $L_1$, then $\lambda\in\mathbb R$ and $\lambda < 0$.
\end{lemma}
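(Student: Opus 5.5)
The plan is to exploit that $L_1$ in \eqref{L1 phi_1=lambda phi_1} is a scalar second-order operator. First I will show that all eigenvalues are real by symmetrization, and then I will rule out $\lambda\ge 0$ by a maximum-principle argument built on an explicit positive supersolution coming from the profile $U_0$.

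\emph{Reality.} Set $\rho(z)=\exp\bigl(\tfrac12\int_0^z m_1(s)\,ds\bigr)$ and $\phi_1=\rho^{-1}w$. A direct computation turns $L_1\phi_1=\lambda\phi_1$ into the Schr\"odinger form
\[ w''+\Bigl(q-\tfrac14 m_1^2-\tfrac12 m_1'\Bigr)w=\lambda w .\]
The asymptotic constants are $m_1(+\infty)=c-2\delta$ and $m_1(-\infty)=c$, and the potential tends to constants at $\pm\infty$. For $\lambda\in\mathbb C_{k_0}$, Lemma \ref{number of matrix eigenvalues} and the computation of $Q_1(\pm1,\lambda)$ in Lemma \ref{lemma:fund-sol-on-slow-0} give exponential dichotomies at both ends. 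I will check that an eigenfunction $\phi_1$ decaying in $C_0$ yields a $w$ that decays exponentially, so $w\in L^2$. To do this, I will compare the decaying roots $\mu,\nu$ of the characteristic equations with $\mp\tfrac12 m_1(\mp\infty)$. Multiplying by $\bar w$ and integrating by parts then gives $\lambda\|w\|^2\in\mathbb R$, hence $\lambda\in\mathbb R$. For real $\lambda$ the eigenfunction can be taken real-valued.

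\emph{Negativity.} The key observation is that $L_1$ is the $\omega$-conjugate, at $\varepsilon=0$, of the scalar operator $\ell_0 f:=f''+cf'+(1-2U_0-rV_0)f$. That is, $L_1[\omega f]=\omega\,\ell_0[f]$ with $\omega=1+e^{\delta z}$, which follows from the definitions of $g_1,g_2,m_1,r_1$. The profile equation \eqref{eq:LV-profile-u} at $\varepsilon=0$ gives $\ell_0[U_0]=-U_0^2$. Hence $W:=\omega U_0>0$ satisfies
\[ L_1W=-U_0W<0 .\]
Suppose $\lambda\ge0$ is an eigenvalue with real eigenfunction $\phi_1$. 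Write $\phi_1=W\chi$. Then $\chi$ satisfies
\[ \chi''+\Bigl(m_1+2\tfrac{W'}{W}\Bigr)\chi'-(U_0+\lambda)\chi=0 ,\]
whose zeroth-order coefficient is strictly negative. If $\chi\to0$ at $\pm\infty$, then $\chi$ (or $-\chi$) attains a positive interior maximum. This contradicts the maximum principle, so $\chi\equiv0$. This rules out every $\lambda\ge0$, and together with reality gives $\lambda<0$.

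\emph{Main obstacle.} The delicate step is justifying $\chi=\phi_1/W\to0$ at $+\infty$; at $-\infty$ it is clear since $W\to u^*_0>0$. As $z\to+\infty$, $U_0\sim Ce^{-\beta z}$ with $\beta=\tfrac12(c-\sqrt{c^2-4})$, possibly with a polynomial factor, or with the fast rate. Hence $W\sim e^{(\delta-\beta)z}$, while $\phi_1$ decays at the rate of the stable root of $\nu^2+(c-2\delta)\nu-[\lambda-(\delta^2-c\delta+1)]=0$. For $\lambda\ge0$ and $\delta$ satisfying \eqref{delta ineq weight}, I expect this root to be more negative than $\delta-\beta$. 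This follows by evaluating the quadratic at $\nu=\delta-\beta$, where it equals $-\lambda\le0$ using $\beta^2-c\beta+1=0$; equality at $\lambda=0$ is the borderline case. The borderline $\lambda=0$, and the possibility that $U_0$ itself decays at the faster rate, are where I expect trouble. If needed I will replace $U_0$ by a slightly modified supersolution $W_\eta=\omega(U_0+\eta e^{-\beta' z})$ with $\beta'$ slightly less than $\beta$, or argue at $\lambda=0$ via the sign of $\int\phi_1 W$ after Liouville symmetrization.
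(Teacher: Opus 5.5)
Your route differs from the paper's, and as written it is incomplete at exactly the step you flag. The paper needs no supersolution. It evaluates the Liouville potential you already wrote down; since $-\tfrac12 m_1'=g_1'=g_1^2-g_2$, all weight terms cancel and
$q-\tfrac14 m_1^2+g_1'=\tfrac{4-c^2}{4}-2U_0-rV_0<0$.
The identity from your own reality step then reads $\lambda\|w\|^2=-\|w'\|^2+\int_{\mathbb R}\bigl(\tfrac{4-c^2}{4}-2U_0-rV_0\bigr)|w|^2\,dz$. This gives $\lambda\in\mathbb R$ and $\lambda<0$ at once, with no information about the tail of $U_0$. Your maximum-principle argument with $W=\omega U_0$, $L_1W=-U_0W$, is sound once $\chi=\phi_1/W\to0$ at $+\infty$, but you have not established that. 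Your proposed repair also fails: for $\gamma<\beta$ one has $\ell_0[e^{-\gamma z}]=(\gamma^2-c\gamma+1-2U_0-rV_0)e^{-\gamma z}>0$ near $+\infty$, so $\omega(U_0+\eta e^{-\gamma z})$ is not a supersolution there.

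You have also misdiagnosed the difficulty: $\lambda=0$ is not borderline. Because $\delta>\beta$ by \eqref{delta ineq weight}, the point $\delta-\beta$, where the quadratic equals $-\lambda$, is the \emph{unstable} root when $\lambda=0$. The stable root is $\nu_-=\delta-\tfrac c2-\sqrt{\tfrac{c^2-4}{4}+\lambda}\le\delta-\bar\beta$, where $\bar\beta:=\tfrac12\bigl(c+\sqrt{c^2-4}\bigr)$. So $\chi\to0$ as soon as $U_0(z)\ge Ce^{-\gamma z}$ for large $z$ and some $\gamma<\bar\beta$. The only genuine danger is that $U_0$ decays at the fast rate $\bar\beta$; then at $\lambda=0$, $\chi$ tends to a possibly nonzero constant and the maximum principle does not close.

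The missing idea is to exclude that fast decay. On $\{w_1=-\gamma u\}$ with $\gamma\in[\beta,\bar\beta]$ one has $(w_1+\gamma u)'=u\,[(c-\gamma)\gamma-1+u+rv]>0$. Hence the orbit coming from $e_4'$ satisfies $U_0'>-\beta U_0$ for all $z$. The paper's fact that $\psi_0$ stays in $U$, where $w_1>-u$, gives the same conclusion, because $1\in(\beta,\bar\beta)$ when $c>2$.

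Alternatively, drop $U_0$ altogether. For $\gamma\in(\beta,\bar\beta)$, $W=\omega e^{-\gamma z}$ satisfies $L_1W/W=\gamma^2-c\gamma+1-2U_0-rV_0<0$ on all of $\mathbb R$, and $\chi=\phi_1/W\to0$ at both ends, so your maximum principle closes. With $\gamma=c/2$ this $\chi$ is, up to a constant, exactly the paper's $\phi_3$.
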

\begin{proof}
Suppose that $\lambda\geq 0$ is an eigenvalue of $L_1$ associated with the eigenfunction $\phi_1$. Let 
\begin{equation*}
\phi_3=\phi_1 e^{\frac{1}{2} \int m_1(z)dz}
\end{equation*}
to transform  \eqref{L1 phi_1=lambda phi_1} into 
\begin{equation}
\phi_3^{\prime \prime }+\left( q(z) -\frac{1}{4}m_1^2(z)+g'_1(z) \right) \phi_3=\lambda \phi_3.
\label{Phi_3 star}
\end{equation}%
As the left-hand side operator for $\phi_3$ is self-adjoint, it follows that if $\lambda$ is an eigenvalue of $L_1$, then $\lambda\in\mathbb R$. Multiplying  both sides of \eqref{Phi_3 star} with $\phi_3$, and  integrating from $-\infty $ to $+\infty $, then using integration by
parts gives
\begin{equation}
\int_{-\infty }^{+\infty }\left( q(z) -\frac1{4}m_1^2(z) +g_1'(z) \right) \left[ \phi_3(z) \right] ^2dz=\lambda\int_{-\infty }^{+\infty }%
\left[ \phi_3(z) \right] ^2 dz+\int_{-\infty}^{+\infty }\left[ \phi_3'(z) \right] ^2dz>0.  \label{Phi_3 integral >0}
\end{equation}
However, using $g'_1=g^2_1-g_2$ to simplifying the left side of \eqref{Phi_3 integral >0} leads to%
\begin{eqnarray*}
q(z) -\frac1{4}m_1^2(z) +g_1'(z) 
= \frac{4-c^2}{4}-2U_{0}(z) -rV_{0}\left(z\right) <0,
\end{eqnarray*}%
because of $c>2$ and both $U_{0}(z) $ and $V_{0}(z) >0$,
for all $z\in \mathbb{R}$. This contradicts to \eqref{Phi_3 integral >0}. Hence $0>\lambda\in\mathbb R$.
\end{proof}

\begin{proposition}
All zeros of $E_1(\lambda)$ are real and negative.  
\end{proposition}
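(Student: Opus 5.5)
The plan is to identify the zeros of $E_1(\lambda)$ on $\mathbb{C}_{k_0}$ with the eigenvalues of $L_1$ acting in $C_0$, and then apply Lemma \ref{eigenvalue of L1 <=0}.

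First, fix $\lambda\in\mathbb{C}_{k_0}$ with $E_1(\lambda)=0$. The columns $(\phi_1^-,\psi_1^-)(0,\lambda)$ and $(\phi_1^+,\psi_1^+)(0,\lambda)$ are then linearly dependent. By Lemma \ref{lemma:fund-sol-on-slow-0}, $(\phi_1^-,\psi_1^-)$ is the (one-dimensional) solution of the reduced system \eqref{eq:slow-reduced-1} on $\mathscr M_1$ decaying as $z\to-\infty$. Likewise $(\phi_1^+,\psi_1^+)$ is the solution decaying as $z\to+\infty$; these are nontrivial because $Q_1(\mp1,\lambda)$ each have exactly one eigenvalue of the appropriate sign of real part. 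Since \eqref{eq:slow-reduced-1} is just the first-order form of $L_1\phi_1=\lambda\phi_1$ in \eqref{L1 phi_1=lambda phi_1}, uniqueness for linear ODEs makes $\phi_1^-$ a constant multiple of $\phi_1^+$ on all of $\mathbb R$. Hence $\phi_1:=\phi_1^-$ decays at both ends.

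Next, upgrade this decay to membership in $C_0$ (in fact $H^1$ with exponential decay), so that $\lambda$ is a genuine eigenvalue of $L_1$. At $z=-\infty$ the decay rate is the real part of the positive root of $\mu^2+c\mu-(\lambda+u^*_0)=0$, which is positive on $\mathbb C_{k_0}$. At $z=+\infty$ it is the negative root of $\nu^2+(c-2\delta)\nu-[\lambda-(\delta^2-c\delta+1)]=0$, whose real part is negative by \eqref{delta ineq weight}. Standard asymptotic (Levinson/exponential dichotomy) estimates, using that the coefficients converge exponentially fast, give exponential decay of $\phi_1$ and $\phi_1'$.

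The main obstacle is that the integration-by-parts argument in Lemma \ref{eigenvalue of L1 <=0} is applied to $\phi_3=\phi_1e^{\frac12\int m_1}$, not to $\phi_1$, and $e^{\frac12\int m_1}$ grows like $e^{cz/2}$ at $-\infty$ and like $e^{(c/2-\delta)z}$ at $+\infty$. We must check that $\phi_3\in H^1(\mathbb R)$. At $-\infty$, $\phi_3$ behaves like $e^{(\mu^++c/2)z}$ with $\Re(\mu^++c/2)=\frac12\Re\sqrt{c^2+4(\lambda+u_0^*)}>0$, taking the principal branch. At $+\infty$, $\phi_3\sim e^{(\nu^-+c/2-\delta)z}$ with real part $-\frac12\Re\sqrt{(c-2\delta)^2+4(\lambda-\delta^2+c\delta-1)}<0$; this square root has nonzero real part for $\lambda$ off the Fredholm border, which holds on $\mathbb C_{k_0}$. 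So $\phi_3$ decays exponentially at both ends, and the conjugated operator in \eqref{Phi_3 star} is a self-adjoint Schrödinger operator. Multiplying by $\bar\phi_3$ forces $\lambda\in\mathbb R$. Lemma \ref{eigenvalue of L1 <=0} then yields $\lambda<0$, and since the potential is bounded above by $(4-c^2)/4<0$, in fact $\lambda\le(4-c^2)/4<0$.

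Finally, to make the statement about zeros meaningful, note that $E_1$ is analytic on $\mathbb C_{k_0}$ as the Evans function of $L_1$, so its zeros are isolated with finite multiplicity. By the above, every zero is a real negative eigenvalue of $L_1$, which proves the proposition.
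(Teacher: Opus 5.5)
Your proposal is correct and takes the same approach as the paper. You identify zeros of $E_1$ with eigenvalues of $L_1$, which the paper simply asserts by calling $E_1$ the Evans function of $L_1$, and then apply Lemma~\ref{eigenvalue of L1 <=0}; your decay check for $\phi_3=\phi_1e^{\frac12\int m_1}$ and the pairing with $\bar\phi_3$ add rigor that the paper's integration-by-parts argument omits.

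One small correction concerns the $+\infty$ step. There, $\Re\sqrt{c^2-4+4\lambda}>0$ on $\mathbb C_{k_0}$ follows from $-k_0>\delta^2-c\delta+1\ge 1-c^2/4$, not merely from $\lambda$ being off the Fredholm border, since real $\lambda<1-c^2/4$ are also off the border. That same inequality, together with your bound $\lambda\le(4-c^2)/4$, shows that $E_1$ in fact has no zeros in $\mathbb C_{k_0}$ at all.
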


\medskip
\noindent
Concluding the above arguments, we obtain the stability results for  $\left( e_{4},e_1\right) $-waves. 
\begin{theorem}\label{thm:stability} 
Assume $r<1$ and $s=O\left( \varepsilon \right) $ in \eqref{eq:LV}. For the $\left( e_{4},e_1\right) $-waves  obtained in Theorem \ref{thm:exist},
\begin{enumerate}
\item if $\ell >2$ and $4<c^2\leq \frac{\ell ^2}{\ell -1}$, then the $\left( e_{4},e_1\right) $-wave is unstable in the space $C_{\omega }$ for all $\delta>0$;
\item if $\ell \leq 2$ and $c>2$; or $\ell >2$ and $\frac{\ell ^2}{\ell -1}<c^2$, 
  then the $\left( e_{4},e_1\right) $-wave is asymptotically stable with no shift in the space $C_{\omega }$ for all $\delta$  satisfying \eqref{delta ineq weight}. 
 \end{enumerate}
\end{theorem}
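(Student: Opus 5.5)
The proof will assemble the spectral information already obtained and feed it into the standard linear-to-nonlinear stability principle for $C_\omega$ (Volpert et al. \cite{Vvv}, Kapitula and Promislow \cite{KapitulaPro}).

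For part (i), assume $\ell>2$ and $4<c^2\le \ell^2/(\ell-1)$. Proposition \ref{essential spectrum weighted}(1) gives $\sup\Re\sigma_e(L_\omega)>0$ for every $\delta>0$. The spectrum of $\hat L_\omega$ on $C_\omega$ coincides with that of $L_\omega$ on $C_0$, so the linearization in $C_\omega$ has spectrum in the open right half plane. The instability principle of \cite{Vvv} (see also \cite{Henry}) then yields nonlinear instability of the wave in $C_\omega$. Nothing further is needed here.

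For part (ii), assume $\ell\le2$, or $\ell>2$ with $c^2>\ell^2/(\ell-1)$, and fix $\delta$ satisfying \eqref{delta ineq weight}.

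1. Proposition \ref{essential spectrum weighted}(2) puts $\sigma_e(L_\omega)$ in $\{\Re\lambda<-k_0\}$.
2. It then suffices to show that $\sigma_p(L_\omega)\cap\{\Re\lambda\ge0\}=\emptyset$ and that $0\notin\sigma_p(L_\omega)$. This gives the "without shift" conclusion, i.e.\ no translation eigenvalue in the weighted space. That is consistent, because $(U_\varepsilon',V_\varepsilon')$ decays only like $e^{-\ell z/c}$ or slower, which is not enough to lie in $C_\omega$ once $\delta>\delta_1\approx \ell/c$.
3. In $\mathbb C_{k_0}$ the point spectrum equals the zero set of $E(\lambda)$, counting multiplicity. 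We have $E(\lambda)=E_0(\lambda)+O(\varepsilon)$ with $E_0(\lambda)=E_1(\lambda)\,\hat\phi_2^-(0,\lambda)\,\bigl(\tilde\psi_2^+(0,\lambda)-H(\tilde\phi_2^+(0,\lambda),0)\bigr)$, where the last two factors are nonvanishing.
4. By the Proposition preceding the theorem, which rests on Lemma \ref{eigenvalue of L1 <=0}, every zero of $E_1$, and hence of $E_0$, in $\mathbb C_{k_0}$ is real and negative.
5. Transfer this to $E$ via Rouché's theorem. On the compact set $K=\{\Re\lambda\ge0,\ |\lambda|\le R\}$, $E_0$ has no zeros, so $|E_0|\ge m>0$ on $K$. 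For $\varepsilon$ small the $O(\varepsilon)$ error is less than $m$, and therefore $E\ne0$ on $K$.

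The main obstacle is excluding eigenvalues with $|\lambda|>R$ uniformly in $\varepsilon$, since Rouché only controls a compact set. I would handle this with an a priori energy estimate on \eqref{eq:eigen-problem-explicit}. Take the inner product with $\bar\phi_1$ and $\bar\phi_2$ (after the exponential transformation that symmetrizes the first-order terms), and bound $\Re\lambda+|\Im\lambda|/C$ by a constant independent of $\varepsilon$. This would use boundedness of $B_\varepsilon$, $R_\varepsilon$, $M_\varepsilon$ and the fact that $\varepsilon\phi_2''$ contributes a nonpositive term.

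A secondary subtlety is that the $O(\varepsilon)$ remainder in $E=E_0+O(\varepsilon)$ must be uniform for $\lambda$ in compact subsets of $\mathbb C_{k_0}$. I would take this uniformity from the smooth dependence on $\lambda$ of the Fenichel manifolds in Lemmas \ref{lemma:fund-solu-Y-on-slow} and \ref{lem:tilde-Y+}.

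With $\sigma(L_\omega)\subset\{\Re\lambda<-\kappa_0\}$ for some $\kappa_0>0$, the exponential stability theorem in \cite{Vvv} gives asymptotic stability with no shift in $C_\omega$.
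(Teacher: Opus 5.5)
Your argument follows the paper's route: Proposition~\ref{essential spectrum weighted} for $\sigma_e$, the factorization $E_0=E_1\cdot\hat\phi_2^-(0,\lambda)\cdot O(1)$, and Lemma~\ref{eigenvalue of L1 <=0} for the zeros of $E_1$. The paper stops there with ``concluding the above arguments'' and never addresses uniformity in $\varepsilon$ or large $|\lambda|$. You are right to flag this, but the repair you propose for it fails.

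\textbf{Why the energy estimate does not work.} No integration-by-parts estimate with bounded weights can give an $\varepsilon$-uniform bound $\Re\lambda+|\Im\lambda|/C\le C'$. Such estimates apply equally to approximate eigenfunctions. The $v$-branch of $S_+\subset\sigma_e(L_\omega)$, namely $\lambda=\varepsilon\delta^2-c\delta+\ell-\varepsilon\tau^2+i\tau(c-2\varepsilon\delta)$, makes $\Re\lambda+|\Im\lambda|/C$ of order $\varepsilon^{-1}$ at $|\tau|\approx c/(2C\varepsilon)$.

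Concretely, the two versions of your estimate each break down:
\begin{itemize}
\item Without conjugating $\phi_2$, the term $\Im\int m_2^\varepsilon\phi_2'\bar\phi_2$ is controlled only through $\varepsilon\|\phi_2'\|^2$. This gives only $|\Im\lambda|=O(\varepsilon^{-1/2})$ on $\{\Re\lambda\ge0\}$. That region lies outside the $\varepsilon$-independent compact sets on which $E=E_0+O(\varepsilon)$ is justified, so your compact-set step cannot absorb it.
\item Symmetrizing both first-order terms forces different conjugations, $e^{\frac12\int m_1}$ for $\phi_1$ and $e^{\frac1{2\varepsilon}\int m_2^\varepsilon}$ for $\phi_2$. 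The coupling coefficients then acquire the factors $e^{\pm(\frac12\int m_1-\frac1{2\varepsilon}\int m_2^\varepsilon)}$. These blow up at rate $\approx\frac c2(\varepsilon^{-1}-1)$ as $z\to-\infty$ for $b_{12}^\varepsilon\to-ru^*_\varepsilon$, and as $z\to+\infty$ for $b_{21}^\varepsilon$. Boundedness of $B_\varepsilon$ is then of no use.
\end{itemize}

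\textbf{What does work.} You do not need a sector, only the absence of eigenvalues in $\{\Re\lambda\ge-\kappa'\}$. The missing ingredient is that $b_{21}^\varepsilon=-s\ell V_\varepsilon=O(\varepsilon)$, so $L_\omega$ is an $O(\varepsilon)$ perturbation of a triangular operator. Eigenfunctions with $\Re\lambda>-k_0$ decay exponentially, so you may work in $L^2$. Write $T_1=\partial_z^2+m_1\partial_z+r_1+b_{11}^\varepsilon$ and $T_2=\varepsilon\partial_z^2+m_2^\varepsilon\partial_z+r_2^\varepsilon+b_{22}^\varepsilon$.

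\emph{The operator $T_1$.} The computation in Lemma~\ref{eigenvalue of L1 <=0} goes through verbatim with $(U_\varepsilon,V_\varepsilon)$, so $\sigma_p(T_1)\subset(-\infty,(4-c^2)/4)$. Its essential spectrum lies in $\{\Re\lambda\le\max(\delta^2-c\delta+1,-u^*_\varepsilon)\}$. The diffusion coefficient is $1$, so the standard $O(|\lambda|^{-1})$ resolvent bound holds. Hence $\|(T_1-\lambda)^{-1}\|\le C_1$ on $\{\Re\lambda\ge-\kappa'\}$, uniformly in small $\varepsilon$, because the coefficients depend on $\varepsilon$ only through $(U_\varepsilon,V_\varepsilon)=(U_0,V_0)+O(\varepsilon)$.

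\emph{The operator $T_2$.} Test against $e^{\beta}\bar\phi_2$, where $\beta$ is bounded and $\varepsilon$-independent, and $\beta'\ge0$ satisfies:
\begin{itemize}
\item $\beta'$ is supported in a fixed interval $[-Z,Z]$ outside of which $r_2^\varepsilon+b_{22}^\varepsilon\le-\kappa$. This is possible because the limits $-\ell v^*_\varepsilon$ and $\varepsilon\delta^2-c\delta+\ell$ are negative.
\item $r_2^\varepsilon+b_{22}^\varepsilon-\tfrac12 m_2^\varepsilon\beta'\le-\kappa$ everywhere.
\end{itemize}
The $\varepsilon\phi_2''$ term then contributes only $-\varepsilon\int e^\beta|\phi_2'|^2+O(\varepsilon)\int e^\beta|\phi_2|^2$. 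This yields $\|(T_2-\lambda)^{-1}\|\le C_2$ uniformly in $\varepsilon$ and in $\Im\lambda$ for $\Re\lambda\ge-\kappa/2$.

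\emph{Conclusion.} An eigenfunction then satisfies $\|\phi_2\|\le C_1C_2\|b_{12}^\varepsilon\|_\infty\|b_{21}^\varepsilon\|_\infty\|\phi_2\|=O(\varepsilon)\|\phi_2\|$. Hence $\phi_2=0$, and then $\phi_1=0$. This removes the whole half-plane at once, so neither the large-$|\lambda|$ step nor the compact-set/Rouch\'e step is needed.
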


Finally, we provide some discussion on future work. When $d=O(1)$ or $s=O(1)$ in \eqref{eq:LV-0}, analyzing the stability and monotonicity of $(e_4,e_1)$-waves becomes a challenging problem, which needs new ideas and techniques.

\medskip
\medskip


\noindent{\bf Funding}
C.H. Chang is supported by the National Science and Technology Council, Taiwan (Grant numbers 114-2115-M-194-002), T. S. Yang is supported by the National Science and Technology Council, Taiwan (Grant numbers 113-2115-M-029-001).

\end{document}